\newcommand{%
    \def\svgwidth{\columnwidth}
    \import{./}{.pdf_tex}
}[1]{%
    \def\svgwidth{\columnwidth}
    \import{./}{#1.pdf_tex}
}
\numberwithin{equation}{section}
\theoremstyle{plain}
\newtheorem{theorem}{Theorem}[section]
\theoremstyle{theorem}
\newtheorem{prop}[theorem]{Proposition}
\newtheorem{lem}[theorem]{Lemma}
\newtheorem{cor}[theorem]{Corollary}
\newtheorem*{question*}{Question}
\theoremstyle{definition}
\newtheorem{defn}[theorem]{Definition}
\newtheorem*{remark}{Remark}
\newcommand{\C}{\mathbb{C}}
\newcommand{\Z}{\mathbb{Z}}
\newcommand{\N}{\mathbb{N}}
\DeclareMathOperator{\Rat}{Rat}
\DeclareMathOperator{\PSL}{PSL}
\numberwithin{figure}{section}
\title{On hyperbolic rational maps with finitely connected Fatou sets}
\author{Yusheng Luo}
\address{Dept. of Mathematics \& University of Michigan, Ann Arbor, MI 48109 USA}
\email{yusheng.s.luo@gmail.com}
\date{\today}
\begin{document}

\begin{abstract}
In this paper, we study hyperbolic rational maps with finitely connected Fatou sets.
We construct models of {\em post-critically finite hyperbolic tree mapping schemes} for such maps, generalizing post-critically finite rational maps in the case of connected Julia set.
We show they are general limits of rational maps as we quasiconformally stretch the dynamics.
Conversely, we use quasiconformal surgery to show any post-critically finite hyperbolic tree mapping scheme arises as such a limit.

We construct abundant examples thanks to the flexibilities of the models, and use them to construct a sequence of rational maps of a fixed degree with infinitely many non-monomial rescaling limits.
\end{abstract}

\maketitle

\setcounter{tocdepth}{1}
\tableofcontents

\section{Introduction}
Let $f: \hat\C \longrightarrow \hat\C$ be a rational map of degree $d\geq 2$. It is said to be {\em hyperbolic} if all the critical points converge to attracting periodic cycles under iteration.
Hyperbolic maps form an open and conjecturally dense subset in the moduli space of all rational maps of degree $d$, and a connected component $\mathcal{H}$ is called a {\em hyperbolic component}.

When the Julia set is connected, hyperbolic components are classified by {\em post-critically finite} hyperbolic rational maps.
That is, given such a hyperbolic component $\mathcal{H}$, there exists a unique rational map $f_0 \in \mathcal{H}$, regarded as the {\em center} of $\mathcal{H}$, with finite critical orbits.
The map $f_0$ gives a model for the dynamics of hyperbolic rational maps $f\in \mathcal{H}$ in the sense that the dynamics of $f$ on its Julia set is quasiconformally conjugate to that of $f_0$, and it plays a crucial role in understanding the organization and topology of the corresponding hyperbolic component (\cite{McM88} and \cite{Milnor12}).

The situation is different for hyperbolic rational maps with disconnected Julia sets as such a post-critically finite model does not exist.
The aim of the paper is to propose an appropriate model for the class of hyperbolic rational maps such that every periodic Fatou component is a disk (equivalently, every Fatou component is finitely connected by Riemann-Hurwitz formula).

The object we consider in this general setting is a {\em tree mapping scheme} $(F, R)$, which combines discrete data of a {\em tree map} $F$ with analytic information of a {\em rational mapping scheme} $R$.
Roughly speaking, a tree mapping scheme is a mapping to a finite tree of Riemann spheres, from a sub-forest of such a tree, where the action on each sphere is a rational map.

Given a hyperbolic rational map $f \in \mathcal{H}$ with finitely connected Fatou set, we construct a {\em quasiconformal stretch} $f_n \in \mathcal{H}$ of $f$ and show that $f_n$ converges to an irreducible {\em post-critically finite hyperbolic}  tree mapping scheme $(F, R)$ (Theorem \ref{thm:cvg}).
The main tool for the construction of the tree map $F$ is a construction due to Shishikura in the late 80’s often called the Shishikura tree \cite{Shishikura89}, and the rational mapping scheme $R$ on the union of Riemann spheres is obtained by taking appropriate {\em rescaling limits} of $f_n$.

Our approach realizes tree mapping schemes as general limits of rational maps.
This is illustrative of the idea that the post-critically finite hyperbolic tree mapping scheme is, in a certain sense, a degenerate post-critically finite rational map, and it is interpreted as the {\em center at infinity} for the hyperbolic component $\mathcal{H}$ when the Julia set is disconnected.

Conversely, using a standard quasiconformal surgery, we show that any irreducible post-critically finite hyperbolic tree mapping scheme arises as such a limit (Theorem \ref{thm:classification}).

As an application, we use the tree mapping schemes to construct sequences with infinitely many non-monomial periodic rescaling limits, answering in the negative a question posed in \cite{Kiwi15} (see Theorem \ref{thm:infiniterescalinglimit}).

We now turn to a detailed statement of results.

\subsection*{Tree mapping schemes}
A {\em tree of Riemann spheres} $(\mathcal{T}_1, \hat\C^{\mathcal{V}_1})$ is a finite tree $\mathcal{T}_1$ with the vertex set $\mathcal{V}_1$, a disjoint union of Riemann spheres $\hat \C^{\mathcal{V}_1} := \bigcup_{a\in {\mathcal{V}_1}}\hat \C_{a}$, together with markings $\xi_a: T_a\mathcal{T}_1 \xhookrightarrow{} \hat\C_a$ for $a\in \mathcal{V}_1$.

Let $\mathcal{T}_0 \subseteq \mathcal{T}_1$ be a finite union of subtrees of $\mathcal{T}_1$ with vertices $\mathcal{V}_0 \subseteq \mathcal{V}_1$.
A {\em tree mapping scheme} $(F, R)$ from $(\mathcal{T}_0, \hat\C^{\mathcal{V}_0})$ to $(\mathcal{T}_1, \hat\C^{\mathcal{V}_1})$ is a {\em tree map}
$$
F: (\mathcal{T}_0, \mathcal{V}_0) \longrightarrow (\mathcal{T}_1, \mathcal{V}_1)
$$
which is injective on each edge and a {\em rational mapping scheme}
$$
R:= \bigcup_{a\in \mathcal{V}_0} R_a: \hat\C^{\mathcal{V}_0} \longrightarrow \hat\C^{\mathcal{V}_1}
$$ with
\begin{itemize}
\item $R_a= R_{a\to F(a)}: \hat\C_a \longrightarrow \hat \C_{F(a)}$ is a non-constant rational map; and
\item $R_a \circ \xi_a = \xi_{F(a)} \circ DF_a$,
\end{itemize}
where $DF_a: T_a\mathcal{T}_0 \longrightarrow T_{F(a)}\mathcal{T}_1$ is the tangent map of $F$ at $a$.
The set of {\em marked points} is denoted by $\Xi_a := \xi_a(T_a\mathcal{T}_1) \subseteq \hat\C_a$.

A sequence $f_n$ of rational maps is said to {\em converge} to $(F, R)$ if there exist {\em rescalings} $M_{a,n} \in \PSL_2(\C)$ {\em representing} $a \in \mathcal{V}_1$ such that 
$$
M_{F(a),n}^{-1} \circ f_n \circ M_{a,n}(z) \to R_a(z)
$$ 
compactly on $\hat \C_a- \Xi_a$ for all $a\in \mathcal{V}_0$ (see Definition \ref{defn:ctm}, cf. \S 4 in \cite{Arfeux17}).
This means that the rational maps $R_a$ are {\em rescaling limits} of $f_n$.

Let $f \in \mathcal{H}$ be a hyperbolic rational map with finitely connected Fatou set, i.e., each Fatou component of $f$ has finitely many boundary components. 
We construct a sequence of rational maps $f_n \in \mathcal{H}$ by quasiconformally stretching all Fatou components simultaneously.
We call such a sequence $f_n$ a {\em quasiconformal stretch} or simply a {\em stretch} for short (see \S \ref{sec:tms} for details).

When the Julia set is connected, this sequence $f_n$ converges to the corresponding post-critically finite rational map $f_0 \in \mathcal{H}$, which can be regarded as the tree mapping scheme $(F: \{a\} \longrightarrow\{a\}, R = f_0)$.
When the Julia set is disconnected, $f_n$ diverges in $\mathcal{M}_d = \Rat_d/\PSL_2(\C)$.
We show
\begin{theorem}\label{thm:cvg}
	Let $f \in \mathcal{H}$ be a hyperbolic rational map with finitely connected Fatou set. Then the sequence of stretch $f_n$ converges to an irreducible post-critically finite hyperbolic tree mapping scheme $(F, R)$.
\end{theorem}

\begin{remark}
	One of the main contributions of this paper is to give a good and explicit definition of {\em post-critically finite hyperbolic} tree mapping schemes, which is discussed in detail in \S \ref{sec:treemappingschemes} (see Definition \ref{defn:hpcf}).	
\end{remark}

\begin{figure}[ht]
    \centering
    \includegraphics[width=.7\linewidth]{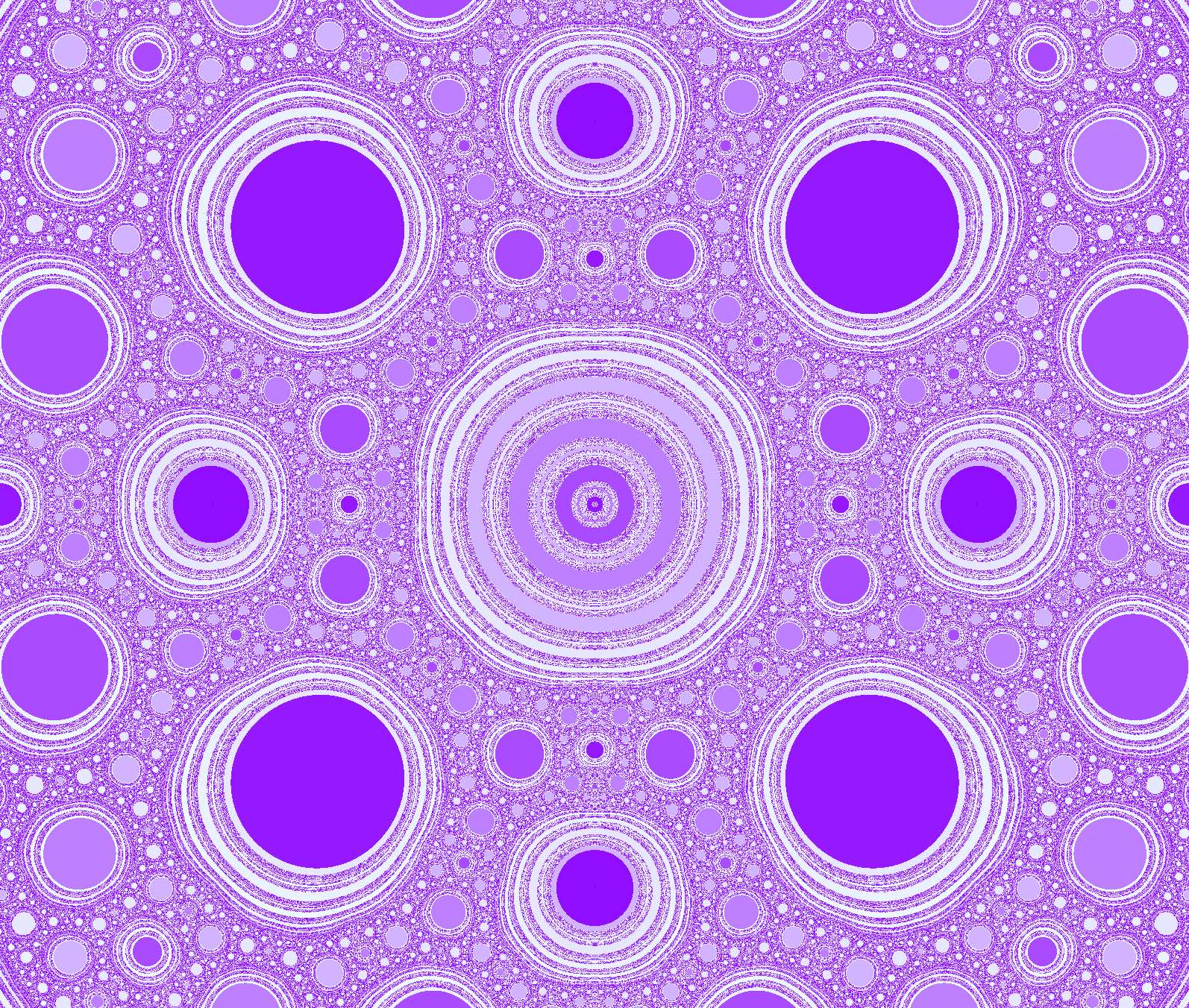}
    \caption{The Julia set of a hyperbolic rational map with finitely connected Fatou set. It contains a fixed Sierpinski carpet as a buried component. The algebraic formula is $f_n(z) = \frac{z^2}{1-\frac{1}{16}z^4}+\frac{1}{nz^4}$ for sufficiently large $n$. The tree mapping scheme is described in Figure \ref{fig:BuriedSierpinskiTreeMap}.}
    \label{fig:BuriedSierpinski}
\end{figure}

Using a standard quasiconformal surgery, we show every irreducible post-critically finite hyperbolic tree mapping scheme arises as such a limit:

\begin{theorem}\label{thm:classification}
Let $(F, R)$ be an irreducible post-critically finite hyperbolic tree mapping scheme. 
Then there exists a hyperbolic rational map $f$ whose quasiconformal stretch $f_n$ converges to $(F,R)$.

Moreover, if $g$ is another hyperbolic rational map corresponding to $(F, R)$, then $g$ is J-conjugate to $f$: there exists a quasiconformal map $\psi: \hat\C \longrightarrow\hat\C$ conjugating the dynamics of $f$ and $g$ on the Julia sets.
\end{theorem}

\begin{remark}
	When the Julia set is connected, $f$ and $g$ are J-conjugate if and only if $f$ and $g$ are in the same hyperbolic component.
	However, when the Julia set of $f$ is disconnected, the space of rational maps that are J-conjugate to $f$ can be disconnected (see Figure \ref{fig:JConjugate}).
	In some loose sense, the closure of such hyperbolic components `intersect' at infinity.
\end{remark}

\subsection*{Discussion on related work}
Our model is motivated by the McMullen family $f_n(z)=z^2+\frac{1}{nz^3}$.
In \cite{McM88}, McMullen showed that for sufficiently large $n$, the Julia set of $f_n$ is homeomorphic to a Cantor set of circles, and thus each Fatou component is either simply, or doubly connected.
It is already observed in \cite{McM88} that for large $n$, the dynamics on the Julia components are recorded by a `tent map' on an interval, which gives our tree map (see Figure \ref{fig:CantorCircleExample}).
Rescaling limits also appear explicitly in this example by choosing different scales at $0$ (see the discussion in \S \ref{sec:eg}).
This construction is generalized as {\em singular perturbation} or using {\em quasiconformal surgery}, and is studied broadly in the literature (see \cite{PT00, DM07, QYY15, WY17, WY20}).
They provide different perspectives of the theory: the singular perturbation is an algebraic construction that is related to rescaling limits; while quasiconformal surgery is analytic in nature. Our approach, in some sense, relates the two methods and gives an explicit characterization for all such maps.

The tree mapping scheme is an analogue of the Deligne-Mumford compactification in algebraic geometry \cite{DM69, HK14}.
The concept has appeared implicitly in Shishikura's work on Herman rings \cite{Shishikura87} and \cite{Shishikura89}.
Using quasiconformal surgery, Shishikura converts each Herman ring into two Siegel disks and uses it prove the famous Fatou-Shishikura inequality \cite{Shishikura87}.
In order for the construction to work, it is important to leave the realm of rational maps and consider maps on a finite union of Riemann spheres.
The configurations of these Riemann spheres give a tree like structure \cite{Shishikura89}.
This generalized map can also be constructed as a limit of some degenerating rational maps, similar as in this paper.
A similar notion of {\em dynamics on tree of spheres} is developed and studied by Arfeux in \cite{Arfeux16, Arfeux17}.

From a topological perspective,
the realization of tree mapping schemes is related to the Thurston's topological characterization of rational maps.
For a general {\em sub-hyperbolic semi-rational map} $f$, Cui and Tan \cite{CT11} constructed canonical decomposition and show that $f$ is equivalent to a rational map if and only if there are no Thurston's obstruction.
The dual of the canonical decomposition gives a natural tree map.
Recently, using this theory,  Cui and Peng \cite{CP19} classify the Thurston's obstruction for the tree map, and show a sub-hyperbolic semi-rational map is equivalent to a rational map if and only if there are no Thurston obstructions for the tree map and the local systems.
Thus, the first part of Theorem \ref{thm:classification} can be proved by applying the Thurston's theory in \cite{CP19}, and showing the stretch converges to the tree mapping scheme.
We choose to use quasiconformal surgery as it provides a concrete quasirational model and relates the tree mapping scheme with rescaling limits.
We remark that the realization result in \cite{CP19} works in a more general setting, but it is unclear how to find a natural model for a general hyperbolic component.
We also remark that the tree map there is different from the tree map considered in this paper.

The classification of the dynamics on a periodic Fatou component with infinitely many boundary components is related to the study of polynomials with disconnected Julia sets.
The classification problem for such polynomials has been studied extensively in a series of paper by DeMarco and Pilgrim \cite{DP11a, DP11b, DP17}.
An invariant for a general hyperbolic rational map might combine tree mapping schemes with these invariants.

The automorphism group of a post-critically finite rational map plays a crucial part in understanding the topology of the corresponding hyperbolic component (see \cite{McM88} and \cite{Milnor12}).
Recently, Wang and Yin studied the topology of the hyperbolic component for the McMullen family \cite{WY17}.
The post-critically finite hyperbolic tree mapping schemes generalize the roles of post-critically finite hyperbolic rational maps.
In a subsequence work \cite{L20},  we will use tree mapping schemes to understand the structure and organization of the corresponding hyperbolic components.

\begin{figure}[ht]
    \centering
    \resizebox{0.65\linewidth}{!}{
    \def\svgwidth{\columnwidth}
    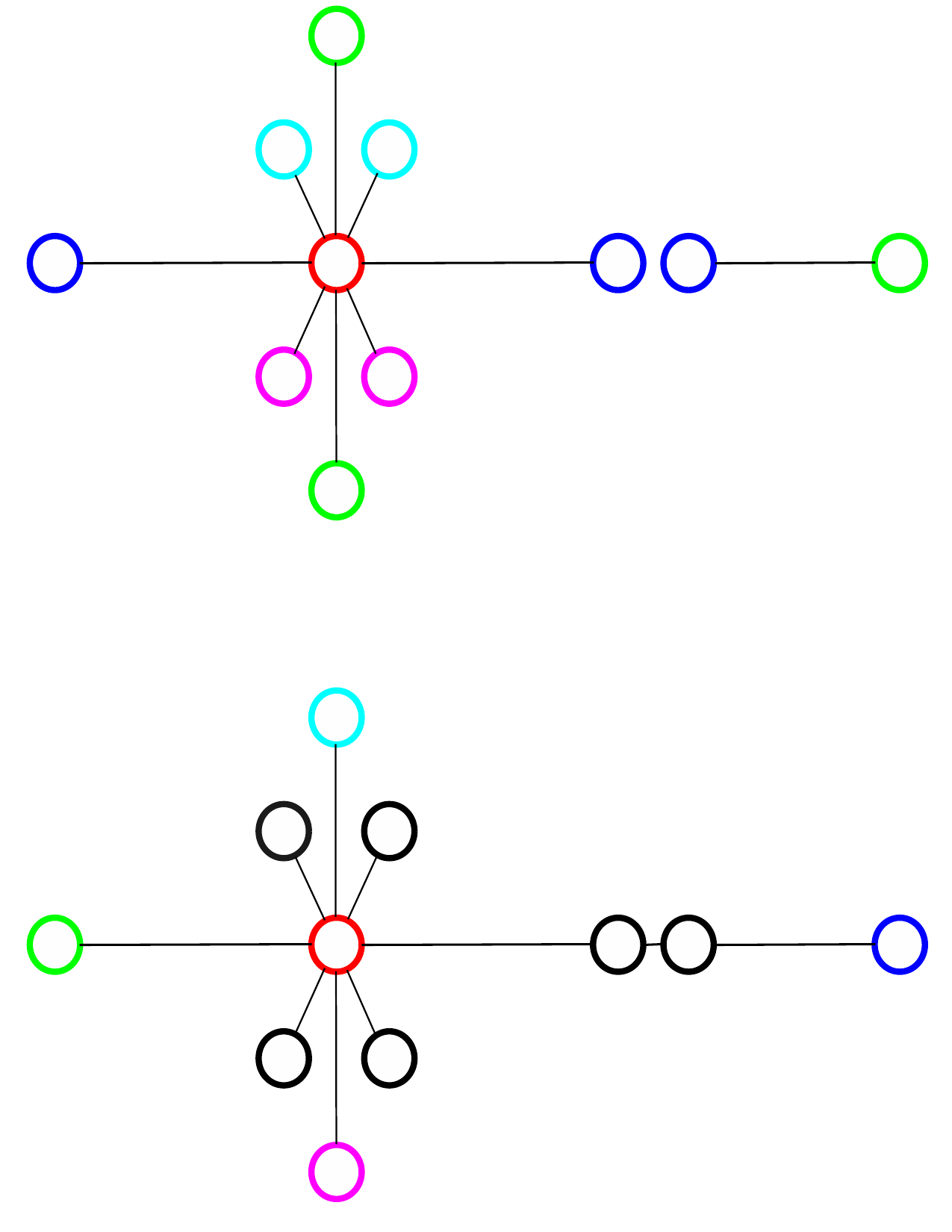

    }
    \caption{The tree mapping scheme associated to Figure \ref{fig:BuriedSierpinski}.
    The length of edges and the markings (except for the vertex $A$) are labeled in the figure. 
    The markings on $\hat\C_A$ correspond to $2\sqrt[4]{-1}$ and $2\sqrt{-1}$.
    The dynamics are recorded by colors and the rational mapping scheme is given as $R_{A\to A}(z) = \frac{z^2}{1-\frac{1}{16}z^4}$, $R_{X\to Y}(z) = R_{B\to Y}(z) = R_{C_i\to D_j}(z) = z^2$, $R_{D_j\to X}(z) = z$ and $R_{B'\to Y}(z) = R_{Y\to X}(z) = z^4$.}
    \label{fig:BuriedSierpinskiTreeMap}
\end{figure}

\subsection*{Application: cycles of rescaling limits.}
Let $f_n$ be a sequence of rational maps. 
A {\em period $p$ rescaling} for $f_n$ is a sequence $M_n \in \PSL_2(\C)$ so that $M_n^{-1} \circ f_n^p \circ M_n$ converges compactly away from a finite set to a non-constant rational map $g$.
The limiting map $g$ is called a {\em rescaling limit of period $p$}.

In \cite{Kiwi15}, Kiwi proved that for a sequence of rational maps $f_n$ of degree $d$, there can be at most $2d-2$ dynamically distinct cycles of rescaling limits that are not post-critically finite.
In the same paper, Kiwi asked if the number of cycles of non-monomial rescaling limits is finite.

Using tree mapping schemes, and a diagonal argument, we give the negative answer to the question:
\begin{theorem}\label{thm:infiniterescalinglimit}
For any $d\geq 3$, there is a sequence $f_n$ of rational maps of degree $d$ with infinitely many dynamically independent cycles of non-monomial rescaling limits.
\end{theorem}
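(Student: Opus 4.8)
The plan is to produce the desired rescaling limits one at a time, as local models of \emph{separate} degree-$d$ maps, then to pull each one out along a stretch degeneration and splice the stretches together by a diagonal (interleaving) construction, so that every one of them persists as a rescaling limit of a single sequence.

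First I would assemble an infinite supply of target maps. By the sharpness of the bounds in Theorem \ref{thm:bound} and Corollary \ref{cor:bound} for arbitrarily large $N(f)$, established by the explicit constructions of \S\ref{sec:JC} (cf.\ \cite{CP19}), for every $d\ge 3$ there are degree-$d$ hyperbolic rational maps with finitely connected Fatou set carrying a complex-type periodic Julia component whose local model has arbitrarily many post-critical points; in particular the set of conformal conjugacy classes of local models $g_K$ occurring at complex-type periodic Julia components of degree-$d$ maps is infinite. I would fix a sequence $g^{(1)},g^{(2)},\dots$ inside it that is pairwise non-conformally-conjugate, and — passing to a subsequence if necessary, using that each map has only countably many iterates — dynamically independent in the sense needed for distinct cycles of rescaling limits. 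Each $g^{(i)}$ is of complex type, hence non-monomial (its Julia set is not a Jordan curve), and post-critically finite, so none of them is counted among the $\le 2d-2$ non-post-critically-finite cycles of rescaling limits of \cite{Kiwi15}. Concretely, each $g^{(i)}$ would be realized by starting from the tree mapping scheme of such a sharp example (e.g.\ a variant of Figure \ref{fig:BuriedSierpinskiTreeMap}) and modifying the sphere map at the vertex sitting over the complex-type cycle; as long as the resulting scheme remains irreducible and satisfies the six conditions of Definition \ref{defn:hpcf}, Theorem \ref{thm:classification} realizes it by a hyperbolic component $U_i$, and Theorem \ref{thm:ConjugacyPeriodicComponents} then guarantees that any $h_i\in U_i$ has a complex-type periodic Julia component $K_i$, of some period $p_i$, with $g_{K_i}$ conformally conjugate to $g^{(i)}$.

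Next I would extract each $g^{(i)}$ as a rescaling limit and then diagonalize. Since the tree mapping scheme of $U_i$ has a non-trivial tree (the Fatou set is disconnected), $U_i$ is unbounded by \cite{Makienko00}, so I may fix a stretch degeneration $h_{i,k}\to\infty$ as $k\to\infty$. By the stretch-to-rescaling dictionary of \S\ref{subsec:ms}, after passing to a subsequence there is a rescaling $M_{i,k}\in\PSL_2(\C)$ with $M_{i,k}^{-1}\circ h_{i,k}^{p_i}\circ M_{i,k}\to g_{K_i}$, and composing with a fixed Möbius conjugacy from $g_{K_i}$ to $g^{(i)}$ I may assume
$$
M_{i,k}^{-1}\circ h_{i,k}^{p_i}\circ M_{i,k}\;\longrightarrow\;g^{(i)}\qquad(k\to\infty).
$$
Now form $(f_n)$ by listing, for $k=1,2,3,\dots$ in turn, the finitely many maps $h_{1,k},h_{2,k},\dots,h_{k,k}$ and concatenating these blocks; every $f_n$ has degree $d$. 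For each fixed $i$, the terms of $(f_n)$ of the form $h_{i,k}$ form an infinite subsequence along which $k\to\infty$, so (using the rescalings $M_{i,k}$ on those terms) the rescaled $p_i$-th iterates converge to $g^{(i)}$; hence $g^{(i)}$ is a rescaling limit of the single sequence $(f_n)$ along this subsequence. Since the $g^{(i)}$ are non-monomial and dynamically independent, they yield infinitely many dynamically independent cycles of non-monomial rescaling limits of $(f_n)$, which is the assertion.

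The hard part is the first step: showing that for every $d\ge 3$ there exists a degree-$d$ hyperbolic post-critically finite tree mapping scheme with a complex-type periodic cycle at which the local model can be prescribed within an infinite family — that is, verifying that altering the sphere map at a single vertex preserves irreducibility and the hyperbolic post-critically finite conditions of Definition \ref{defn:hpcf}, and that such schemes already exist in low degree. This is precisely the combinatorial flexibility that tree mapping schemes are designed to provide, and it is developed together with the sharp examples of \S\ref{sec:JC}; granting it, the remaining steps — the stretch-to-rescaling statement of \S\ref{subsec:ms} and the interleaving diagonal — are routine.
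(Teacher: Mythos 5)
There is a genuine gap, and it lies at the heart of the theorem rather than in the technical details you defer. A rescaling of period $p$ for a sequence $(f_n)$ is, by the definition used here (and in \cite{Kiwi15}), a sequence $M_n\in\PSL_2(\C)$ indexed by the \emph{same} $n$ such that $M_n^{-1}\circ f_n^{p}\circ M_n$ converges compactly away from a finite set --- convergence along the whole sequence. In your interleaved sequence the maps $h_{i,k}$ for different $i$ lie in different hyperbolic components and are degenerated independently; the rescalings $M_{i,k}$ are only defined, and convergence to $g^{(i)}$ only holds, along the sparse set of indices whose terms come from the $i$-th family, and there is no way to extend them to the complementary indices so that the full sequence converges. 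So each $g^{(i)}$ is only a rescaling limit of a subsequence, and the subsequences for different $i$ are pairwise disjoint. This does not produce one sequence with infinitely many dynamically independent cycles of rescaling limits; it merely exhibits infinitely many different (sub)sequences each carrying one limit, which would make Kiwi's question trivial and which is not what the theorem asserts --- dynamical independence is a relation among rescalings of a fixed sequence and becomes vacuous when the limits live on disjoint subsequences. Nor can you repair this by packing all the local models into a single map: by Corollary \ref{cor:bound} a hyperbolic map has at most $N(f)-2$ complex-type cycles, so the infinitude must be distributed over a family in a more structured way.

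The paper's diagonal is of a different nature. By iterated surgery on periodic points of the tree map one builds, for each $k$, a \emph{single} irreducible hyperbolic post-critically finite tree mapping scheme $(F_k,\mathbf{f}_k)$ of the fixed degree $d$ carrying $k$ distinct complex-type cycles, with periods $p_m=3^m$ and local models conjugate to $1-1/z^{3^{2^m}}$ for $m<k$; realizing it gives one sequence $g_{k,n}$ such that for every $m\le k$ there are rescalings $M_{k,m,n}$ with $M_{k,m,n}^{-1}\circ g_{k,n}^{p_m}\circ M_{k,m,n}\to 1-1/z^{3^{2^m}}$ as $n\to\infty$. The diagonal then chooses $n(k)$ so that $h_k=g_{k,n(k)}$ is $1/k$-close to all of the first $k$ limits on a compact exhaustion; consequently, for each \emph{fixed} $m$, the whole tail of the single sequence $(h_k)$ converges under the rescalings $M_{k,m,n(k)}$, so each $1-1/z^{3^{2^m}}$ is an honest period-$p_m$ rescaling limit of $(h_k)$, and independence follows because the limits have pairwise distinct degrees. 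In short, the step you call routine --- the interleaving --- is exactly the step that fails, and the step you call combinatorial flexibility must be strengthened to producing arbitrarily many complex-type cycles \emph{within one scheme} of fixed degree, which is precisely what the iterated surgery of \S\ref{sec:JC} provides.
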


We remark that Theorem \ref{thm:infiniterescalinglimit} improves a result in \cite{AC16} where it is shown that there can be an arbitrarily large number of dynamically independent cycles of non-monomial rescaling limits.
Our construction can be modified to produce a holomorphic family of rational maps of degree $d\geq 3$ with an arbitrarily large number of dynamically independent cycles of non-monomial rescaling limits.
However, whether a holomorphic family of rational maps can admit infinite non-monomial rescaling limits still remains open.

\subsection*{Structure of the paper}
The definitions of tree mapping schemes are given in \S \ref{sec:treemappingschemes}, followed by various examples in \S \ref{sec:eg}.
Theorem \ref{thm:cvg} is proved in \S \ref{sec:tms}.
The realization problem is discussed in \S \ref{sec:tmsr}, where Theorem \ref{thm:classification} is proved.
We study Julia components and cycles of rescaling limits in \S \ref{sec:JC} and \S \ref{sec:rl}.

We encourage the reader to first explore the examples in \S \ref{sec:eg} before going through the proofs.
The notion of post-critically finite hyperbolic tree mapping scheme should be intuitively clear after several examples.

\subsection*{Acknowledgment}
The author would like to thank Dzmitry Dudko, Curt McMullen for helpful discussions and useful advices; Laura DeMarco for bringing up the motivation question.
The author would like to thank the anonymous referees for useful comments and suggestions.

\section{Tree mapping schemes}\label{sec:treemappingschemes}
In this section, we give the definition of post-critically finite hyperbolic tree mapping schemes.

\subsection*{Tree mapping schemes}
A finite tree $\mathcal{T}$ is a non-empty, connected, finite $1$-dimensional simplicial complex without cycles.
The set of vertices of $\mathcal{T}$ will be denoted by $\mathcal{V}$, and the set of closed edges will be denoted by $\mathcal{E}$.
The edges adjacent to a given vertex $x\in \mathcal{V}$ form a finite set $T_x\mathcal{T}$ and will be called the {\em tangent space} of $\mathcal{T}$ at $x$,
whose cardinality $\nu(x)$ is the {\em valence} of $x$.
We will usually use $\vec{v}$ to represent an element in $T_x\mathcal{T}$ and $e_{\vec{v}}$ to denote the edge associated to $\vec{v}$.

A point $x\in \mathcal{V}$ is called an {\em end point} or a {\em branch point} if $\nu(x) = 1$ or $\nu(x)\geq 3$ respectively.
We will use $\partial \mathcal{T}\subseteq \mathcal{V}$ and $\mathcal{B}\subseteq \mathcal{V}$ to denote the set of end points and the set of branch points of $\mathcal{T}$.

In this paper, every finite tree $\mathcal{T}$ is equipped with a path metric $d$, i.e., a metric on $\mathcal{T}$ satisfying
$$
d(x,z) = d(x,y) + d(y,z)
$$
whenever $y$ lies on the unique arc connecting $x$ and $z$.
This metric is uniquely determined by the lengths $d(e)=d(a,b)$ it assigns to edges $e=[a,b]\in \mathcal{E}$.

A {\em tree of Riemann spheres} $(\mathcal{T}, \hat\C^\mathcal{V})$ is a finite tree $\mathcal{T}$ with the vertex set $\mathcal{V}$, a disjoint union of Riemann spheres $\hat \C^{\mathcal{V}} := \bigcup_{a\in {\mathcal{V}}}\hat \C_{a}$, together with markings $\xi_a: T_a\mathcal{T} \xhookrightarrow{} \hat\C_a$ for $a\in \mathcal{V}$.
The set of {\em marked points} is denoted by $\Xi_a:= \xi_a(T_a \mathcal{T}) \subseteq \hat\C_a$ and $\Xi := \bigcup_{a\in \mathcal{V}} \Xi_a$.
\begin{defn}
	Let $(\mathcal{T}_1, \hat\C^{\mathcal{V}_1})$ be a tree of Riemann spheres, and let $\mathcal{T}_0 \subseteq \mathcal{T}_1$ be a finite union of subtrees of $\mathcal{T}_1$ with vertices $\mathcal{V}_0 \subseteq \mathcal{V}_1$.
We define a {\em tree mapping scheme} $(F, R)$ from $(\mathcal{T}_0, \hat\C^{\mathcal{V}_0})$ to $(\mathcal{T}_1, \hat\C^{\mathcal{V}_1})$ as a map
$$
F: (\mathcal{T}_0, \mathcal{V}_0) \longrightarrow (\mathcal{T}_1, \mathcal{V}_1)
$$
that is injective on each edge and a union of maps 
$$
R:= \bigcup_{a\in \mathcal{V}_0} R_a : \hat\C^{\mathcal{V}_0} \longrightarrow \hat\C^{\mathcal{V}_1}
$$
with
\begin{itemize}
\item $R_a = R_{a\to F(a)}: \hat\C_a \longrightarrow \hat \C_{F(a)}$ is a non-constant rational map;
\item $R_a \circ \xi_a = \xi_{F(a)} \circ DF_a$,
\end{itemize}
where $DF_a: T_a\mathcal{T}_0 \longrightarrow T_{F(a)}\mathcal{T}_1$ is the tangent map.
We shall call the map $F$ the {\em tree map} and the rational map $R$ the {\em rational mapping scheme}.
\end{defn}
Note that a rational map $f$ can be regarded as the tree mapping scheme with tree map $F: \{a\} \longrightarrow \{a\}$ and rational mapping scheme $R = f$.

Whenever it is defined, we will denote
$$
R^k|_{\hat\C_{a_1}} = R_{a_{k-1}} \circ ... \circ R_{a_1}: \hat\C_{a_1} \longrightarrow \hat\C_{a_k}
$$
where $a_{i+1} = F(a_i)$.

Two tree mapping schemes $(F, R)$ and $(F', R')$ are {\em conjugate} if there exists a simplicial isomorphism
$$
\Phi: \mathcal{T}_{1} \longrightarrow \mathcal{T}'_{1},
$$
and a collection of homeomorphisms
$$
\Psi_a : \hat\C_{a} \longrightarrow \hat\C_{\Phi(a)}
$$
for $a\in \mathcal{V}_{1}$ that conjugate the dynamics:
$$
\Psi_{F(a)}\circ R_{a} = R'_{\Phi(a)} \circ \Psi_a
$$
for any $a\in \mathcal{V}_{0}$.

In the next few subsections, we introduce some natural conditions on the tree mapping schemes $(F, R)$, which will lead to the definition of post-critically finite hyperbolic tree mapping schemes.

\subsection*{Local degrees and piecewise linear tree map}
Let $(F, R)$ be a tree mapping scheme from $(\mathcal{T}_0, \hat\C^{\mathcal{V}_0})$ to $(\mathcal{T}_1, \hat\C^{\mathcal{V}_1})$.
We define the {\em local degree} of a vertex $a\in \mathcal{V}_0$ by 
$$
\deg(a) := \deg (R_a),
$$
as the degree of the corresponding rational map $R_a: \hat\C_a \longrightarrow\hat\C_{F(a)}$.

Let $e = [a,b]$ be an edge of $\mathcal{T}_0 \subseteq \mathcal{T}_1$.
Let $z_{a,e} \in \Xi_a \subseteq \hat\C_a$ and $z_{b,e} \in \Xi_b \subseteq \hat\C_b$ be the corresponding marked points.
We say $R$ has {\em compatible local degrees at $e$} if
$$
\deg_{z_{a,e}} R_a = \deg_{z_{b,e}} R_b,
$$
where $\deg_{z_{a,e}} R_a$ (or $\deg_{z_{b,e}} R_b$) is the local degree of the rational map $R_a$ (or $R_b$) at the point $z_{a,e}$ (or $z_{b,e}$).

The rational mapping scheme $R$ has {\em compatible local degrees} if it has compatible local degrees at every edge $e \in \mathcal{E}_0$.
This compatibility allows us to define the {\em local degree} at an edge $e = [a,b] \in \mathcal{E}_0$ by
$$
\deg(e) := \deg_{z_{a,e}} R_a = \deg_{z_{b,e}} R_b.
$$

Assume $R$ has compatible local degrees. 
We say the tree map $F: \mathcal{T}_0 \longrightarrow \mathcal{T}_1$ is {\em piecewise linear} compatible with the local degree if $F$ is linear with derivative $\deg(e)$ on each edge $e\in \mathcal{E}_0$. 

\subsection*{Exposed points and exposed critical points.}
A point $z\in \hat\C^{\mathcal{V}_1}$ is called {\em exposed} if $z\notin \Xi$.
In other words, $z$ does {\em not} correspond to any tangent direction $\vec{v}\in T_a\mathcal{T}_1$ for any $a\in \mathcal{V}_1$.
A point $z$ is said to have an exposed orbit if $z, R(z), R^2(z)...$, whenever defined, are all exposed.

A critical point $z\in \hat\C^{\mathcal{V}_0}$ of $R$ is called an {\em exposed critical point} if it is exposed.
The set of exposed critical points is denoted by $EC(R)$.

\subsection*{Extension on $\Omega = \mathcal{T}_1-\mathcal{T}_0$}
A finite tree $\mathcal{T}$ is called {\em star-shaped} if there is exactly $1$ vertex that is not an end point.
This unique vertex is called the {\em center}.
Depending on the number of end points, we will call $\mathcal{T}$ a {\em $k$-star} if there are $k$ end points of $\mathcal{T}$ (for $k\geq 2$).
Note that a $k$-star has exactly $k$ edges: $[p,a_j]$ for $j=1,..., k$, where $p$ is the center and $a_j\in \partial \mathcal{T}$. 

Let $(F, R)$ be a tree mapping scheme with compatible local degrees such that $F$ is piecewise linear compatible with the local degree.
Then $\mathcal{T}_1 - \mathcal{T}_0$ is a finite union of open subtrees $\mathcal{U}_k$ with $\partial \mathcal{U}_k \subseteq \mathcal{V}_0$.
We denote the union by
$$
\Omega = \bigcup_{k=1}^n \mathcal{U}_k.
$$
We say that $\Omega$ is {\em star-shaped} if each closure of the component $\overline{\mathcal{U}}_k$ is star-shaped.
In this case, $\mathcal{V}_1$ is the union of $\mathcal{V}_0$ and the centers of $\overline{\mathcal{U}}_k$.

Assume that $\Omega$ is star-shaped.
Let $\mathcal{U}$ be a component of $\Omega$ with center $p$.
We say $(F, R)$ extends to a {\em branched covering} on $\mathcal{U}$ if there exists a component $\mathcal{W}$ of $\Omega$ with center $p'$, a rational map $R_p: \hat\C_p \longrightarrow \hat\C_{p'}$ so that
\begin{itemize}
	\item $F(\partial \mathcal{U}) = \partial \mathcal{W}$;
	\item $F$ extends to a piecewise linear map $F: \overline{\mathcal{U}} \longrightarrow \overline{\mathcal{W}}$ that is compatible with the local degree;
	\item $\Xi_p = R_p^{-1}(\Xi_{p'})$.
\end{itemize}
Since the components are star-shaped, the extension of the tree map $F$ is uniquely determined.
We remark that the second condition implicitly implies that $R_p$ is compatible with the tangent map of $F$ at $p$, and has compatible local degrees with $R$. The last condition simply says that $\mathcal{U}$ is the `full preimage' of $\mathcal{W}$.

We say a component $\mathcal{U}$ of $\Omega$ {\em escapes} $\mathcal{T}_1$, or $\mathcal{U}$ is an {\em escaping component} if there exist $v\in \mathcal{V}_0$ and an exposed point $z_v\in \hat\C_v-\Xi_v$ such that 
\begin{itemize}
	\item $F(\partial \mathcal{U}) = v$;
	\item For any $a\in \partial \mathcal{U}$ and $z_a\in \Xi_a \subseteq \hat\C_a$ corresponding to the edge adjacent to $a$ in $\mathcal{U}$, $R(z_a) = z_v$.
\end{itemize}

\subsection*{Post-critically finite hyperbolic tree mapping schemes}
We are now able to define post-critically finite hyperbolic tree mapping schemes:
\begin{defn}\label{defn:hpcf}
Let $(F,R)$ be a tree mapping scheme from $(\mathcal{T}_0, \hat\C^{\mathcal{V}_0})$ to $(\mathcal{T}_1, \hat\C^{\mathcal{V}_1})$.
Let $\Omega = \mathcal{T}_1 - \mathcal{T}_0$.
It is said to be {\em post-critically finite hyperbolic} if
\begin{enumerate}[label=\roman*)]
\item $R$ has compatible local degrees;
\item $F$ is piecewise linear compatible with the local degree;
\item {\em (Extension on $\Omega$)} $\Omega$ is star-shaped, and for any component $\mathcal{U}$ of $\Omega$, either
\begin{enumerate}
\item $\mathcal{U}$ is an escaping component; or
\item $(F,R)$ extends to a branched covering on $\mathcal{U}$.
\end{enumerate}
Moreover, every component $\mathcal{U}$ of $\Omega$ is eventually mapped to an escaping component;
\item {\em (Hyperbolicity and post-critical finiteness for exposed critical points)} For any exposed critical point $z\in EC(R)$, $z$ has an exposed orbit and is eventually mapped to a periodic exposed critical cycle;
\item {\em (Hyperbolicity and post-critical finiteness for $\Omega$)} For any escaping component $\mathcal{U}$ of $\Omega$, $a\in \partial \mathcal{U}$ and $z_a\in\Xi_a \subseteq \hat\C_a$ associated to the edge adjacent to $a$ in $\mathcal{U}$, $R(z_a)$ has an exposed orbit and is eventually mapped to a periodic exposed critical cycle;
\item {\em (Hyperbolicity for the tree map)} The non-escaping set
$$
\mathcal{J}=\bigcap_{n=0}^\infty F^{-n}(\mathcal{T}_0)
$$
is totally disconnected.
\end{enumerate}
\end{defn}
We remark that the extension of $(F, R)$ to $\mathcal{U}$ is {\em not} canonical. See more on the discussion after Proposition \ref{prop:hpcf}.

Given a post-critically finite hyperbolic tree mapping scheme $(F, R)$, there is a natural way to define the degree of $(F,R)$. If $v\in \mathcal{V}_0$, $n_c(v)$ is the number of exposed critical points counted multiplicities at $v$. If $\mathcal{U}$ is an escaping component, $n_c(\mathcal{U}) = \sum \deg_{z_{x}} R_x$ where $x \in \partial \mathcal{U}$ and $z_{x} \in \hat\C_x$ corresponds to the adjacent edge in $\mathcal{U}$. Otherwise, $n_c(\mathcal{U})$ is the number of exposed critical points counted multiplicities at the center of $\mathcal{U}$ for any extension of the tree mapping scheme.
The degree of $(F,R)$ is
$$
\deg (F,R) = (\sum n_c(v) + \sum n_c(\mathcal{U})+2)/2.
$$
Note that it is not hard to verify that this degree is always a natural number. It also follows immediately from Theorem \ref{thm:classification} and the following discussion.

\subsection*{Remarks on the conditions}
Before studying properties for such tree mapping schemes, we will give intuitions behind each of the 6 conditions.

Let $f$ be a hyperbolic rational map with finitely connected Fatou set.
We shall prove later that each vertex $v\in \mathcal{V}_0$ corresponds to some pre-periodic Julia component $K_v$, and the length of the edge corresponds to the modulus between the two components. 
The exposed critical points correspond to the critical points in the disk Fatou components.
Each component of $\Omega$ corresponds to a critical or post-critical multiply connected Fatou component or a multiply connected Fatou component that separates the critical and post-critical points in three or more components.

The first two conditions in the definition come from the usual relations on moduli for coverings between annuli.
The condition iii) is a direct translation of the dynamics on the corresponding Fatou components. 
The conditions iv) and v) follow from the hyperbolicity and the fact that as we stretch the rational map $f$, the post-critical set is buried deeper and deeper in the Fatou set.
The last hyperbolicity condition for the tree map comes from the Gr\"otzsch's inequality for the moduli of annuli.
It is related to the necessary condition in Thurston's theory on rational maps (see Appendix B in \cite{McM94}). See \S \ref{sec:eg} for more discussions.

The number $n_c(v)$ is the number of critical points in the disk Fatou components whose boundaries are contained in the Julia component $K_v$, and $n_c(\mathcal{U})$ is the number of critical points in the corresponding multiply connected Fatou component $U$.
Thus, $\deg (F,R)$ equals the degree of $f$.

\subsection*{Properties of hyperbolicity and post-critical finiteness}
The following proposition follows directly from the definition and is worth mentioning.
\begin{prop}
Let $(F,R)$ be a post-critically finite hyperbolic tree mapping scheme, then
\begin{enumerate}
\item The non-escaping set $\mathcal{J} \neq \emptyset$.
\item If $a\in \mathcal{V}_0$ is a periodic point of period $p$, then $R^p|_{\hat\C_a}$ is post-critically finite and hyperbolic.
\end{enumerate}
\end{prop}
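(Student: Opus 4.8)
The plan is to verify the two assertions directly from Definition~\ref{defn:hpcf}, handling the non-emptiness of $J$ first and then the hyperbolic post-critically finite property of the return map $\mathbf{f}^p|_{\hat\C_a}$.

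For part (1), the key observation is condition (iii): every component $\mathcal{U}$ of $\Omega = \mathcal{T}_1 - \mathcal{T}_0$ eventually falls into the second case, meaning $F$ eventually collapses $\partial\mathcal{U}$ to a single vertex $v \in V_0$. Equivalently, the escaping dynamics through $\Omega$ does \emph{not} produce new subtrees on which $F$ is a (branched) covering onto another component of $\Omega$ indefinitely --- each gap is eventually ``filled in.'' I would argue as follows. First, condition (vi) says $J = \bigcap_{n\geq 0} F^{-n}(\mathcal{T}_0)$ is totally disconnected; I want to show it is in fact nonempty. Since $F$ is a tree map and each $F^{-n}(\mathcal{T}_0)$ is a nonempty closed subforest of the compact tree $\mathcal{T}_1$ --- nonempty because $F$ is defined on all of $\mathcal{T}_0$ and its image lands in $\mathcal{T}_1$, and the preimages form a nested sequence of nonempty compact sets --- the intersection $J$ is nonempty by the finite intersection property for compact sets, provided each $F^{-n}(\mathcal{T}_0)$ is nonempty. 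That each is nonempty follows because $\mathcal{T}_0$ contains a periodic vertex of $F$: indeed $F: V_0 \to V_1$ and by a pigeonhole/combinatorial argument on the finite vertex set, there is at least one periodic point (this uses that, by hyperbolicity condition (iv) and (v), exposed critical cycles exist in $\hat\C^{V_0}$, whose base vertices are periodic in $\mathcal{T}_0$). Any such periodic vertex lies in $J$, so $J \neq \emptyset$.

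For part (2), suppose $a \in V_0$ is $F$-periodic of period $p$. The return map $g_a := \mathbf{f}^p|_{\hat\C_a}$ is a rational map of $\hat\C_a$, being a composition of the rational maps $f_{a_i \to a_{i+1}}$ along the cycle $a = a_0 \to a_1 \to \cdots \to a_p = a$. I must show $g_a$ is hyperbolic post-critically finite in the classical sense, i.e.\ its critical points all have finite forward orbit and converge to attracting cycles --- equivalently, the post-critical set is finite and disjoint from the Julia set. A critical point $w$ of $g_a$ is a critical point of $\mathbf{f}^p$ restricted to $\hat\C_a$, hence maps under some $\mathbf{f}^j$ ($0 \le j < p$) to a critical point $z$ of one of the factor maps $f_{a_j \to a_{j+1}}$, so $z \in C(\mathbf{f})$. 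I would now split into cases according to whether $z$ is exposed or marked. If $z \in EC(\mathbf{f})$ is an exposed critical point, condition (iv) says $z$ has exposed orbit and is eventually mapped into a periodic exposed critical cycle; since $a$ is $F$-periodic, the whole orbit stays within the finitely many spheres $\hat\C_{a_0}, \dots, \hat\C_{a_{p-1}}$, and the exposed periodic critical cycle gives (via the standard local linearization at a super-attracting cycle, or by noting these are super-attracting for $g_a$) an attracting cycle for $g_a$ containing the image of $w$; hence $w$ has finite forward orbit under $g_a$ landing in a super-attracting cycle. If instead $z$ lies in $\mathcal{M}$, i.e.\ $z = z_{a_j, e}$ is a marked point corresponding to an edge $e$ adjacent to $a_j$, then that edge either lies in $\mathcal{T}_0$ or borders a gap $\mathcal{U}$; in the latter case condition (v) applies to $z_{a_j}$ and again forces the forward orbit into a periodic exposed critical cycle, and in the former case the marking-compatibility $f_{a\to F(a)}\circ\psi_a = \psi_{F(a)}\circ D_aF$ lets me track $z$ combinatorially through the tree, where by (iii) (every $\mathcal{U}$ eventually collapses) and (vi) (the non-escaping set is totally disconnected) the relevant edge-orbit is eventually pushed into a gap and thence to an exposed critical cycle. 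In all cases the forward $g_a$-orbit of every critical point is finite and lands in a super-attracting cycle, which is exactly the definition of hyperbolic post-critically finite.

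The main obstacle I anticipate is the bookkeeping in part (2) when the critical point $z$ is a \emph{marked} point whose edge lies inside $\mathcal{T}_0$ rather than on the boundary of a gap: one must show that following such a point under the tree dynamics it cannot ``wander forever'' inside $\mathcal{T}_0$ without being captured, and this is precisely where condition (vi) (total disconnectedness, hence all points eventually escape except the nowhere-dense $J$) combined with condition (iii) (every gap eventually collapses) must be combined carefully --- a critical orbit trapped in $J$ would contradict hyperbolicity, so one argues by contradiction that the marked-point orbit escapes through some $\mathcal{U}$ in finitely many steps, at which point (v) takes over. Making the ``critical points cannot lie in $J$'' step rigorous, and confirming that the resulting finite attracting orbits are genuinely super-attracting (so that $g_a$ is hyperbolic and not merely post-critically finite with a neutral cycle), is the crux; everything else is a routine unwinding of the definitions.
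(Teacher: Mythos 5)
Your part (1) is, once the superfluous nested-compact-set packaging is stripped away, the paper's argument: conditions iv)--vi) produce a periodic exposed critical cycle, the vertex carrying it is a periodic point of $F$ in $V_0$, and a periodic point of $\mathcal{T}_0$ lies in $J$. (Note that plain pigeonhole on $F:V_0\to V_1$ does not suffice, since $V_1\supsetneq V_0$ and an orbit can leave $V_0$; you correctly fall back on the exposed critical cycle for this.)

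In part (2) there is a genuine gap, and it sits exactly where you yourself locate the ``crux.'' For a critical point of a factor map that is a \emph{marked} point whose edge lies in $\mathcal{T}_0$, you claim that conditions iii) and vi) force the edge-orbit to be ``eventually pushed into a gap and thence to an exposed critical cycle,'' and your fallback is that ``a critical orbit trapped in $J$ would contradict hyperbolicity.'' Both steps fail. First, the claim is simply false: in the Cantor-circle scheme of \S\ref{sec:eg} (with $f_{A\to A}(z)=z^2$ and the marking in which the direction away from $A$ is $0_A$), the point $0_A$ is a \emph{marked} critical fixed point whose orbit stays in $\mathcal{M}$ forever and never meets $\Omega$ or any exposed point; condition vi) only says that every \emph{edge} of $\mathcal{T}_0$ eventually maps partly outside $\mathcal{T}_0$, which does not prevent the germ of the edge at the vertex (equivalently, the tangent direction carrying the marked point) from remaining in $\mathcal{T}_0$ for all time. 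Second, the contradiction you propose is circular: hyperbolicity of $\mathbf{f}^p|_{\hat\C_a}$ is precisely what is being proved, so you cannot invoke it to rule out a recalcitrant critical orbit, and in fact the orbit need not escape at all.

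The correct treatment of this case (which is the paper's) is different: since $\mathcal{M}$ is finite, a critical orbit that never leaves $\mathcal{M}$ is eventually periodic in $\mathcal{M}$; one then shows the periodic marked point $w$ it lands on must \emph{itself} be a critical point. Indeed, if $w$ were not critical, the local degrees along its cycle would all be $1$, so by conditions i) and ii) the associated edge germ would be periodic under $F$ with derivative $1$, producing a nondegenerate interval inside the non-escaping set and contradicting condition vi). Thus the orbit terminates in a periodic \emph{marked} critical cycle --- a superattracting cycle of $\mathbf{f}^p|_{\hat\C_a}$ that is never exposed --- and hyperbolic post-critical finiteness follows. Your write-up omits this mechanism entirely, so as it stands the argument does not cover all critical orbits.
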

\begin{proof}
For the first statement, we note that by conditions iv), v) and vi), there exist periodic exposed critical points. 
Hence some point in $\mathcal{V}_0$ is periodic, so $\mathcal{J}\neq \emptyset$.

For the second statement, let $z\in \hat\C_a$ be a critical point of $R^p|_{\hat\C_a}$.
If $z$ is exposed, then it is eventually mapped to a periodic critical point by iv).
Otherwise, $z \in \Xi$. 

There are two cases: either it is eventually mapped outside of $\Xi$ or it is eventually periodic as $\Xi$ is finite.
In the first case, $z$ must be first mapped to some point $w$ corresponding to an edge in $\Omega$, so it is eventually mapped to a periodic critical point by v).
In the second case, after passing to an iterate if necessary, let $w\in \hat\C_a$ be the fixed point in $\Xi$ that $z$ is eventually mapped to.
The point $w$ must be a critical point, as otherwise, the edge $e$ associated to $w$ would be fixed by $F^p$ by  i) and ii).
Then the non-escaping set is not totally disconnected, thus violates condition vi).
Hence, $R^p|_{\hat\C_a}$ is post-critically finite and hyperbolic.
\end{proof}

The totally disconnectedness condition can be checked by a finite procedure using the following proposition:
\begin{prop}\label{prop:nonE}
Let $(F,R)$ be a tree mapping scheme satisfying the first two conditions in Definition \ref{defn:hpcf}. Then the non-escaping set
$\mathcal{J}=\bigcap_{n=0}^\infty F^{-n}(\mathcal{T}_0)$
is totally disconnected if and only if
for any edge $e \in \mathcal{T}_0$, there exists $k\in \N$ such that $F^k(e)$ is not contained in $\mathcal{T}_0$.
\end{prop}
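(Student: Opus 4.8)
The plan is to translate both sides of the equivalence into statements about edges, after which the substance is a single implication that I would prove by contradiction. First I would record that $J=\{x\in\mathcal T_1:\ F^n(x)\in\mathcal T_0\text{ for all }n\ge 0\}$, that $J$ is compact, and that $F(J)\subseteq J$. Consequently, for an edge $e$ of $\mathcal T_0$ the forward orbit $e,F(e),F^2(e),\dots$ remains inside $\mathcal T_0$ precisely when $e\subseteq J$: if $e\subseteq J$ then $F^k(e)\subseteq F^k(J)\subseteq J\subseteq\mathcal T_0$, and conversely if every $F^k(e)$ lies in $\mathcal T_0$ then every point of $e$ has all iterates in $\mathcal T_0$, i.e.\ $e\subseteq J$. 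Thus the hypothesis of the proposition — that every edge of $\mathcal T_0$ has some iterate not contained in $\mathcal T_0$ — is exactly the statement that $J$ contains no closed edge of $\mathcal T_0$, and the proposition is reduced to: $J$ is totally disconnected if and only if $J$ contains no full edge.

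One direction is trivial: a full edge is a connected subset of $J$ with more than one point. For the converse I would argue the contrapositive. Assume $J$ is not totally disconnected; being compact, $J$ then has a connected component with more than one point, so $J$ contains a nondegenerate closed arc, which covers a nondegenerate subarc of some edge of $\mathcal T_0$. The two ingredients to exploit are: $(1)$ geometricity of $F$ — on each edge $e$, $F$ is the restriction of a similarity of ratio $\deg(e)\ge 1$, so $F|_e$ is injective and never decreases length; and $(2)$ forward invariance in the refined form $J\cap e=(F|_e)^{-1}\!\big(J\cap F(e)\big)$ for every edge $e\subseteq\mathcal T_0$, so that $F|_e$ carries $J\cap e$ onto $J\cap F(e)$ by a similarity. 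Iterating $F$ on a nondegenerate subarc of $J$ and using that $\mathcal T_0$ has only finitely many edges, one produces nondegenerate arcs $I_n\subseteq J$, each inside a single edge $e^{(n)}$, with $I_{n+1}\subseteq F(I_n)$ and with the edge-sequence $(e^{(n)})$ eventually periodic; composing over one period and passing to suitable subarcs, one obtains an edge $e^\sharp$ of $\mathcal T_0$, a nondegenerate subinterval of $e^\sharp$ contained in $J$, and an iterate $F^{\sigma}$ that maps it injectively, without decreasing length, onto another subinterval of $e^\sharp$.

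The argument then splits on the product $D$ of the local degrees around this cycle. In the \emph{expanding case} $D\ge 2$: take the longest component of $J\cap e^\sharp$ and push it forward repeatedly by $F^\sigma$. Whenever a forward image of an arc in $J$ covers an entire edge of $\mathcal T_1$ we are finished (that edge lies in $J$); otherwise every such image stays within a single edge, and $F^\sigma$ multiplies its length by $D\ge 2$, so the nested, increasing union of these arcs is a subarc of $e^\sharp$ of length $\ge\ell(e^\sharp)$, hence equals $e^\sharp$. In the \emph{isometric case} $D=1$: every edge visited around the cycle has degree $1$, so $F^\sigma$ restricts to an isometry between subintervals of $e^\sharp$; using recurrence among the finitely many maximal-length components of $J\cap e^\sharp$, one finds an interior point $z\in e^\sharp\cap J$, with $F$-orbit avoiding the vertices of $\mathcal T_1$, that is fixed by some iterate $F^p$. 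At $z$, $F^p$ is locally affine with multiplier $\prod\deg=1$, hence equal to the identity on a neighbourhood of $z$ in $e^\sharp$. Because the whole (length-$p$) orbit of $z$ lies in interiors of edges of $\mathcal T_0$ — such interiors are open in $\mathcal T_1$ and contained in $\mathcal T_0$ as soon as they meet $\mathcal T_0$ — that neighbourhood of $z$ consists of periodic points with orbits in $\mathcal T_0$, hence lies in $J$; letting it grow shows the component of $J\cap e^\sharp$ through $z$ is all of $e^\sharp$. Either way $J$ contains a full edge.

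The genuinely delicate step is the isometric case, where there is no expansion to spread an arc across an edge and one must produce a full edge from recurrence together with the rigidity of integer-slope piecewise-affine maps; particular care is needed for forward images that cross a vertex of $\mathcal T_1$ — each time such an image already covers a whole edge one is done, and otherwise one must track the two incident edges when feeding the image back into the recurrence. It is convenient to phrase the whole converse measure-theoretically: a closed subset of a finite metric tree is totally disconnected if and only if it has zero one-dimensional Hausdorff measure, and the exact relation $\ell(J\cap e)=\deg(e)^{-1}\,\ell\big(J\cap F(e)\big)$ governs how the length of $J$ is distributed among edges and can be used to organise the bookkeeping in both cases. Throughout, finiteness of the edge set of $\mathcal T_1$ is what makes the recurrence available and, in the end, what makes the stated criterion a finite procedure.
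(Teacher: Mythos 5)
Your reduction (``every edge escapes'' $\Leftrightarrow$ ``$J$ contains no full edge'') and the trivial direction are fine, and your general skeleton (non-decreasing lengths, pigeonhole on finitely many edges, a dichotomy between an expanding and an isometric return) parallels the paper's. But the converse, as written, has a genuine gap exactly at the steps you yourself flag as delicate. Your mechanism for controlling recurrence is the alternative ``either a forward image of an arc in $J$ covers a whole edge, or it stays inside a single edge,'' together with the claims that the $F^{\sigma}$-images of the longest component of $J\cap e^{\sharp}$ are nested subarcs of $e^{\sharp}$ stretched by the factor $D$. None of this is justified: an image can straddle a vertex without covering any edge; an arc in $e^{\sharp}$ need not return to $e^{\sharp}$ under $F^{\sigma}$, and successive images are not nested; and away from the specific orbit pieces $I_n$ used to build the cycle, the derivative of $F^{\sigma}$ is the product of the degrees of whatever edges are actually traversed, which is only $\geq 1$, not $D$. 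The same problem undermines the isometric case: the existence of an interior point $z$ of $e^{\sharp}$ in $J$, periodic and with orbit avoiding all vertices, is asserted via ``recurrence among maximal-length components,'' but those components are not permuted by $F^{\sigma}$ (their images may leave $e^{\sharp}$ or cross vertices), and the ``letting it grow'' step can be obstructed precisely when the endpoint of the growing interval has an orbit hitting a vertex --- the configuration you never treat. The paper's proof supplies the missing device: take a \emph{maximal} non-escaping interval $I$ and show its orbit must meet $V_0$ (otherwise, using eventual periodicity or overlap of the images inside edges, one can strictly enlarge $I$, contradicting maximality); then, after passing to an iterate, $I=[a,b]\subseteq e$ is anchored at a vertex $a\in V_0$, and pigeonhole gives $p$ with $I\subseteq F^p(I)$. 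Because both intervals share the endpoint $a$ and the tangent direction, the return images \emph{are} automatically nested along $e$, so either the derivative is $\geq 2$ and some $F^{kp}(I)$ covers $e$ (so $e$ never escapes, a contradiction), or $F^p(I)=I$, the derivative is $1$, and $e$ is a periodic edge (again a contradiction). Without this maximality/vertex-anchoring argument your recurrence claims do not close.

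A secondary point: the asserted equivalence ``a closed subset of a finite metric tree is totally disconnected if and only if it has zero one-dimensional Hausdorff measure'' is false in the direction you would most likely lean on for bookkeeping --- totally disconnected compact sets of positive length exist (fat Cantor sets) --- so it cannot be used as a reformulation, even though the identity $\ell(J\cap e)=\deg(e)^{-1}\,\ell\bigl(J\cap F(e)\bigr)$ is itself correct.
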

\begin{proof}
It is clear that if the non-escaping set is totally disconnected, then every edge is eventually mapped out of $\mathcal{T}_0$.

To see the converse, we will prove by contradiction.
Assume for contradiction that $\mathcal{J}$ is not totally disconnected, then there exists a non-trivial closed interval $I$ that is non-escaping, i.e. $I, F(I), F^2(I),...$ all lie in $\mathcal{T}_0$.
We assume that $I$ is maximal in the sense that there is no other non-escaping closed interval $\mathcal{J}$ strictly containing $I$.

We claim that $\mathcal{V}_0$ intersects $\cup_{k=0}^\infty F^k(I)$.
Indeed, otherwise, all intervals are strictly contained in edges of $\mathcal{T}_0$.
Since $F$ is piecewise linear compatible with the local degree and $F$ is injective on edges, the length of $F^k(I)$s are non-decreasing.
This implies that either $I$ is eventually periodic, or $F^i(I)$ intersects $F^j(I)$ for some $i< j$.
In the first case, we denote the pre-period by $l$. 
Then $F^l(I)$ is strictly contained in some edges and periodic, so we can enlarge $F^l(I)$ so that it remains periodic, and hence non-escaping. 
By taking preimages, we can also enlarge $I$, which contradicts the maximality of $I$.
Similarly, in the second case, let $i<j$ so that $F^i(I)$ intersects $F^j(I)$, then $F^i(I) \cup F^j(I)$ is still non-escaping and strictly contained in an edge. Take the $i$-th preimage of $F^i(I) \cup F^j(I)$ containing $I$, we get a larger non-escaping set, which contradicts the maximality of $I$.

Therefore, by taking iterates and restricting to a smaller interval, we may assume that $I = [a,b] \subseteq e\in E_0$ with $a\in \mathcal{V}_0$.
Since there are only finitely many vertices and edges, by taking further iterates, we may assume that there exists $p$ so that $I \subseteq F^p(I)$.
If $I$ is strictly contained in $F^p(I)$, then $F^p$ has derivative at least $2$ on $e$ so there exists $k$ so that $F^k(I)$ contains the edge $e$.
This contradicts the premise that the edge $e$ eventually maps out of $\mathcal{T}_0$.
Otherwise, $I = F^p(I)$, then $F^p$ has derivative $1$ on $e$, so $e$ is a periodic edge.
This is also a contradiction.
Therefore, the non-escaping set $\mathcal{J}$ is totally disconnected.
\end{proof}

\subsection*{Irreducible tree mapping schemes}
Given a post-critically finite hyperbolic tree mapping scheme, one can get different but dynamically equivalent tree mapping schemes by introducing more vertices or branches on the tree by taking preimages, or more `gaps' in $\Omega$. 
To get rid of these ambiguities, we introduce the notion of {\em irreducible} tree mapping schemes.

Let $\mathcal{U}$ be a component of $\Omega$ with center $p$.
We say $\mathcal{U}$ is {\em critical} if $n_c(\mathcal{U}) \geq 1$.
We say a component $\mathcal{U}$ of $\Omega$ is {\em primitive} if there is no component $\mathcal{W}$ of $\Omega$ with $F(\mathcal{W}) = \mathcal{U}$ under the extension.

Let $\mathcal{Q}\subseteq \mathcal{V}_0$ be the smallest forward invariant set containing vertices $v$ with $n_c(v) \geq 1$ and the boundaries of critical components of $\Omega$.
Let $\mathcal{P} \subseteq \mathcal{V}_0$ be the smallest forward invariant set containing $\partial \Omega$ and $\mathcal{Q}$. 

Let $\mathcal{B}_1$ be the set of branch points of $\mathcal{T}_1$.
For reasons that will become apparent later, we say a vertex $v\in \mathcal{B}_1$ is a Julia branched point if $v\in \mathcal{J}$, and is a Fatou branch point otherwise. We denote this partition as
$$
\mathcal{B}_1 := \mathcal{B}_1^J \cup \mathcal{B}_1^F.
$$
\begin{defn}\label{defn:irreducible}
Let $(F,R)$ be a post-critically finite hyperbolic tree mapping scheme. It is said to be {\em irreducible} if 
\begin{enumerate}
\item $\mathcal{T}_1$ is the convex hull of $\mathcal{Q}$;
\item $\mathcal{V}_0 = \mathcal{P} \cup  \mathcal{B}_1^J$;
\item Every primitive component $\mathcal{U}$ of $\Omega$ is either critical or contains a branch point in $\mathcal{B}_1^F$.
\end{enumerate}
\end{defn}

By construction, $\mathcal{V}_0$ is forward invariant.
By deleting the additional vertices (and their associated edges if the vertices are ends) and inductively using the extension to fill in those primitive components of $\Omega$ that neither contain critical points nor branch points, any post-critically finite hyperbolic tree mapping scheme $(F, R)$ can be reduced to a unique irreducible one.

\subsection*{Limits of rational maps as tree mapping schemes}
Tree mapping schemes appear as more general limits of rational maps (cf. \S 4  in \cite{Arfeux17}).

\begin{defn}\label{defn:ctm}
	Let $(F, R)$ be a tree mapping scheme from $(\mathcal{T}_0, \hat\C^{\mathcal{V}_0})$ to $(\mathcal{T}_1, \hat\C^{\mathcal{V}_1})$.
	A family of rescalings $M_{a,n} \in  \PSL_2(\C)$ for $a \in \mathcal{V}_1$ is said to {\em represent} the vertices if for all $a\neq b \in \mathcal{V}_1$,
	$$
	M_{b,n}^{-1} \circ M_{a,n}(z) \to \xi_a(\vec{v}_b)
	$$
	compactly away from a single point, where $\vec{v}_b\in T_a\mathcal{T}_1$ is the tangent vector in the direction of $b$.
	
	A sequence $f_n$ of degree $d$ rational maps is said to {\em converge} to $(F, R)$ if there exist {\em rescalings} $M_{a,n} \in \PSL_2(\C)$ representing $a \in \mathcal{V}_1$ such that
$$
M_{F(a),n}^{-1} \circ f_n \circ M_{a,n}(z) \to R_a(z)
$$ 
compactly on $\hat \C_a- \Xi_a$ for all $a\in \mathcal{V}_0$.
We shall call $R_a$ the {\em rescaling limits} of $f_n$ from $M_{a,n}$ to $M_{F(a),n}$.
\end{defn}
Since there is no convenient normal form for the conjugacy classes of rational maps, the rescalings $M_{a,n}$ are regarded as dynamically appropriate normalizations of $f_n$.
A family of rescalings represents the vertices means that the rescalings are compatible with the tree structure $\mathcal{T}_1$: when $a,b$ are adjacent, the Riemann sphere $\hat \C_b$ is `glued' to $\hat \C_a$ at the marked point $\xi_a(\vec{v}_b) \in \Xi_a \subseteq \hat\C_a$.

\section{Examples of tree mapping schemes and rescaling limits}\label{sec:eg}
Before diving into the proof of the theorems, we shall first see some examples of hyperbolic rational maps with finitely connected Fatou set, and how the tree mapping schemes arise for these examples.
To simplify the notations and graphs, we shall omit the centers of a component $\mathcal{U}$ of $\Omega$, i.e., we shall only label the vertices in $\mathcal{V}_0$.

\subsection*{Hyperbolic rational maps with a Cantor set of circles.}
\begin{figure}[ht]
    \centering
    \resizebox{\linewidth}{!}{
    \def\svgwidth{\columnwidth}
    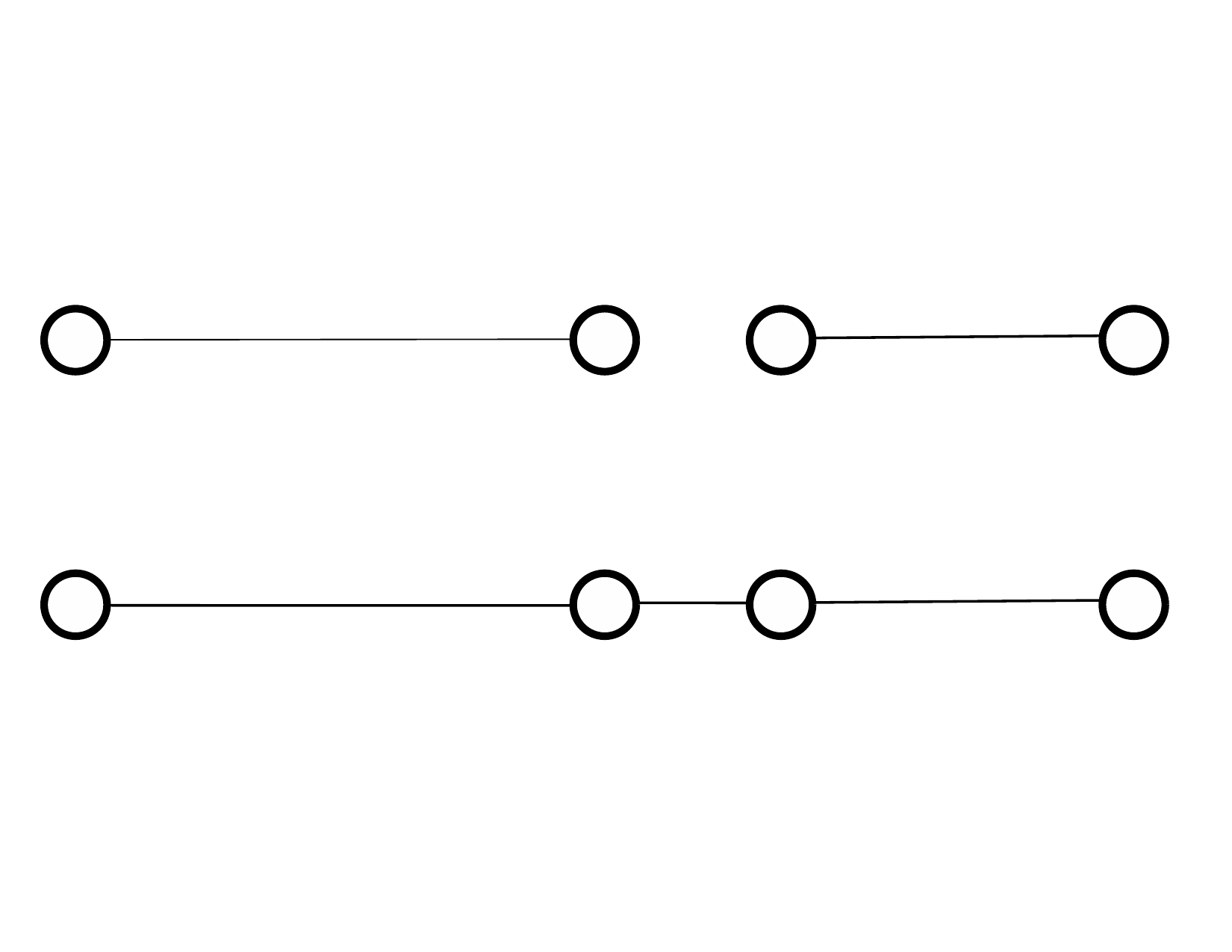

    \def\svgwidth{\columnwidth}
    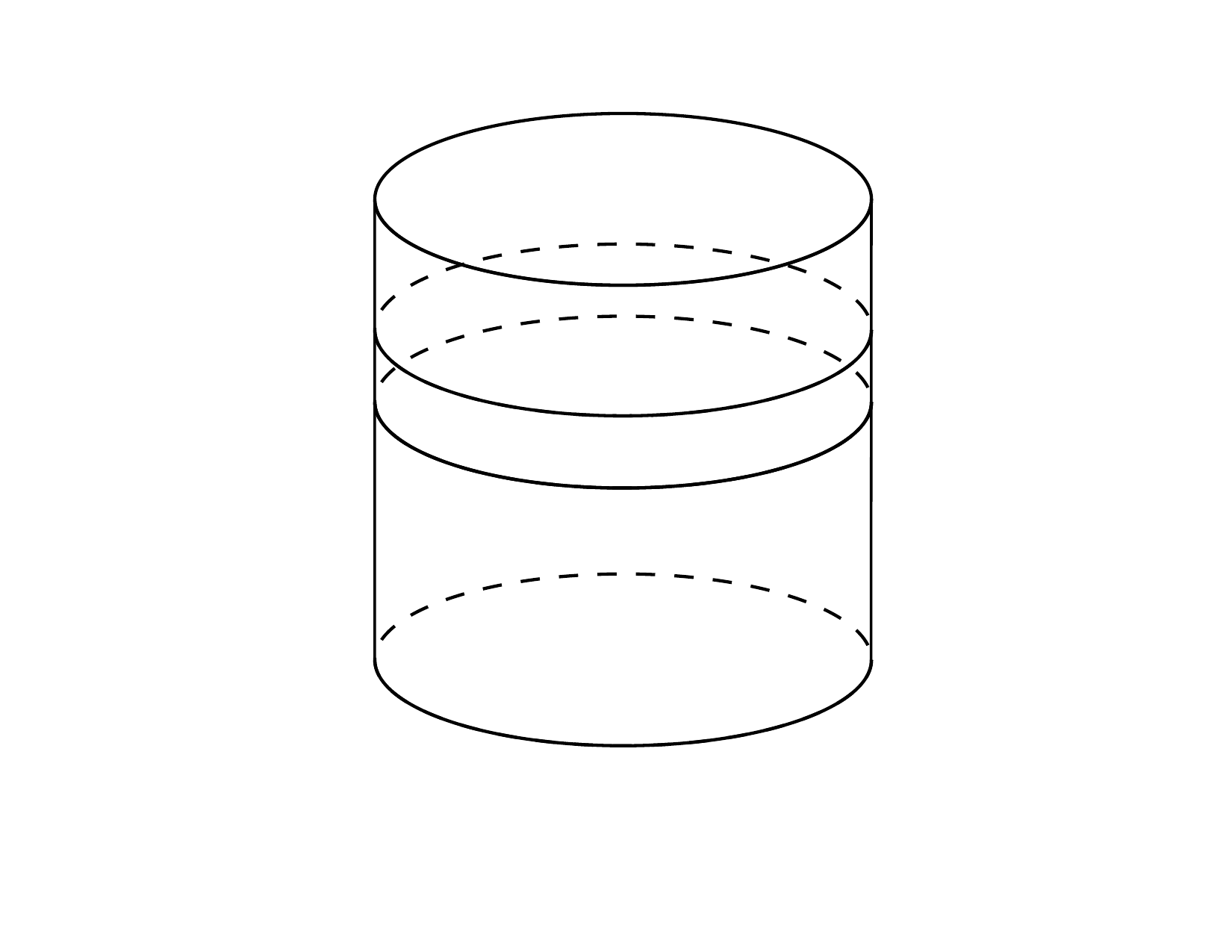

    }
    \caption{The tree map for the Cantor circle example on the left.}
    \label{fig:CantorCircleExample}
\end{figure}
The first example of rational map with non-degenerate buried Julia component was introduced by McMullen in \cite{McM88}.
In that paper, McMullen shows that for all sufficiently large $n$, the Julia set of the rational map $f_n(z) = z^2 + \frac{1}{nz^3}$ is homeomorphic to a Cantor set times circle.
The associated tree map for $f_n(z) = z^2 + \frac{1}{nz^3}$ is described in Figure \ref{fig:CantorCircleExample}.
$\mathcal{T}_1 = [A, A']$, $\mathcal{T}_0 = [A, B]\cup [B', A']$, $\mathcal{V}_1 = \{A, A', B, B'\}$.
The map $F$ sends $[A,B]$ $[A', B']$ homeomorphically onto $[A, A']$ by expanding with factor $2$ and $3$ respectively.

It is already observed in \cite{McM88} the connection with the tree map (see Figure \ref{fig:CantorCircleExample}): for sufficiently large $n$, there exists a cylinder $\mathcal{A}$ containing two cylinders $\mathcal{A}_1$ and $\mathcal{A}_2$, which are mapped to $\mathcal{A}$ via degrees $2$ and $3$ covering maps respectively. 
The boundaries of $\mathcal{A}_1$ and $\mathcal{A}_2$ in Figure \ref{fig:CantorCircleExample} are labeled so that $C_a$ is mapped to $C_b$ if $a$ is sent to $b$ by the tree map $F$.

\begin{figure}[ht]
    \begin{subfigure}{0.45\textwidth}
    \centering
    \includegraphics[width=.8\linewidth]{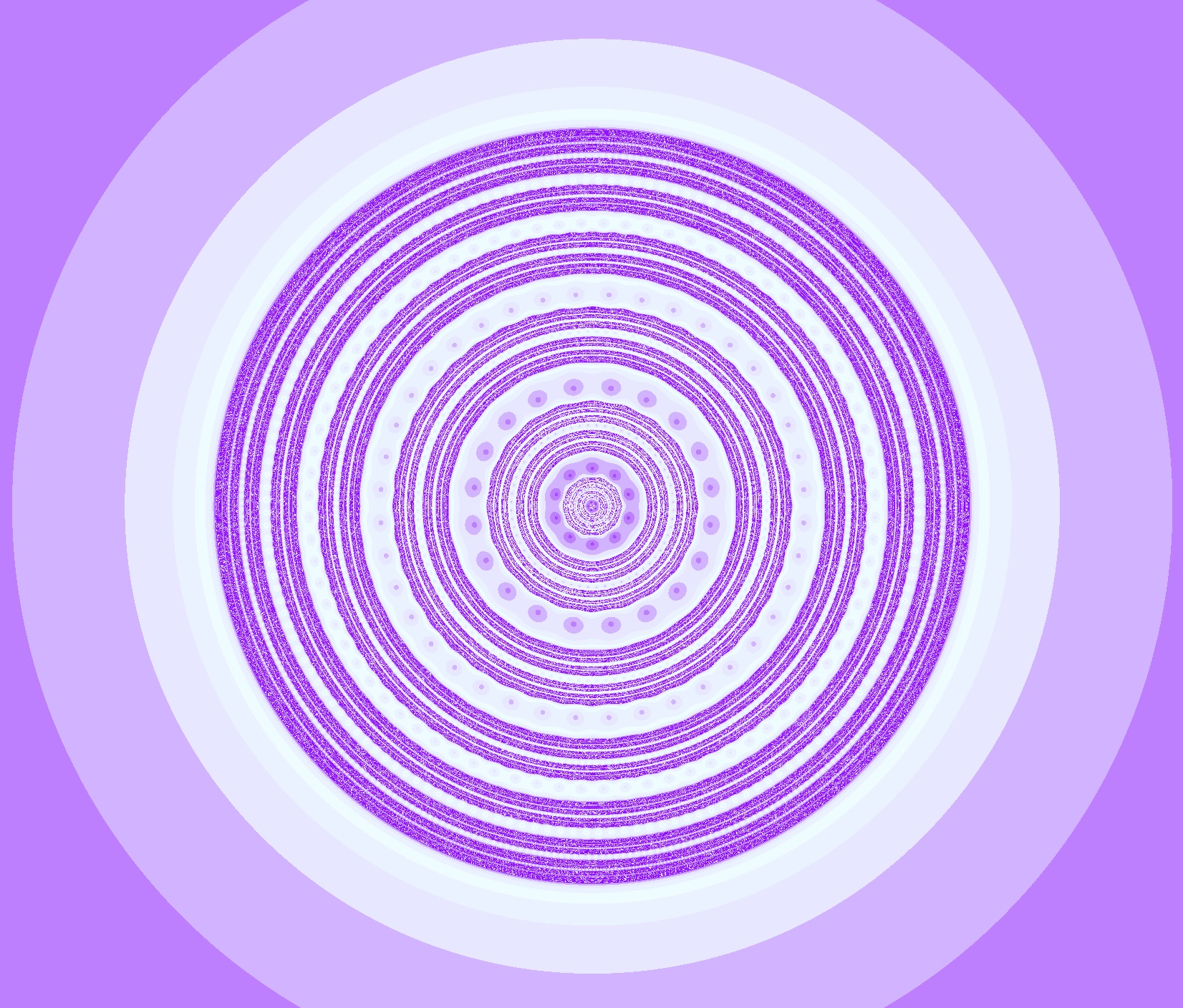}
    \caption{The Julia set of the usual Cantor circle example $f_n(z) = z^2+\frac{1}{nz^3}$. $R_{A\to A}(z) = z^2$}
    \end{subfigure}
    \begin{subfigure}{0.45\textwidth}
    \centering
    \includegraphics[width=.8\linewidth]{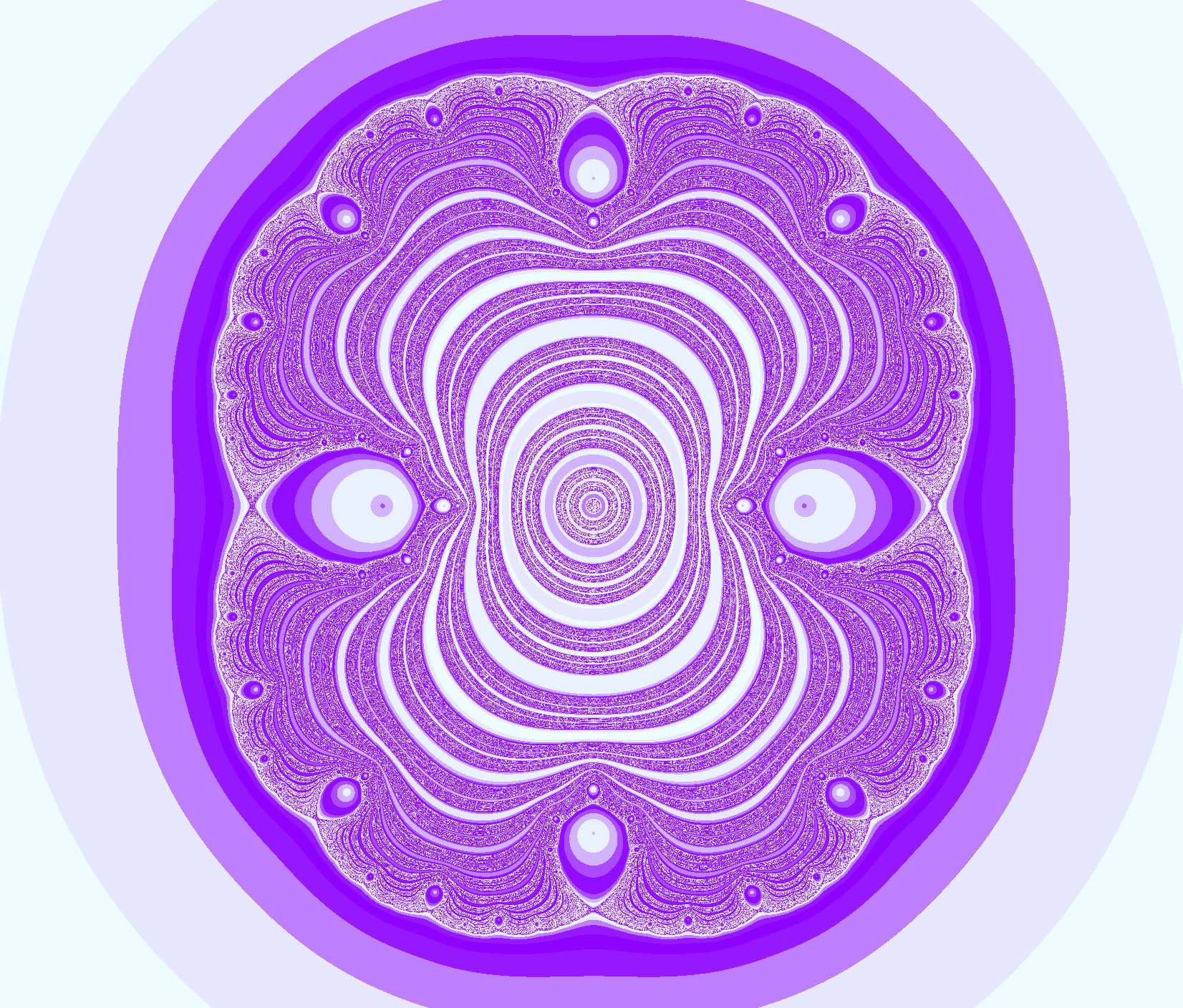}
    \caption{The Julia set of the nested mating of the Basilica polynomial $f_n(z) = \frac{z^2}{z^2-1}+\frac{1}{nz^3}$. $R_{A\to A}(z) = \frac{z^2}{z^2-1}$}
    \end{subfigure}
    \begin{subfigure}{0.45\textwidth}
    \centering
    \includegraphics[width=.8\linewidth]{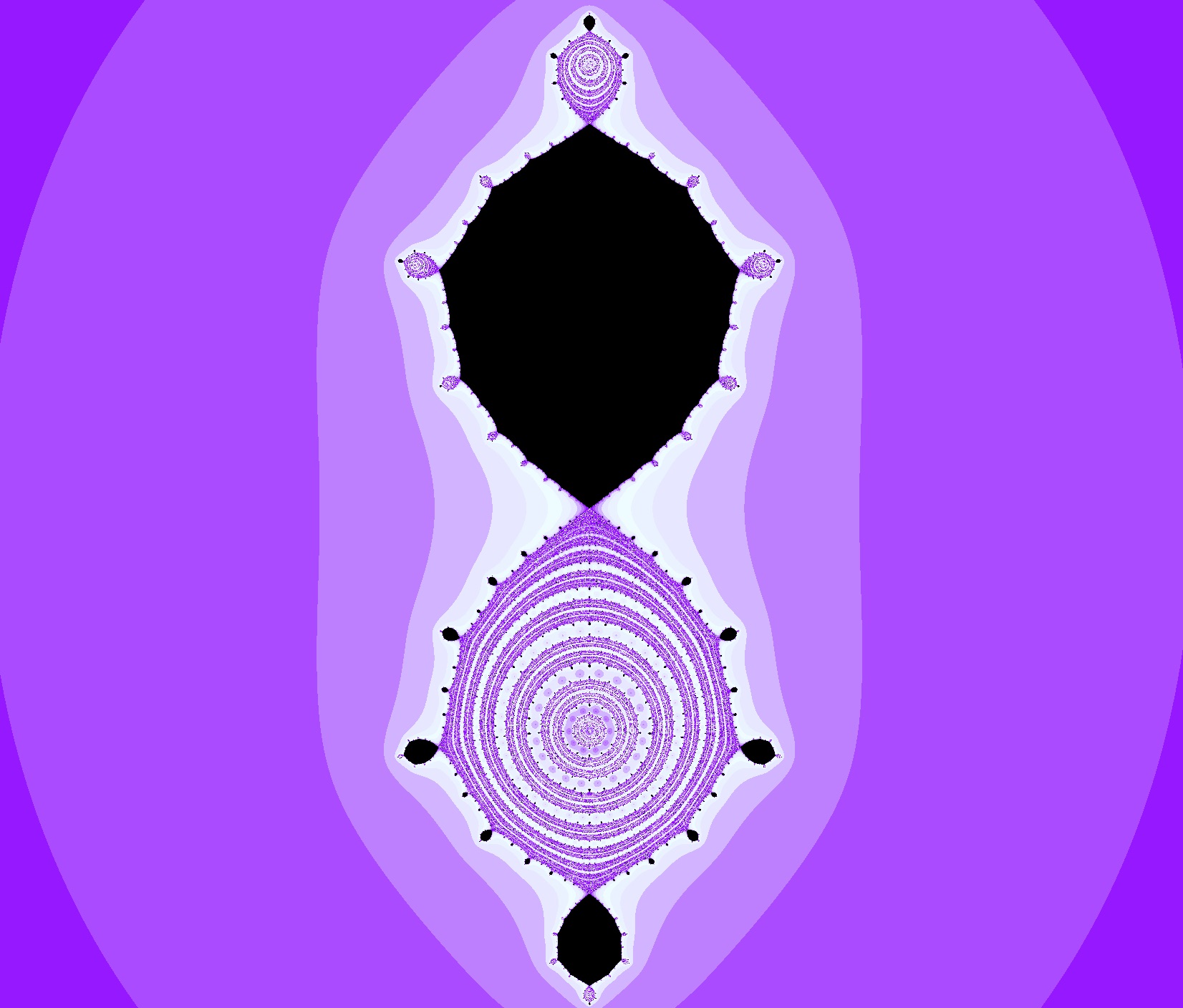}
    \caption{The Julia set of $f_n(z)=z^3-\frac{3i}{\sqrt{2}}z^2+\frac{1}{nz^3}$. $R_{A\to A}(z) = z^3-\frac{3i}{\sqrt{2}}z^2$}
    \end{subfigure}
    \begin{subfigure}{0.45\textwidth}
    \centering
    \includegraphics[width=.8\linewidth]{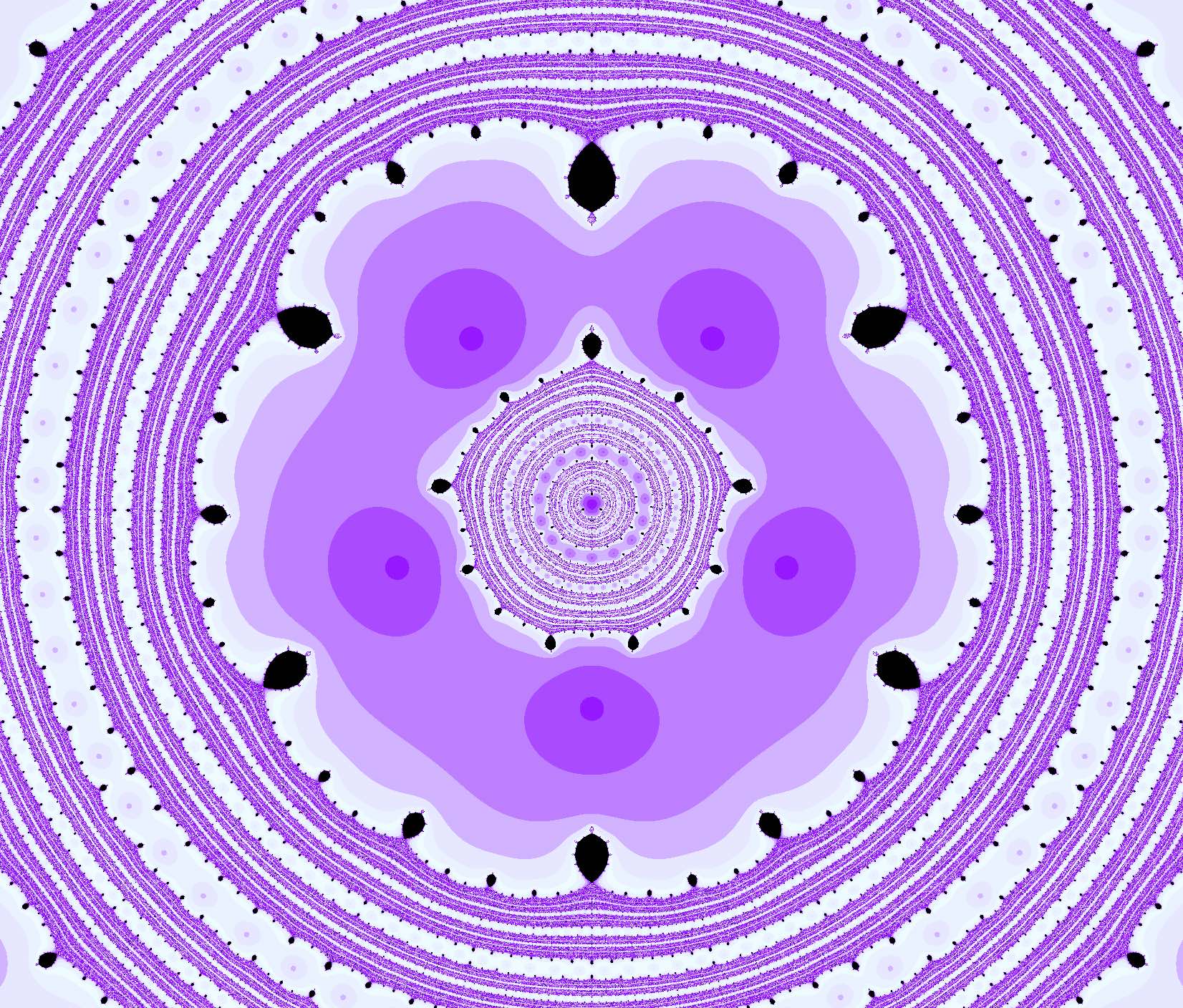} 
    \caption{A zoom of the Julia set on the left.}
    \end{subfigure}
    \caption{All three hyperbolic rational maps have the same tree map as in Figure \ref{fig:CantorCircleExample}. $R_{A\to A}$ is the only difference in these examples.
    The first two examples are of degree $5$, and have exactly $1$ exposed critical periodic cycle (fixed point $\infty_A$ in the first example, and period 2 cycle $\infty_A\to 1_A$ in the second example); the last example is of degree $6$, and has $2$ exposed critical periodic cycle (fixed points $\infty_A$ and ${\sqrt{2}i}_A$).}
    \label{fig:JuliaCantorCircle}
\end{figure}

We mark the Riemann spheres so that the tangent vector pointing towards (respectively, away from) $A$ corresponds to $\infty$ (respectively, $0$). 
Under this marking, the rational mapping scheme associated to $f_n$ is
\begin{align*}
R_{A\to A}(z) = R_{B\to A'}(z) &= z^2,\\
R_{A'\to A}(z) = R_{B'\to A'}(z) &= \frac{1}{z^3}.
\end{align*}

The rescaling limits appear explicitly in this setting.
If we choose the rescalings as $M_{A,n}(z) = z$, $M_{B,n}(z) = n^{-1/6}z$, $M_{B',n}(z) = n^{-2/9}z$ and $M_{A',n}(z) = n^{-1/3}z$, then it is easy to check that the corresponding rescaling limits give the rational mapping scheme.

There are many different post-critically finite hyperbolic tree mapping schemes supported on this tree map.
If we fix $R_{B\to A'}(z) = z^2$ and $R_{A'\to A}(z) = R_{B'\to A'}(z) = \frac{1}{z^3}$ and only change $R_{A\to A}$,
then the conditions on hyperbolicity and post-critical finiteness are equivalent to 
\begin{enumerate}
\item $R_{A\to A}$ is a post-critically finite hyperbolic map.
\item $0_A$ is a degree $2$ critical fixed point of $R_{A\to A}$.
\item The orbits of all other critical points of $R_{A\to A}$ are disjoint from $0_A$.
\item $\infty_{A}$ is preperiodic to some super attracting periodic cycle other than $0_A$.
\end{enumerate}
The Julia sets with different $R_{A\to A}$ can have drastically different shapes (see Figure \ref{fig:JuliaCantorCircle}).
We mention that the annulus $\mathcal{A}$ is visible in all three examples of Figure \ref{fig:JuliaCantorCircle}.
In the first two examples, the closure of $\mathcal{A}$ contains the Julia set.
In the third example, there are infinitely many Julia components that are outside of $\mathcal{A}$, in fact, there are infinitely many periodic Julia components that are outside of $\mathcal{A}$.
However, $\mathcal{A}$ contains all non-degenerate periodic Julia components except for the one that intersects $\partial \mathcal{A}$.

From the algebraic formulas provided in the captions of Figure \ref{fig:JuliaCantorCircle}, it is easy to construct the explicit rescaling limits.
We remark that this is how we find the algebraic formulas for all the figures in this paper.

A direct generalization of the McMullen family is the sequence $f_n(z) = z^{d_1} + \frac{1}{nz^{d_2}}$.
In order for $f_n$ to lie in the same hyperbolic component for all sufficiently large $n$, it is important that $\frac{1}{d_1} + \frac{1}{d_2} < 1$ \cite{DM07}.
This condition follows from the Gr\"otzsch's inequality for the corresponding annuli as in Figure \ref{fig:CantorCircleExample}, and is equivalent to the hyperbolicity condition of the tree map.
The left hand side $\frac{1}{d_1} + \frac{1}{d_2}$ can also be constructed as the spectral radius $\lambda (\Gamma)$ of the Thurston's matrix for some curve system $\Gamma$, and the inequality can be interpreted as the classification of $\lambda(\Gamma) = 1$ (see Theorem B.4 in \cite{McM94}).

\begin{figure}[ht]
    \centering
    \begin{subfigure}{0.45\textwidth}
        \centering
    \def\svgwidth{\columnwidth}
    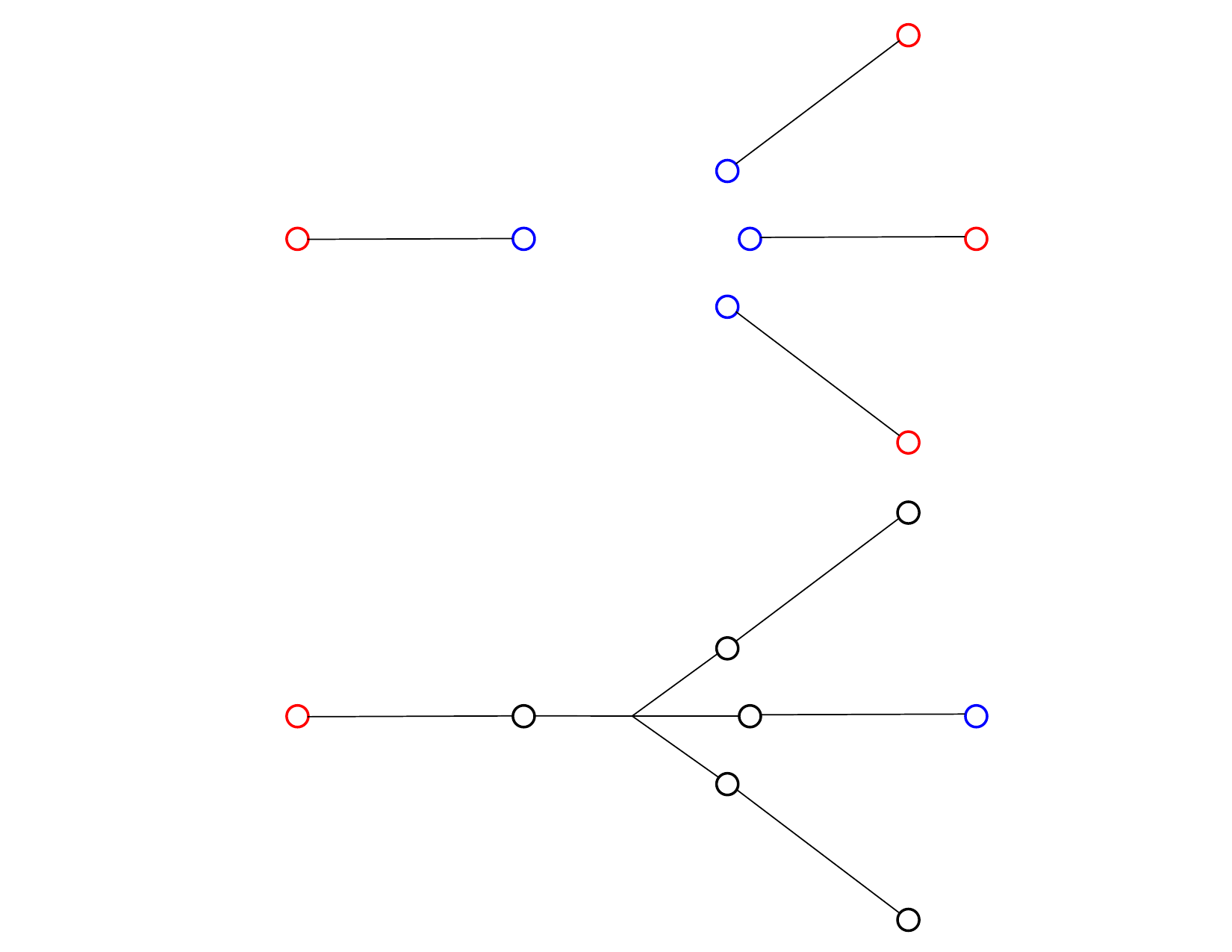

        \caption{The tree map with a quadruply connected critical Fatou component. Each edge is of unit length and $F$ sends red (blue) points to red (blue, respectively) points.}
    \end{subfigure}
    \begin{subfigure}{0.45\textwidth}
        \centering
        \includegraphics[width=\textwidth]{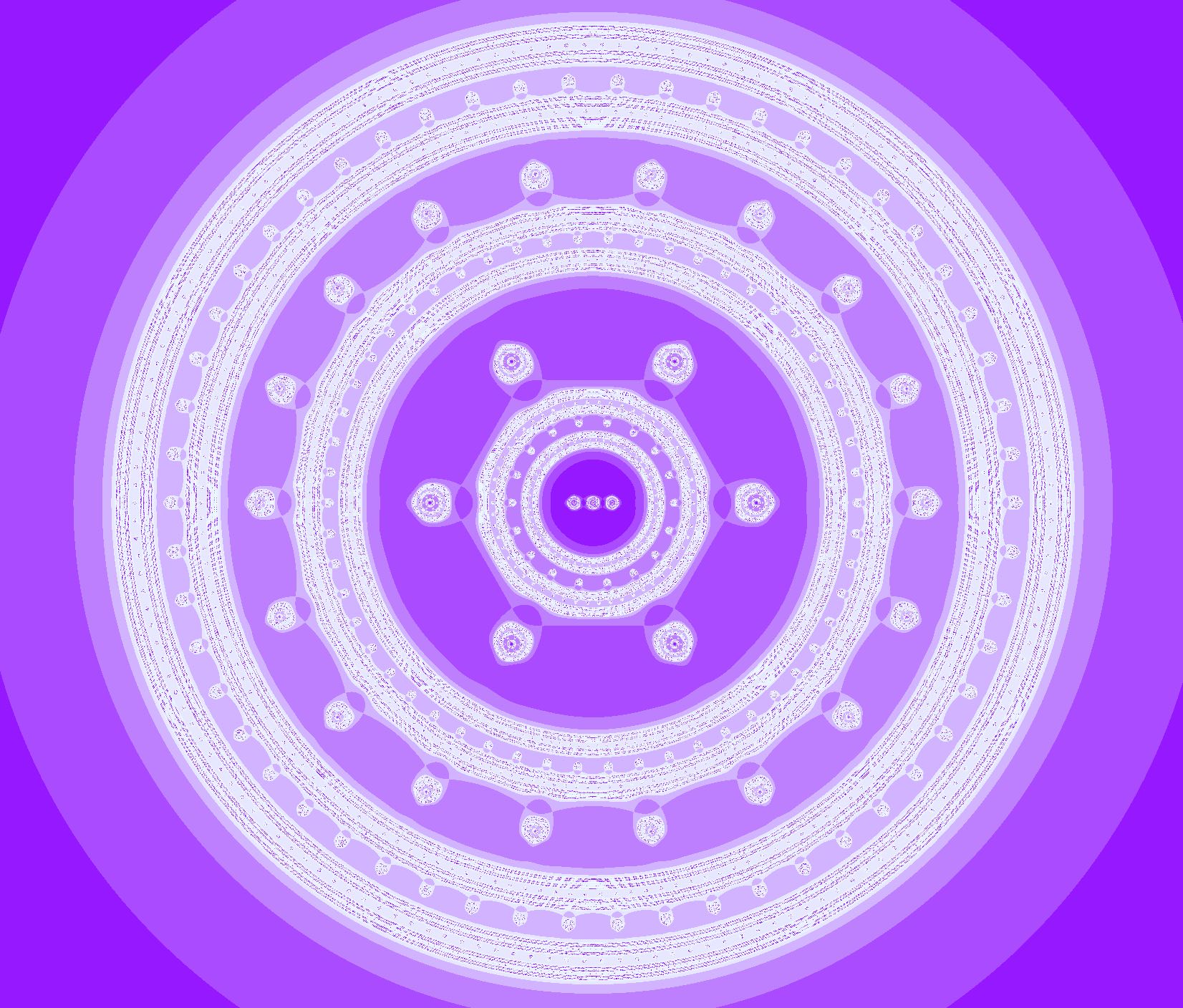}
        \caption{The Julia set of hyperbolic rational map realizes $F$ on the right. The algebraic formula is $f_n(z) = z^3+\frac{1}{(n^2z)^3} + \frac{1}{(n^2z-n)^3} +\frac{1}{(n^2z+n)^3}$ for sufficiently large $n$.}
    \end{subfigure}
    \caption{}
    \label{fig:kthlyConnectedExample}
\end{figure}

\subsection*{Examples of $k$-thly connected Fatou component.}
Examples with $k$-thly connected Fatou component can be constructed similar as in the previous examples.
Consider $\mathcal{T}_1$, and $\mathcal{T}_0$ as in Figure \ref{fig:kthlyConnectedExample}.
For any $j=0,1,2,3$, $F$ sends $[A_j,B_j]$ homeomorphically onto $[A_0, A_3]$ via expanding expanding with factor $3$.
A particular example of hyperbolic rational map with tree map $F$ is also given in Figure \ref{fig:kthlyConnectedExample}.

\subsection*{Other examples}
We refer the readers to Figures \ref{fig:BuriedSierpinski}, \ref{fig:BuriedSierpinskiTreeMap}, \ref{fig:SurgeryOnFixedPoint}, \ref{fig:JuliaSurgeryOnFixedPoint}, \ref{fig:JuliaSetOfDegree3Example} and \ref{fig:Degree3Example} for more examples.


\section{From rational map to tree mapping scheme}\label{sec:tms}
In this section, we will explain how post-critically finite hyperbolic tree mapping schemes arise as limit of stretch of hyperbolic rational map.

\subsection{Quasiconformally stretching}
We shall first introduce the construction of {\em quasiconformal stretch} for a hyperbolic rational map.
To start the discussion, we review some definitions.

\subsection*{Quasiconformal maps}
Let $U \subseteq \hat \C$. Let $K \geq 1$, and set $k = \frac{K-1}{K+1}$. 
A map $\phi: U \longrightarrow \phi(U)$ is called {\em $K$-quasiconformal} if 
\begin{enumerate}
\item $\phi$ is a homeomorphism;
\item the partial derivatives $\partial_z \phi$ and $\partial_{\bar z} \phi$ exist in the sense of distributions and belong to $L^2_{loc}$ (i.e. are locally square integrable);
\item and satisfy $|\partial_{\bar z} \phi| < k |\partial_z \phi|$ in $L^2_{loc}$.
\end{enumerate}
A map $f: U \longrightarrow f(U)$ is called {\em $K$-quasiregular} if $f = g\circ \phi$, where $\phi$ is quasiconformal and $g$ is holomorphic.

The following well-known result known as Shishikura's principle (see Lemma 1 in \cite{Shishikura87} or Proposition 5.2 and Corollary 5.4 in \cite{BrannerFagella14}):
\begin{prop}\label{prop:ShishikuraPrinciple}
Let $f:\hat \C \longrightarrow \hat \C$ be a proper $K$-quasiregular map.
Let $U \subseteq \hat \C$ be an open set.
Assume that
\begin{enumerate}
\item $f$ is holomorphic in $\hat \C - U$;
\item there exists $N$ such that $f^j(U) \cap U = \emptyset$ for all $j \geq N$.
\end{enumerate}
Then $f$ is quasiconformally conjugate to a rational map.
\end{prop}

\subsection*{Linear stretch of an annulus and a disk}
Let $s< r\leq 1$, and let $A_r(s)$ denote the round annulus $B(0,r) - \overline{B(0,s)}$.
Given two round annuli $A_r(s)$ and $A_r(s')$, we define a {\em linear stretch} between them as
\begin{align*}
\psi_{r, s\to s'} : A_r(s) &\longrightarrow A_r(s')\\
(\rho,\theta) &\mapsto (\frac{r-s'}{r-s}\rho+\frac{s'-s}{r-s}r, \theta)
\end{align*}
where $(\rho,\theta)$ is the polar coordinate.
We can extend the map continuously to unit disks $\psi_{r, s\to s'}: \Delta \longrightarrow \Delta$ by setting
$$
\psi_{r, s\to s'}(z) = \begin{cases} \frac{s'}{s}z &\mbox{if } z\in \overline{B(0,s)} \\ 
z & \mbox{if } z\in \Delta - B(0,r) \end{cases}.
$$
Note that $\psi_{r, s\to s'}$ is $K$-quasiregular, where $K = \max\{\frac{\log s}{\log s'}, \frac{\log s'}{\log s}\}$.

\subsection*{Stretch of a hyperbolic rational map}
Let $f$ be a hyperbolic rational map with finitely connected Fatou set.
By the Riemann-Hurwitz formula and the classification of Fatou components, this is equivalent to saying every periodic Fatou component is a disk.

Let $D$ be a disk Fatou component.
Let $\phi_D: \Delta \longrightarrow D$ be a Riemann mapping appropriately normalized (see \cite{Milnor12}).
Since $f$ is hyperbolic, there exist $s < r <1$ so that for any disk Fatou component $D$,
\begin{itemize}
	\item no post-critical points are in $\phi_D(\Delta - \overline{B(0, s)})$; and
	\item $\phi_D(B(0, r) - \overline{B(0, s)})$ is not recurrent.
\end{itemize}

We construct $\tilde{f}_n:\hat\C\longrightarrow \hat\C$ by replacing $f$ on any component of $U \subseteq f^{-1}(D)$ of a preimage of a post-critical disk Fatou component $D$ by
$$
\tilde{f}_n|_{U}:=\phi_{D}\circ\psi_{r,s\to e^{-n}}\circ \phi_{D}^{-1}\circ f|_U.
$$
For all sufficiently large $n$, $\tilde{f}_n$ is $n/\log s$-quasiregular.
Since $\phi_{D}(B(0, r) - \overline{B(0, s)})$ are not recurrent, by Shishikura's principal (Proposition \ref{prop:ShishikuraPrinciple}), $\tilde{f}_n$ is quasiconformally conjugate to a rational map $f_n$. We will call such construction a {\em quasiconformal stretch} or a {\em stretch} for short.

By construction, the separations of the post-critical set and the boundaries of Fatou components are going to infinity.
Thus if the Julia set $J(f)$ is connected, the stretch $f_n$ converges to the corresponding post-critically finite hyperbolic rational map $f_0$.

In the next few subsections, we shall prove that in general, $f_n$ converges to a post-critically finite hyperbolic tree mapping scheme $(F,R)$.
We will construct the tree map $F$ in \S \ref{subsec:tm}, and the rational mapping schemes are constructed \S \ref{subsec:ms}.
Finally, the proof of Theorem \ref{thm:cvg} is given in \S \ref{subsec:hp}.

\subsection{Tree map from the Shishikura tree}\label{subsec:tm}
In this subsection we construct the tree map for $f$.
Let $U_1,..., U_k$ be a list of multiply connected Fatou components that are mapped to disk Fatou components $D_1, ...., D_k$ under $f$.
Here $D_i$ may be the same as $D_j$.

Let $A(s) = \Delta - \overline{B(0,s)}$.
Since $\phi_{D_i}(A(s))$ contains no post-critical points of $f$, the preimage $f^{-1}(\phi_{D_i}(A(s))) \cap U_i$ is a finite union of annuli $A_{i, 1},..., A_{i, b_i}$.
Note that $b_i$ equals to the number of boundary components of $U_i$, and $f$ is a degree $d_{i,j}$ covering between $A_{i,j}$ and $\phi_{D_i}(A(s))$ where the degree $d_{i,j}$ equals to the degree of covering $f$ on the corresponding boundary component of $U_i$.
We first define the collection of all preimages of these annuli as
$$
\tilde{\mathscr{A}} := \{f^{-l}(A_{i,j}): i=1,..., k, j=1,..., b_i, l \geq 0\}.
$$
Note that by construction, each annulus $A \in \tilde{\mathscr{A}}$ is contained in some multiply connected Fatou component.

To get a finite combinatorial model, we first set up some notations.
Let $\mathscr{U}$ be the list of critical and post-critical multiply connected Fatou components and $\mathscr{D}$ be the list of critical and post-critical disk Fatou components.
We denote 
$$
\mathscr{F} := \mathscr{U} \cup \mathscr{D}
$$
as the collection of all critical and post-critical Fatou components.
We say a domain $E$ is {\em separating} for $\mathscr{F}$ if there are a pair $V, W\in \mathscr{F}$ and a pair of points $x \in \overline{V}$, $y\in \overline{W}$ that are in different components of $\hat\C-E$.
Note that by the above convention, any annulus $A \in \tilde{\mathscr{A}}$ that is contained in $U \in \mathscr{U}$ is separating.
We consider the sub-collection
$$
\mathscr{A} := \{A\in \tilde{\mathscr{A}}: A \text{ is separating for } \mathscr{F}\}.
$$

\begin{figure}[ht]
    \centering
    \resizebox{0.6\linewidth}{!}{
    \def\svgwidth{\columnwidth}
\begingroup%
  \makeatletter%
  \providecommand\color[2][]{%
    \errmessage{(Inkscape) Color is used for the text in Inkscape, but the package 'color.sty' is not loaded}%
    \renewcommand\color[2][]{}%
  }%
  \providecommand\transparent[1]{%
    \errmessage{(Inkscape) Transparency is used (non-zero) for the text in Inkscape, but the package 'transparent.sty' is not loaded}%
    \renewcommand\transparent[1]{}%
  }%
  \providecommand\rotatebox[2]{#2}%
  \newcommand*\fsize{\dimexpr\f@size pt\relax}%
  \newcommand*\lineheight[1]{\fontsize{\fsize}{#1\fsize}\selectfont}%
  \ifx\svgwidth\undefined%
    \setlength{\unitlength}{841.88976378bp}%
    \ifx\svgscale\undefined%
      \relax%
    \else%
      \setlength{\unitlength}{\unitlength * \real{\svgscale}}%
    \fi%
  \else%
    \setlength{\unitlength}{\svgwidth}%
  \fi%
  \global\let\svgwidth\undefined%
  \global\let\svgscale\undefined%
  \makeatother%
  \begin{picture}(1,0.70707071)%
    \lineheight{1}%
    \setlength\tabcolsep{0pt}%
    \put(0,0){\includegraphics[width=\unitlength,page=1]{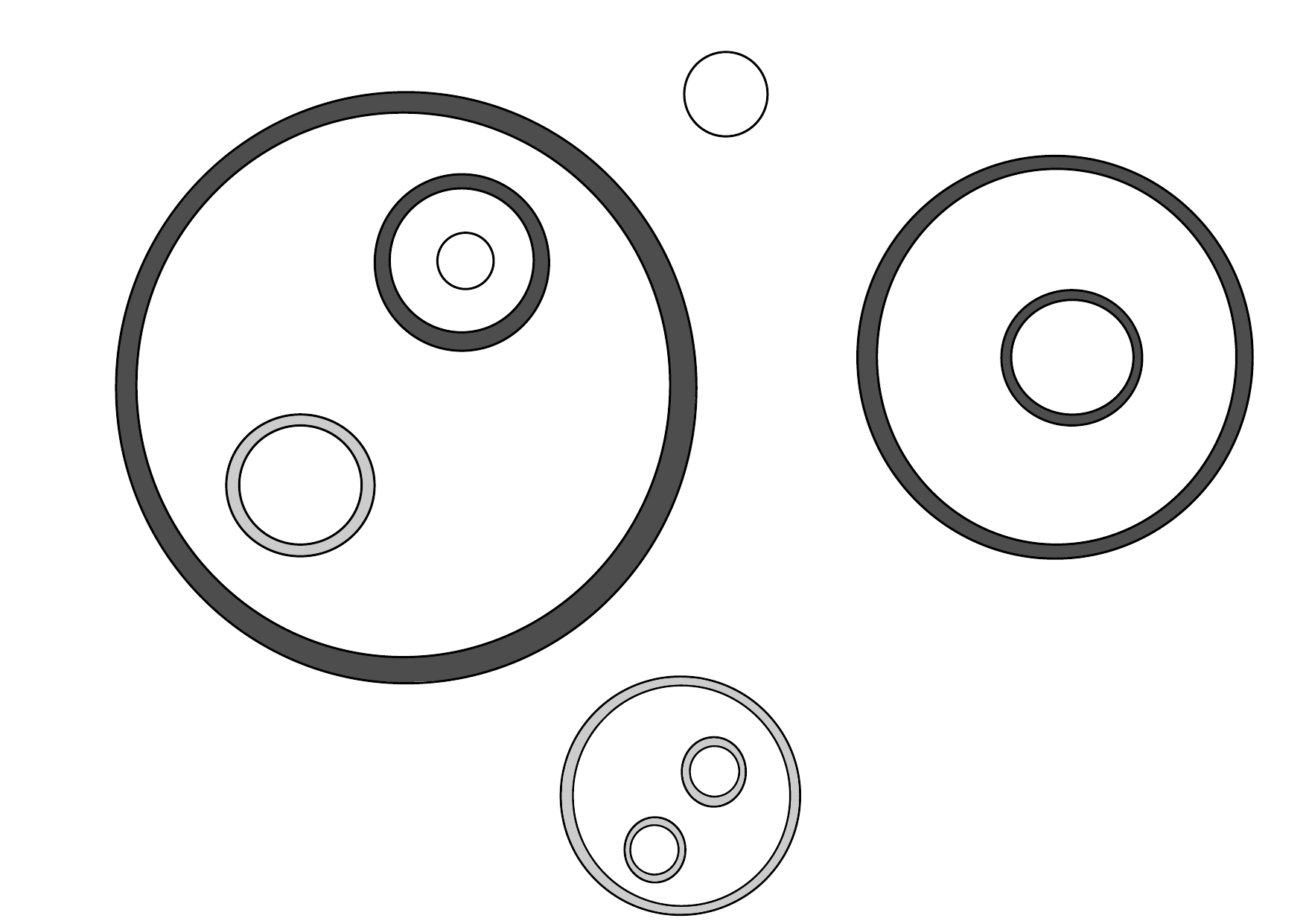}}%
    \put(0.77676464,0.5213767){\color[rgb]{0,0,0}\makebox(0,0)[lt]{\lineheight{1.25}\smash{\begin{tabular}[t]{l}$U$\end{tabular}}}}%
    \put(0.36475609,0.34065668){\color[rgb]{0,0,0}\makebox(0,0)[lt]{\lineheight{1.25}\smash{\begin{tabular}[t]{l}$U'$\end{tabular}}}}%
    \put(0.45511609,0.10853916){\color[rgb]{0,0,0}\makebox(0,0)[lt]{\lineheight{1.25}\smash{\begin{tabular}[t]{l}$U''$\end{tabular}}}}%
    \put(0.3456893,0.49945176){\color[rgb]{0,0,0}\makebox(0,0)[lt]{\lineheight{1.25}\smash{\begin{tabular}[t]{l}$D$\end{tabular}}}}%
    \put(0.54216014,0.6280691){\color[rgb]{0,0,0}\makebox(0,0)[lt]{\lineheight{1.25}\smash{\begin{tabular}[t]{l}$D'$\end{tabular}}}}%
  \end{picture}%
\endgroup%

    }
    \caption{An illustration of positions of $\mathscr{A}, \mathscr{U}, \mathscr{D}$. Here $D, D' \in  \mathscr{D}$, $U \in \mathscr{U}$ while $U', U'' \notin \mathscr{U}$. The dark grey annuli in the figures are elements of $\mathscr{A}$, while the light grey annuli are elements of $\tilde{\mathscr{A}} - \mathscr{A}$.}
    \label{fig:Ann}
\end{figure}

From this collection of annuli $\mathscr{A}$, one can construct a metric tree, known as the {\em Shishikura tree} first introduced in \cite{Shishikura89}.
We briefly summarize the construction here and refer the readers to \cite{Shishikura89} for details.

Let $A$ be an annulus, we define $A[z] = \phi_A(\{\zeta: |\zeta| = |\phi_A^{-1}(z)|\})$ for $z\in A$, where $\phi_A$ is any uniformizing map of a round annulus centered at $0$ to $A$.
Note that each $A[z]$ is a Jordan curve.
We denote the annulus $A(x,y) = \{z\in A: A[z] \text{ separates } x \text{ and } y\}$.
Now for any two points $x, y\in \hat\C$, we define a psuedo-metric
$$
\tilde{d}(x,y) = \sum_{A\in \mathscr{A}} m(A(x,y)),
$$
where $m(A(x,y))$ is the modulus of the annulus $A(x,y)$.
This pseudo-metric defines an equivalence relation: $x\sim_{\mathscr{A}} y$ if $\tilde{d}(x,y) = 0$.
The quotient of $\hat\C$ by this equivalence relation $\sim_{\mathscr{A}}$ gives a finite metric tree $\mathcal{T}_1$.
Let us denote the continuous projection by $\pi:\hat\C\longrightarrow \mathcal{T}_1$.
From our construction,
\begin{itemize}
\item If $K$ is a Julia component, then $\pi(K)$ is a single point;
\item If $D$ is a disk Fatou component, then $\pi(K)$ is a single point;
\item If $U$ is a separating multiply connected component for $\mathscr{F}$, then $\pi(U)$ is star-shaped;
\item If $U$ is a non-separating multiply connected component for $\mathscr{F}$, then $\pi(U)$ is a single point.
\end{itemize}
We remark that in the third case, if $\pi(U)$ is a $k$-star, then $U$ has at least $k$ boundary components, and $\pi(U - \bigcup\mathscr{A})$ gives the center of the $k$-star.

The rational map $f:\hat\C\longrightarrow \hat\C$ naturally induces a dynamics on $\mathcal{T}_1$ by
$$
f_*(x) = \pi \circ f(\partial \pi^{-1}(x))
$$
where $\partial \pi^{-1}(x)$ is the boundary of $\pi^{-1}(x)$ in $\hat\C$ (see Theorem 3.6 in \cite{Shishikura87}).
The induced map $f_*$ is piecewise linear with integral derivative, but may collapse some interval of $\mathcal{T}_1$ to a single point.
We are going to choose a subspace $\mathcal{T}_0\subseteq \mathcal{T}_1$ as the domain, so that $f_*$ restricts to an expanding map.

Recall $U_1,..., U_k$ is a list of multiply connected Fatou components that are mapped to disk Fatou components under $f$. 
We first show that the map $f_*$ is locally injective on $\pi(A)$ for  $A \in \mathscr{A}$ if $\pi(A) \subseteq \mathcal{T}_1 - \bigcup_{j=1}^k \pi(U_j)$.
\begin{lem}\label{lem:inj}
Let $A\in \mathscr{A}$ be an annulus, then either 
\begin{itemize}
\item $A \subseteq U_j$; or
\item $f(A) \in \mathscr{A}$.
\end{itemize}

In particular, if $A\in \mathscr{A}$ and $A \not\subseteq U_j$ for all $j=1,..., k$, then $f_*$ is injective on $\pi(A)$.
\end{lem}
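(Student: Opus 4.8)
The plan is to split according to whether $A$ equals one of the annuli $A_{i,j}$. If $A=A_{i,j}$ for some $i,j$, then $A\subseteq U_i$ and the first alternative holds. So suppose $A$ is a component of $f^{-l}(A_{i,j})$ with $l\geq 1$. I would first note that no component of any $f^{-m}(A_{i,j})$, $m\geq 0$, contains a critical point of $f$: such a point would, after $m$ iterations, land in $A_{i,j}$ as a post-critical point, contradicting the choice of $s$. Hence $f|_A$ is an unbranched covering onto $f(A)$, so $f(A)$ is again an annulus; moreover $f(A)$ is open and relatively closed in $f^{-(l-1)}(A_{i,j})$ (the boundary of a component of a full preimage maps into the boundary of that preimage), hence $f(A)$ is a whole component of $f^{-(l-1)}(A_{i,j})$ and $f(A)\in\tilde{\mathscr{A}}$. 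The same remark shows $A$ is then contained in no $U_j$ (otherwise $f(A)\subseteq f(U_j)=D_j$ would be an annulus inside a disk Fatou component, contradicting $f(A)\in\tilde{\mathscr{A}}$); in particular the hypothesis of the final clause puts us in exactly this case, and it remains to prove $f(A)\in\mathscr{A}$.

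To show $f(A)$ is separating for $\mathscr{F}$, write $\hat\C\setminus A=E_1\sqcup E_2$ and $\hat\C\setminus f(A)=F_1\sqcup F_2$ for the closed complementary disks, with $\gamma_i=\partial E_i$ and $\delta_i=\partial F_i$ labelled so that $f(\gamma_i)=\delta_i$, and set $d=\deg(f|_A)$. Choose $V,W\in\mathscr{F}$ and $x\in\overline V\cap E_1$, $y\in\overline W\cap E_2$ exhibiting that $A$ separates $\mathscr{F}$. Since the image of a Fatou component containing a critical (resp.\ post-critical) point is a Fatou component containing a post-critical point, $f(V),f(W)\in\mathscr{F}$ and $f(x)\in\overline{f(V)}$, $f(y)\in\overline{f(W)}$. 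The heart of the matter is the claim $f(E_1)\cap\operatorname{int}F_2=\emptyset$: the loop $f|_{\gamma_1}=f|_{\partial E_1}$ is a $d$-fold covering of $\delta_1$, so its winding number about any $w\notin\delta_1$ is $\pm d$ times that of $\delta_1$ about $w$, which is $0$ for $w\notin F_1$; on the other hand, by the argument principle that winding number counts, with multiplicity, the solutions of $f=w$ in $\operatorname{int}E_1$, so no point of $\operatorname{int}F_2$ lies in $f(\operatorname{int}E_1)$, while $f(\gamma_1)=\delta_1\subseteq F_1$ avoids $\operatorname{int}F_2$ as well. By symmetry $f(E_2)\cap\operatorname{int}F_1=\emptyset$. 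Hence $f(x)\in F_1$ and $f(y)\in F_2$ lie in distinct components of $\hat\C\setminus f(A)$, provided neither $f(x)$ nor $f(y)$ lies on the open annulus $f(A)$ itself; this can be arranged by moving $x$, resp.\ $y$, slightly within $\overline V$, resp.\ $\overline W$, off the components of $f^{-1}(f(A))$. Therefore $f(A)\in\mathscr{A}$.

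For the final clause, the covering $f|_A\colon A\to f(A)$ carries the foliation of $A$ by the Jordan curves $A[z]$ onto the corresponding foliation of $f(A)$, inducing a homeomorphism of their leaf spaces. Since $A,f(A)\in\mathscr{A}$, each contributes a strictly positive term to the pseudo-metric $\tilde d$, so $\pi$ identifies the leaf space of $A$ (resp.\ $f(A)$) with the arc $\pi(A)$ (resp.\ $\pi(f(A))$), and $\pi^{-1}$ of an interior point of $\pi(A)$ is a single curve $A[z]$. Unwinding the definition $f_*(t)=\pi\bigl(f(\partial\pi^{-1}(t))\bigr)$, one sees that $f_*$ restricted to $\pi(A)$ is exactly the induced leaf-space map $\pi(A[z])\mapsto\pi\bigl(f(A)[f(z)]\bigr)$, which is a homeomorphism onto $\pi(f(A))$; in particular it is injective.

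The step I expect to be the real obstacle is showing that $f(A)$ is still separating: the content is not that points of $\overline V$ and $\overline W$ fail to stay apart, but that one must identify \emph{which} complementary disk of $f(A)$ each of them enters, and this is precisely what the winding-number computation delivers; the only bookkeeping annoyance, a chosen witness point happening to land on $f(A)$, is disposed of by a small perturbation inside the relevant Fatou component.
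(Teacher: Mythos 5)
Your reduction to the case $A\subseteq f^{-l}(A_{i,j})$, $l\ge 1$, and the observation that $f|_A$ is an unbranched covering onto a full component $f(A)$ of $f^{-(l-1)}(A_{i,j})$ are fine (modulo the small slip that the choice of $s$ excludes post-critical points from the collars of the \emph{disk} components, so you must push one more iterate, into $\phi_{D_i}^{-1}(A(s))$, to rule out critical points). But the step you yourself identify as the heart of the matter is false. The claim $f(E_1)\cap\operatorname{int}F_2=\emptyset$ fails because your argument-principle count ignores the poles of $f$ (more precisely, the preimages of the far complementary region) lying in $\operatorname{int}E_1$: degree-theoretically, $\deg(f,\operatorname{int}E_1,w)$ is constant on the connected set $f(A)\cup F_2$, which avoids $f(\partial E_1)\subseteq\delta_1$, and this common value equals the number of preimages of points of $f(A)$ inside $E_1$ --- typically positive, since further components of $f^{-1}(f(A))$ are nested inside $E_1$ (this nesting is exactly the mechanism producing disconnected Julia sets here). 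Concretely, take the McMullen map $f(z)=z^2+\lambda/z^3$ with small $\lambda$, let $A_{1,1}$ be the outer collar of the critical annulus $U_1$, and let $A$ be the component of $f^{-1}(A_{1,1})$ on which $z^2$ dominates. Then $A\in\mathscr{A}$, $A\not\subseteq U_1$, the boundary $\gamma_1$ of the complementary component $E_1\ni 0$ maps onto the boundary $\delta_1$ of the component $F_1\ni 0$, and yet $0\in E_1$ maps to $\infty\in\operatorname{int}F_2$. Worse, the strategy itself (not just its justification) breaks there: the natural witnesses for $A$, a point of $\overline{T}$ (trap door) in $E_1$ and $\infty\in E_2$, push forward to $\overline{B_\infty}$ and $\infty$, which lie on the \emph{same} side of $f(A)=A_{1,1}$. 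So pushing forward a separating pair for $A$ does not produce a separating pair for $f(A)$, and your proof of the second alternative collapses. (A secondary issue: one boundary component of $A$ and of $f(A)$ is a Julia continuum, not a Jordan curve, so winding numbers of $f|_{\gamma_i}$ are not literally defined; this could be repaired with core curves or topological degree, but the repair runs into the fatal problem above.)

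The paper avoids this entirely with a soft argument using the Shishikura tree map $f_*$: if $f(A)\notin\mathscr{A}$, then (using that distinct annuli of $\tilde{\mathscr{A}}$ are disjoint) $\pi(f(A))$ is a single point of $\mathcal{T}_1$; but $f^{l}(A)=A_{i,j}$ is separating, so $\pi(f^l(A))=f_*^{l-1}(\pi(f(A)))$ would be a nondegenerate arc --- impossible, since $f_*$ is a well-defined point map on the tree. If you want a direct argument in the sphere, you should argue along these functorial lines (or choose witnesses by pulling back from the separating annulus $A_{i,j}=f^l(A)$) rather than pushing forward a fixed pair of witnesses. Your final paragraph on injectivity of $f_*$ on $\pi(A)$, via the induced map on leaf spaces of the curve foliations, is consistent with what the paper uses (linearity from Shishikura's Theorem 3.6 plus local injectivity), so no complaint there once $f(A)\in\mathscr{A}$ is actually established.
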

\begin{proof}
If $A \not\subseteq U_i$ for all $i=1,..., k$, then there exists $l\geq 1$ so that $f^l(A) \in U_j$ by construction.
Suppose for contradiction that $f(A) \notin \mathscr{A}$, then $f_*$ sends $\pi(A)$ to a single point $\pi(f(A))$.
Thus, $f_*^{l-1}$ sends the point $\pi(f(A))$ to an interval $\pi(f^l(A))$, which is a contradiction.
Therefore, $f(A) \in \mathscr{A}$.
\end{proof}

\subsection*{Construction of $\mathcal{T}_0 \subseteq \mathcal{T}_1$}
Let $\mathcal{B}_1$ denote the set of branch points of $\mathcal{T}_1$.
We call a branch point $b$ a {\em Fatou branch point} if $\pi^{-1}(b)$ is contained in a Fatou component, and a {\em Julia branch point} otherwise.
We denote this partition by
$$
\mathcal{B}_1 : = \mathcal{B}_1^F \cup \mathcal{B}_1^J.
$$

Let $U_b$ be the Fatou component associated to a Fatou branch point.
We denote $k_b$ as the smallest integer so that $f^{k_b}(U_b) \in \mathscr{U}$.
Then in the same way as in the proof of Lemma \ref{lem:inj}, $f^i(U_b)$ is separating for $\mathscr{F}$ for any $0\leq i\leq k_b$.
Let 
$$
\Omega := \{\pi(U): U\in \mathscr{U}\} \cup \{\pi(f^i(U_b)): b \in \mathcal{B}_1^F, 0\leq i\leq k_b\},
$$
then $\Omega$ is a finite union of star-shaped open subtrees of $\mathcal{T}_1$.
Each component $\mathcal{U} = \pi(U)$ of $\Omega$ contains a unique center $c=\pi(U-\bigcup\mathscr{A})$.
We denote the collection of all centers of $\Omega$ by $\mathcal{C}_\Omega$.
We define 
$$
\mathcal{T}_0 = \mathcal{T}_1-\Omega.
$$

\begin{remark}
Let $\mathcal{U}$ be a component of $\Omega$ with corresponding Fatou component $U$.
If $U \in \mathscr{U}$, then $\mathcal{U}$ is a $k$-star if and only if $U$ has $k$ boundary components, as there exists an annulus in $\mathscr{A}$ separating any pair of boundary components of $U$.
By adding finitely many annuli $A\in \tilde{\mathscr{A}}$ into the collection $\mathscr{A}$ if necessary, we assume this also holds if $U \notin \mathscr{U}$.
Such a modification is needed for $(F, R)$ to extend to a branched covering on $U$.
\end{remark}

\subsection*{Simplicial structures of $\mathcal{T}_1$}
Let 
$$
\mathcal{Q}:=\{\pi(\partial U): U\in \mathscr{F}\},
$$
and let
$$
\mathcal{P} = \mathcal{Q} \cup \partial \Omega.
$$
Note that the points in $\mathcal{P}-\mathcal{Q}$ correspond to the boundary components of $\pi(f^i(U_b))$ where $b \in \mathcal{B}_1^F$ such that $f^i(U_b)$ is {\em not} critical or post-critical.
We define the vertex set
$$
\mathcal{V}_1 := \mathcal{P} \cup \mathcal{B}_1 \cup \mathcal{C}_\Omega = (\mathcal{P} \cup \mathcal{B}_1^J) \sqcup \mathcal{C}_\Omega,
$$
and 
$$
\mathcal{V}_0 := \mathcal{P} \cup \mathcal{B}_1^J = \mathcal{V}_1 \cap \mathcal{T}_0.
$$
From the definition, we see $\mathcal{V}_1 = \mathcal{V}_0 \sqcup \mathcal{C}_\Omega$.

\subsection*{Fibers for the vertices}
For each component $\mathcal{U}$ of $\Omega$, there is a corresponding multiply connected Fatou component $U$.
Let $c$ be the center of $\mathcal{U}$, we shall call
$$
U_c := U - \bigcup \tilde{\mathscr{A}}\subseteq \pi^{-1}(c),
$$
the corresponding {\em core} of the Fatou component $U$.

We can assign a Julia component for every vertex $x\in \mathcal{V}_0$.

Indeed, if $x\in \mathcal{Q}$, then by definition, there is a critical or post-critical Fatou component $U$ and a component $X \subseteq \partial U$ such that $\pi(X) = x$.
$X$ is contained in some Julia component $K_x$, which we call the corresponding Julia component of $x$.
We note that the Julia component here is well-defined.
If another critical or post-critical Fatou component $U'$ with component $X' \subseteq\partial U'$ satisfying $\pi(X') = x$, then $X'$ is also contained in $K_x$.
As otherwise, $X$ and $X'$ are separated by some separating Fatou components for $\mathscr{F}$, which is a contradiction as $\pi(X) = \pi(X')$.
The same construction also works for $x\in \partial \Omega$. 

If $x\in \mathcal{B}_1^J$ is a Julia branch point.
Let $e$ be an edge adjacent to $x$, then either
\begin{itemize}
\item There exists an annulus $A \in \mathscr{A}$ with $A \subseteq \pi^{-1}(e)$ and a component $X\subseteq \partial A$ such that $X \subseteq \partial \pi^{-1}(x)$; or
\item There exists a sequence of nested annuli $A_i \in \mathscr{A}$ with $A_i \subseteq \pi^{-1}(e)$ accumulating to $X \subseteq \partial \pi^{-1}(x)$.
\end{itemize}
Since $x$ is a Julia branch point, in either case, $X$ is contained in some Julia component $K_x$.
The same argument as above shows that $K_x$ is well-defined.

Let $F: \mathcal{T}_0 \longrightarrow\mathcal{T}_1$ be the restriction of induced map $f_*$.
We prove the resctriction gives a tree map:
\begin{prop}\label{prop:tm}
The map $F: (\mathcal{T}_0, \mathcal{V}_0) \longrightarrow (\mathcal{T}_1, \mathcal{V}_1)$ is injective on edges.
\end{prop}
\begin{proof}
Since $f_*$ is continuous (see Lemma 3.5 in \cite{Shishikura87}), $F$ is also continuous.

Let $x\in \mathcal{V}_0$ be a vertex. If $x\in \mathcal{Q}$, then since the family $\mathscr{F}$ is forward invariant, we see $F(x) \in \mathcal{Q}$ as well. If $x\in \partial \Omega$, we see $F(x) \in \partial \Omega \cup \mathcal{Q}$ by our construction.
Thus it remains to consider the case $x\in \mathcal{B}_1^J-\mathcal{Q}$.
We claim that $F(x)$ is another branch point of $\mathcal{T}_1$, and hence $F(\mathcal{V}_0) \subseteq \mathcal{V}_1$.
\begin{proof}[Proof of the claim]
Let $K$ be the corresponding Julia component of $x$. If $f: K \longrightarrow f(K)$ has degree $1$, then $F$ locally sends edges adjacent to $b$ to different edges adjacent to $F(x)$, so $F(x)$ is another branch point.

Now consider the case $f: K \longrightarrow f(K)$ has degree $e \geq 2$.
We say a component $D$ of $\hat\C-K$ is a {\em local critical} component if $f|_{\partial D}$ has degree $\geq 2$.
The corresponding component $D'$ of $\hat\C-f(K)$ with boundary $f(\partial D)$ will be called a local critical value component.
We also say $D$ corresponds to an edge $e$ if $\pi^{-1}(e) \subseteq D$.
Since $x\notin \mathcal{Q}$, we conclude that every local critical component $D$ corresponds to an edge adjacent to $x$.
If there are three or more local critical value components, then $F(x)$ is a branch point.
Otherwise, there are exactly two local critical value components.
As we can extend $f$ to a branched covering of sphere of degree $e$, this means there are exactly two local critical components by degree counting and Riemann-Hurwitz formula.
Since $x$ is a branch point, there is another component $D$ corresponding to some edge adjacent to $x$.
By degree counting, the component bounded by $f(\partial D)$ is not a local critical value component, thus $F(x)$ is a branch point.
\end{proof}

To see $F$ is injective on edges of $\mathcal{T}_0$, we first note that $\mathcal{V}_0$ contains all branch points and $\pi(c)$ for all critical points $c$ of $f$.
Thus Theorem 3.6 in \cite{Shishikura87}  implies that $F$ is linear on each edge.
By Lemma \ref{lem:inj}, $F$ is locally injective on each edges of $\mathcal{T}_0$, so $F$ is injective on the edges of $\mathcal{T}_0$.
\end{proof}

For future reference, we also have the following
\begin{prop}
$\mathcal{V}_0$ is forward invariant.
\end{prop}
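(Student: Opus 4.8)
The plan is to upgrade the containment $F(V_0)\subseteq V_1$ obtained in Proposition~\ref{prop:tm} to $F(V_0)\subseteq V_0$ by a case analysis on $x\in V_0$, using the decomposition $V_1=(P\cup B_1^J)\sqcup C_\Omega=V_0\sqcup C_\Omega$; thus it is enough to show that $F(x)$ never lands in a center of a component of $\Omega$. For $x\in P=Q\cup\partial\Omega$ nothing new is needed: the proof of Proposition~\ref{prop:tm} already gives $F(x)\in Q\subseteq V_0$ when $x\in Q$ (using that $\mathscr{F}$ is forward invariant, so that a boundary component of $U\in\mathscr{F}$ maps under $f$ into a boundary component of $f(U)\in\mathscr{F}$, and applying Lemma~\ref{lem:inj} to track the associated separating annulus, which either stays in $\mathscr{A}$ or maps into a post-critical disk Fatou component) and $F(x)\in\partial\Omega\cup Q=P\subseteq V_0$ when $x\in\partial\Omega$. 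So the one remaining case is $x\in B_1^J\setminus P$; there the proof of Proposition~\ref{prop:tm} already shows that $F(x)=f_*(x)$ is a \emph{branch point} of $\mathcal{T}_1$, so $f_*(x)\in B_1=B_1^F\sqcup B_1^J$, and since $B_1^J\subseteq V_0$ it only remains to rule out $f_*(x)\in B_1^F$.

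For that step I would exhibit an entire Julia component inside the fiber $\pi^{-1}(f_*(x))$. By the construction of the corresponding Julia component $K_x$ of a Julia branch point $x$, there is a subset $X\subseteq K_x$ — a component of $\partial A$ for some separating annulus $A\in\mathscr{A}$ adjacent to $x$, or the accumulation set of a nested sequence of such — with $X\subseteq\partial\pi^{-1}(x)$. Since $f_*(x)=\pi\circ f(\partial\pi^{-1}(x))$ is by definition a single point, $\pi(f(X))=\{f_*(x)\}$, i.e. $f(X)\subseteq\pi^{-1}(f_*(x))$. As $f$ is open and continuous it maps the Julia component $K_x$ onto a Julia component $f(K_x)$, and $f(K_x)\cap\pi^{-1}(f_*(x))\supseteq f(X)\neq\emptyset$; because $\pi$ collapses every Julia component to a single point, this forces $\pi(f(K_x))=\{f_*(x)\}$, hence $f(K_x)\subseteq\pi^{-1}(f_*(x))$. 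Therefore $\pi^{-1}(f_*(x))$ contains points of the Julia set and is not contained in any Fatou component, so by the definition of a Fatou branch point $f_*(x)\notin B_1^F$. Combined with $f_*(x)\in B_1$ this gives $f_*(x)\in B_1^J\subseteq V_0$, finishing all cases.

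I do not expect a genuine obstacle here: the substance is already contained in Proposition~\ref{prop:tm}, Lemma~\ref{lem:inj}, and the fiberwise construction of the $K_x$. The only point requiring care is the bookkeeping at the combinatorics--geometry interface, namely to invoke precisely the property of $X$ guaranteed by the construction of $K_x$ (that $X$ lies simultaneously in the Julia component $K_x$ and in $\partial\pi^{-1}(x)$, so that pushing it forward by $\pi\circ f$ returns the single value $f_*(x)$), and, in the $x\in Q$ and $x\in\partial\Omega$ subcases, to check via Lemma~\ref{lem:inj} that a separating annulus persists under $f$ so that the image boundary component is again recorded by a vertex of $Q$ or $P$ rather than being absorbed into the center of a component of $\Omega$.
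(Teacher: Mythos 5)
Your proof is correct and follows essentially the same route as the paper: the case $x\in P$ is handled by the forward invariance of $\mathscr{F}$ and the construction of $P$, and for $x\in B_1^J$ one invokes the branch-point claim from the proof of Proposition \ref{prop:tm}. Your extra step showing that $\pi^{-1}(f_*(x))$ contains the Julia component $f(K_x)$, so that $f_*(x)\notin B_1^F$, correctly supplies the justification that the paper compresses into the remark that $F(x)$ is not a Fatou branch point.
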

\begin{proof}
If $x\in \mathcal{P}$, then $F(x) \in \mathcal{P}$ by construction. If $x\in \mathcal{B}_1^J$, then $F(x)$ is again a branch point as in the proof of Proposition \ref{prop:tm}. Note that $F(x)$ is not a Fatou branch point, thus $F(x) \in \mathcal{B}_1^J$. So $\mathcal{V}_0$ is forward invariant.
\end{proof}

\subsection{Rational mapping scheme from rescaling limits}\label{subsec:ms}
In this subsection, we will construct a rational mapping scheme to the tree map $F: \mathcal{T}_0\longrightarrow \mathcal{T}_1$.
This can be done by quasi-conformal surgery. 
Here, we present a different construction by taking rescaling limit.
Such a construction gives a hint on how to recover the rational map from the tree mapping schemes.

\subsection*{Limits of rational maps}
Let $g_n$ be a sequence of rational map of degree $d$, we say $g_n$ converges to $g$ or $g_n\to g$ if written in homogeneous coordinate $g_n = (P_n: Q_n)$, we have $(P_n: Q_n) \to (P:Q)=(Hp:Hq)$ where $H = \gcd(P,Q)$ and $g = (p:q)$.
Note that we allow the degree of $g_n$ to drop in the limit.
A zero of $H$ is called a {\em hole} of $g$, and the set of zeros of $H$ is denoted by $\mathcal{H}(g)$.

It is well-known that if $g_n\to g$, then $g_n$ converges compactly to $g$ on $\hat\C-\mathcal{H}(g)$ (see Lemma 4.2 in \cite{DeMarco05}), and conversely, if $g_n$ converges compactly to $g$ away from a finite set, then $g_n \to g$.

The strategy of constructing the rational mapping scheme is as follows: as we stretch the map, the Julia components corresponding to the vertices in $\mathcal{V}_0$ are separated by longer and longer annuli.
Thus, we can extract the local dynamics on the Julia components by choosing appropriate normalization and taking the limit.

To implement the strategy, we consider three cases: periodic points in $\mathcal{V}_0$, strictly pre-periodic  point in $\mathcal{V}_0$ and points in $\mathcal{C}_\Omega$.
The fibers corresponding to these three cases are periodic (non-degenerate) Julia components, strictly pre-periodic (non-degenerate) Julia components and the cores of Fatou components.

\subsection*{Periodic Julia components}
Let $x_0,..., x_{p-1} \in \mathcal{V}_0$ be periodic cycle of $F$.
Let $K:=K_{x_0}$ be the corresponding Julia component of $f$.
Since $K$ is non-degenerate, i.e., not a single point, there is a post-critically finite hyperbolic rational map $g_K$ with conjugating dynamics on $J(g_K)$ (see \S 5 in \cite{McM88}).
Let $f_n$ be a stretch for $f$.
We will show 
\begin{lem}\label{lem:rlpc}
	Let  $x_0,..., x_{p-1} \in \mathcal{V}_0$ be periodic cycle of $F$. There exist rescalings $M_{x_0,n}, ..., M_{x_{p-1},n} \in \PSL_2(\C)$ so that $M_{x_{i+1},n}^{-1} \circ f_n \circ M_{x_i,n} \to g_i$ of degree $\geq 1$, and $g_K = g_{p-1} \circ g_{p-2} \circ ... \circ g_0$.
\end{lem}
\begin{proof}
We first assume that $x$ is a fixed point, thus $K$ is invariant under $f$.
Let $D$ be a component of $\hat\C-K$, then either
\begin{itemize}
\item $f(D)$ is a disk with boundary $f(\partial D)$;
\item $f(D)=\hat\C$.
\end{itemize}
Let $D$ be a component with $f(D) = \hat\C$, and $D'$ be the component bounded by $f(\partial D)$.
Let $D_n, D'_n$ be the corresponding disks for the stretch $f_n$.
We are going to perform a particular quasiconformal surgery to get $S(f_n)$ which restrict to a quasiregular branched covering between $D_n$ and $D'_n$.

Let $A \subseteq D$ be an annulus with $\partial D$ as one of its boundary component such that $f:A \longrightarrow f(A)$ is a degree $e$ covering.
Let $A_n$ be the annulus associated to $f_n$.
Then by our construction, the modulus $m(A_n) \to \infty$.
Then there exists a sequence of $L_n \in \PSL_2(\C)$ so that $L_n(A_n)$ converges to $\C^*$.
Consider the annulus $\tilde{A} = B(0,2) - \overline{B(0, \frac{1}{2})}$, then $L_n^{-1}(\tilde{A})$ is {\em well-buried} in the annulus $A_n$, i.e., the modulus of each component $A_n- L_n^{-1}(\tilde{A})$ goes to infinity.
Therefore, there exists $L'_n \in \PSL_2(\C)$ such that $L'_n \circ f_n \circ L_n^{-1}$ converges uniformly to $z^e$ on $\tilde{A}$.
We now perform an interpolation of the values between $f_n$ and $z^e$ on the two boundary components of $\tilde{A}$ and get the desired quasiregular map $S(f_n)$.
Note that in this way, $S(f_n)$ is $(1+\epsilon_n)$-quasiregular with $\epsilon_n \to 0$.

Inductively, we perform such a surgery for all components $D$ with $f(D) = \hat\C$.
Since each annulus on which we perform the interpolation is mapped to an attracting periodic cycle of $S(f_n)$, so by Shishikura's principal (Proposition \ref{prop:ShishikuraPrinciple}), $S(f_n)$ is quasiconformally conjugate by $\phi_n$ to some rational map $g_n$.

By construction, $g_n$ is hyperbolic with conjugate dynamics on the the Julia set as $g_K$.
The dynamics on the Fatou components of $g_n$ converges to the post-critically fixed one, thus after possibly composing the quasiconformal conjugacy $\phi_n$ with a sequence in $\PSL_2(\C)$, we assume $g_n \to g_K$.

Since $\phi_n$ is $(1+\epsilon_n)$-quasiconformal with $\epsilon_n \to 0$, normalizing by $M_n \in \PSL_2(\C)$, we may assume $M_n^{-1} \circ \phi_n$ converges to the identity map uniformly.
Thus, $M_n^{-1} \circ S(f_n) \circ M_n \to g_K$ uniformly.

As each annulus on which we perform the interpolation is well-buried, it converges to a point under the normalization $M_n$. Therefore, there exists a finite set $S$ so that for any compact set $\Omega \subseteq \hat\C-S$, there exists an $N = N(\Omega)$ so that $M_n^{-1} \circ S(f_n)\circ M_n = M_n^{-1} \circ f_n\circ M_n$ for all $n \geq N$.
Thus $M_n^{-1} \circ f_n\circ M_n$ converges compactly to $g_K$ away from this finite set $S$, so $M_n^{-1} \circ f_n\circ M_n \to g_K$.

More generally, if $x$ has period $p$, similar arguments as above give us $p$ sequences $M_{0,n}, ..., M_{p-1,n} \in \PSL_2(\C)$ so that $M_{i+1,n}^{-1} \circ f_n \circ M_{i,n} \to g_i$ and the associated post-critically finite rational map $g_K$ is decomposed into $g_K = g_{p-1}\circ g_{p-2}\circ...\circ g_0$.
\end{proof}

Let $x_0,..., x_{p-1} \in \mathcal{V}_0$ be periodic cycle of $F$.
Fix a choice of rescalings $M_{x_i,n}$ for $x_i$ in Lemma \ref{lem:rlpc}.
The rescalings $M_{x_i,n}$ give the coordinates for the Riemann sphere $\hat\C_{x_i}$.
We will now assign the markings $\xi_{x_i}: T_{x_i} \mathcal{T}_1 \xhookrightarrow{} \hat\C_{x_i}$.
If $e$ is an edge adjacent to $x_i$, then $\pi^{-1}(e)$ is contained in some component $D$ of $\hat\C- K_i$.
Let $B$ be a disk compactly contained in $D$. 
Then the modulus of the associated annuli $D_n-\overline{B_n}$ for $f_n$ is going to infinity.
So $M_{x_i,n}^{-1}(B_n)$ converges to a point $z_e$.
We assign the marking 
$\xi_{x_i}(\vec{v}_e) = z_e$, and the rational mapping scheme 
$$
R_{x_i} = \lim M_{x_{i+1},n}^{-1} \circ f_n \circ M_{x_i,n}: \hat\C_{x_i} \longrightarrow \hat\C_{x_{i+1}}.
$$
It is easy to check the markings are compatible with the dynamics, and the holes are contained in $\Xi_{x_i} = \xi_{x_i}(T_{x_i}\mathcal{T}_1)$.

\subsection*{Strictly pre-periodic Julia components.}
Let $x\in \mathcal{V}_0$ be a strictly pre-periodic point of $F$. Let $K:=K_x$ be the corresponding Julia component.
By induction, we may assume $F(x)$ is a periodic point.
Let $K':=K_{F(x)}$ be the associated Julia component.
Assume that $f:K \longrightarrow K'$ has degree $e$.

Let $M_{F(x), n}$ be the rescaling for $F(x)$ constructed in Lemma \ref{lem:rlpc}.
Then $M_{F(x), n}^{-1}(K'_n)$ converges to $J(g_{K'})$.
Similarly to the proof of Lemma \ref{lem:rlpc}, we perform a surgery on each component $D$ of $\hat\C-K$ with $f(D) = \hat\C$ to get $S(f_n)$ where $S(f_n)$ is a uncritical quasiregular covering between $D_n$ and $D'_n$ where $D'_n \subset \hat\C-K_n'$ is bounded by $f(\partial D_n)$.
Unlike the previous case, we are not in the dynamical setting, i.e., we treat the domain and codomain differently.
By pulling back the complex structure under $S(f_n)$, we get a sequence of degree $e$ rational maps 
$$
g_n = M_{F(x), n}^{-1} \circ S(f_n) \circ \phi_n
$$ 
where $\phi_n$ is $(1+\epsilon_n)$-quasiconformal with $\epsilon_n \to 0$.
In the codomain, since the critical values $B_n$ of $g_n$ converges to the dynamical centers of each Fatou component of $g_K'$, so $(\hat\C, B_n)$ stay in the bounded part of the moduli space of punctured spheres. 
Thus, by normalizing $\phi_n$ with a sequence in $\PSL_2(\C)$ if necessary, $g_n$ converges to a rational map $g$ of the same degree $e$.

The same argument as in Lemma \ref{lem:rlpc} gives $M_{x,n} \in \PSL_2(\C)$ so that $M_{x,n}^{-1} \circ \phi_n$ converges to the identity map.
Then $M_{F(x), n}^{-1} \circ f_n \circ M_{x,n} \to g$.

We assign the rational mapping scheme 
$$
R_x:= \lim M_{F(x), n}^{-1} \circ f_n \circ M_{x,n}: \hat\C_x \longrightarrow \hat\C_{F(x)},
$$
and define the marking on $\xi_x: T_x \mathcal{T}_1 \xhookrightarrow{} \hat\C_x$ by pulling back the markings of $\hat\C_{F(x)}$ using the rational map $g$.
By construction, the marking is compatible with the dynamics, and the holes are contained in $\Xi_{x}$.

\subsection*{Cores of Fatou components}
Let $x\in \mathcal{C}_\Omega$ represent a center of a component $\mathcal{U}$ in $\Omega$.
Let $U_x$ represent the core of corresponding Fatou component, i.e., $U_x = \pi^{-1}(x) - \bigcup\tilde{\mathscr{A}}$.
Let $U_{x,n}$ be the corresponding core for $f_n$.
Then by construction, each $U_{x,n}$ is conformally equivalent to $U_x$.
Thus, there exists a rescaling $M_{x,n} \in \PSL_2(\C)$ so that $M_{x,n}^{-1}(U_{x,n})$ converges to $U_{x, \infty}$ with the same number of boundary components.

To define the markings, if $e$ is an edge adjacent to $x$, then it corresponds to a component $D$ of $\hat\C-U_x$.
In the same way, let $B$ be a disk compactly contained in $D$. 
Then the modulus of the associated annuli $D_n-\overline{B_n}$ for $f_n$ is going to infinity, so $M_{x,n}^{-1}(B_n)$ converges to a point $z_e$.

We now prove that 
\begin{lem}\label{lem:UU'}
If $U':=f(U)$ is a multiply connected Fatou component, then $(F, R)$ extends to a branched covering between $\mathcal{U}$ and $\mathcal{U}'$.
\end{lem}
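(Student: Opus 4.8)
The plan is to unpack the two clauses hidden in the phrase ``$(F,\mathbf f)$ extends to a geometric branched covering between $\mathcal U$ and $\mathcal U'$'': first, that the tree map $F$ admits a geometric branched-covering extension $\overline{\mathcal U}\longrightarrow\overline{\mathcal U'}$ compatible with the edge degrees already carried by $\partial\mathcal U$; second, that the mapping scheme $\mathbf f$ extends by a rational map $f_{c\to c'}\colon\hat\C_c\longrightarrow\hat\C_{c'}$ with compatible local degrees, where $c$ and $c'$ are the centers of $\mathcal U=\pi(U)$ and $\mathcal U'=\pi(U')$. As a preliminary I would record the bookkeeping fact that follows from the choice of $s$ (no post-critical points in $\phi_D^{-1}(A(s))$): the collection $\tilde{\mathscr A}$ contains no post-critical point, hence no critical value, hence no critical point, so $f$ carries each annulus of $\tilde{\mathscr A}$ onto another as an \emph{unbranched} covering, and every critical point of $f|_U$ lies in the core $U_c=U-\bigcup\tilde{\mathscr A}$.

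For the tree map: since $U'=f(U)$ is multiply connected, $U$ is none of the components $U_1,\dots,U_k$ that map onto disk Fatou components, so Lemma~\ref{lem:inj} gives $f(A)\in\mathscr A$ for every separating annulus $A\subseteq U$, and in fact $f$ sends every annulus of $\tilde{\mathscr A}$ inside $U$ to one inside $U'$. Hence $f$ descends to the induced map $f_*\colon\mathcal U\to\mathcal U'$, which restricts to $F$ on $\partial\mathcal U$; moreover $f(U_c)=U'_{c'}$ and $f(U_c)$ meets no annulus of $\tilde{\mathscr A}$, so $\pi f(\partial U_c)=\{c'\}$, i.e.\ $f_*$ sends the center $c$ to the center $c'$ and each prong $[c,a]$ of the star $\mathcal U$ linearly onto $[c',f_*(a)]$ with slope the covering degree $d_a$ of $f$ along the boundary component of $U$ indexed by $a$ (Theorem~3.6 of \cite{Shishikura87}, together with $m(A)=d_a\,m(f(A))$ for the boundary collars). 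Setting $\deg([c,a])=d_a$ and $\deg(c)=\deg(f|_U)$, the properness of the proper holomorphic map $f\colon U\to U'$ says exactly that over each boundary component of $U'$ the weights $d_a$ sum to $\deg(f|_U)$, which is the branched-covering identity~(\ref{eq:2}) at $c$; and the slopes $d_a$ agree on $\partial\mathcal U$ with the edge degrees $\deg_{z_{a,e}}f_{a\to F(a)}$ coming from the mapping scheme, since both equal the local degree of $f$ on the corresponding collar. Thus $F$ extends to a geometric branched covering $\overline{\mathcal U}\to\overline{\mathcal U'}$.

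For the mapping scheme I would mimic the ``cores of Fatou components'' construction. Let $\phi_n$ be the quasiconformal conjugacy carrying $\tilde f_n$ to $f_n$; its dilatation is supported on $\tilde{\mathscr A}$, so $\phi_n$ is conformal on both cores $U_c$ and $U'_{c'}$, and on $\phi_n(U_c)$ one has $f_n=\phi_n\circ f\circ\phi_n^{-1}$ because $\tilde f_n=f$ there. Choosing $L_n,L'_n\in\PSL_2(\C)$ with $L_n(\phi_n(U_c))\to U_{c,\infty}$ and $L'_n(\phi_n(U'_{c'}))\to U'_{c',\infty}$ in the Carath\'eodory sense (these normalizations define $\hat\C_c$ and $\hat\C_{c'}$), we get $L'_n\circ f_n\circ L_n^{-1}=(L'_n\phi_n)\circ f\circ(L_n\phi_n)^{-1}$ on $L_n(\phi_n(U_c))$, a composition of conformal maps whose flanking factors converge to conformal isomorphisms onto $U_{c,\infty}$ and $U'_{c',\infty}$; so $L'_n\circ f_n\circ L_n^{-1}$ converges locally uniformly on $U_{c,\infty}$. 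Because every annulus of $\tilde{\mathscr A}$ is stretched to infinite modulus, under the normalization $L_n$ the remainder of $\hat\C$ collapses to the marked points of $\hat\C_c$, so $L'_n\circ f_n\circ L_n^{-1}$ in fact converges away from a finite set to a rational self-map $f_{c\to c'}$ of $\hat\C_c$ of degree $\deg(f|_U)$, sending the marked point toward a boundary component $a$ of $U$ to the marked point toward its image with local degree $d_a$ (the collar being unbranched). This supplies the required extension of $\mathbf f$ with compatible local degrees, and with the previous paragraph it proves the lemma.

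The step I expect to be the main obstacle is this last one: organizing the behavior of $f_n$ under the core normalization $L_n$ so that the algebraic limit is a genuine rational map of degree exactly $\deg(f|_U)$ --- i.e.\ verifying that the stretched annuli together with the neighboring cores and Julia components all shrink to the finitely many marked points, with no degree escaping --- and then reconciling the local degree of this limit at each marked point with the degree already attached to that edge by the independently constructed rescaling limit at the Julia component in $\partial\mathcal U$. The facts that $\phi_n$ is conformal on the cores and that $\tilde{\mathscr A}$ avoids the post-critical set (so the boundary collars are unbranched coverings) are what make this reconciliation work; converting them into the clean convergence statement above is the technical heart of the argument.
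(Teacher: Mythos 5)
Your proposal follows essentially the same route as the paper's proof: the tree-map extension is obtained from Shishikura's induced map $f_*$ (with the edge slopes given by the covering degrees on the boundary collars), and the extension $f_{c\to c'}$ of the mapping scheme is produced exactly as in the ``cores of Fatou components'' construction, by taking the algebraic limit of $M_n\circ f_n\circ L_n^{-1}$ under the core normalizations. The paper states these two steps tersely, while you supply the degree-counting at the center and the convergence details (conformality of $\phi_n$ on the cores, collapse of the stretched annuli to marked points), so this is the same argument carried out with more care rather than a different one.
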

\begin{proof}
Note that $f:U\longrightarrow U'$ is a degree $e$ branched covering, thus $U'$ is separating by Lemma \ref{lem:inj} and corresponds to a component $\mathcal{U}'$ of $\Omega$.
We can extend $F:\mathcal{U} \longrightarrow \mathcal{U}'$ using the Shishikura's tree map $f_*$.
Let $M_{x,n}$ and $M_{F(x),n}$ be the two rescalings for $x$ and $F(x)$ given as above. 
Then $M_{F(x),n}^{-1} \circ f_n \circ M_{x,n}$ converges to a rational map $g$ of degree $e$ compatible with the markings.
\end{proof}

\subsection*{Convergence to tree mapping schemes}
Let $(F, R)$ be the tree mapping scheme constructed as above.
Note that different choices in the above construction give conjugate tree mapping schemes.
By construction, we have
\begin{lem}\label{lem:ctms}
	The stretch $f_n$ converges to the tree mapping scheme $(F,R)$.
\end{lem}
\begin{proof}
	Let $a\in \mathcal{V}_1$. Let $M_{a, n}\in \PSL_2(\C)$ be the rescalings constructed in \S \ref{subsec:ms}.
	Since each annulus that we perform the interpolation converges to a marked point in $\Xi_a$, the holes of $R_a = \lim M_{F(a),n}^{-1} \circ f_n \circ M_{a, n}$ are contained in $\Xi_a$. 
	Thus $M_{F(a),n}^{-1} \circ f_n \circ M_{a, n}$ converges compactly to $R_a$ on $\hat\C_a - \Xi_a$.
	It also follows from our definition that $M_{b,n}^{-1} \circ M_{a,n}$ converges to the constant map $\xi_a(\vec v_b)$ where $\vec v_b \in T_a\mathcal{T}_1$ is the tangent vector in the direction of $b$.
	Thus, $f_n$ converges to the tree mapping scheme $(F,R)$.
\end{proof}

\subsection{Hyperbolic and post-critical finiteness}\label{subsec:hp}
We shall now prove
\begin{prop}\label{prop:hpcf}
The tree mapping scheme $(F, R)$ is irreducible, post-critically finite and hyperbolic.

Moreover, if we start with a different map $g$ in the same hyperbolic component of $f$, we get topologically conjugate tree mapping schemes.
\end{prop}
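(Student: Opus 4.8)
The plan is to treat the two assertions separately. For the first I will run through the six conditions of Definition \ref{defn:hpcf} and then the three conditions of Definition \ref{defn:irreducible}, reading each one off the dynamics of $f$; for the second I will show that the entire construction of \S\ref{sec:tms}--\S\ref{subsec:ms} is natural under quasiconformal conjugacy. Conditions (i) and (ii) come essentially for free from Shishikura's setup. Compatibility of local degrees holds because an edge $e=[a,b]$ of $\mathcal{T}_0$ is the $\pi$-image of an annulus (or a nested family) of $\mathscr{A}$ separating the Julia components $K_a,K_b$, and the two marked points attached to $e$ carry the degree of $f$ on the corresponding boundary curves of $K_a$ and of $K_b$, which coincide since $f$ maps the annulus properly onto its image. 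Geometricity of $F$ is precisely Shishikura's description of the induced map $f_*$: because $V_0$ was chosen to contain all branch points and all $\pi$-images of critical points, $f_*$ is linear on each edge of $\mathcal{T}_0$ with derivative the covering degree (Theorem~3.6 of \cite{Shishikura87}).

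For conditions (iii)--(vi) I will translate the Fatou/Julia dynamics. By Riemann--Hurwitz and hyperbolicity every multiply connected Fatou component is strictly preperiodic and eventually maps onto a disk Fatou component, and star-shapedness of $\Omega$ was shown in \S\ref{sec:tms}. Given a component $\mathcal{U}=\pi(U)$ of $\Omega$: if $f(U)$ is again multiply connected, Lemma \ref{lem:UU'} supplies the geometric branched covering extension of case (a); if $f(U)$ is a disk $D$, then $f$ collapses the relevant boundary behaviour of $U$ onto the B\"ottcher centre of $D$, an interior point corresponding to no tangent direction, which is the exposed point $z_v\in\hat\C_v-\mathcal{M}_v$ of case (b); and every $\mathcal{U}$ eventually reaches case (b). The exposed critical points of $\mathbf{f}$ are exactly the critical points of $f$ lying in disk Fatou components, viewed inside the rescaling-limit spheres; since $f$ is hyperbolic and was surgered to be post-critically finite on disk components, such a critical point has orbit inside disk Fatou components (hence exposed orbit) and is iterated into a superattracting cycle, giving (iv); the same reasoning applied to the marked point $z_a$ on the edge of $\mathcal{U}$ at $a\in\partial\mathcal{U}$, whose $\mathbf{f}$-orbit enters a disk Fatou component once $U$ maps to a disk, gives (v). For (vi) I will invoke Proposition \ref{prop:nonE}: each edge of $\mathcal{T}_0$ is the $\pi$-image of an annulus of $\tilde{\mathscr{A}}$, and by the definition of $\tilde{\mathscr{A}}$ together with Lemma \ref{lem:inj} some forward $f$-image of that annulus lies in a multiply connected Fatou component $U_i\in\mathscr{U}$, so the corresponding $F$-iterate of the edge lies in the component $\pi(U_i)$ of $\Omega$ and thus escapes $\mathcal{T}_0$.

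Irreducibility is then checked against Definition \ref{defn:irreducible}. Item (1): $\mathcal{T}_1$ is by construction the Shishikura tree of the separating family $\mathscr{A}$, so every end is the $\pi$-image of a boundary curve of a critical or post-critical Fatou component; one then identifies the two descriptions of $Q$, whose forward-invariant convex hulls agree because the post-critical Fatou components are exactly the forward orbit of the critical ones. Item (2): $Q$ and $\partial\Omega$ are already forward invariant (shown in \S\ref{sec:tms}), and $B_1^J$ differs from the branch set $B_0$ of $\mathcal{T}_0$ only within $P=Q\cup\partial\Omega$. Item (3): a primitive component is either critical (its Fatou component carries a critical point, or $f(U)$ is a disk so $F$ collapses $\mathcal{U}$ to a point) or is the $\pi$-image of a multiply connected component at a Fatou branch point, hence contains a branch point of $\mathcal{T}_1$ — the remaining ``purely post-critical'' possibility being excluded since a disk Fatou component cannot cover a multiply connected one. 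The passage to the unique irreducible model is the reduction described after Definition \ref{defn:irreducible}.

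For the last assertion, if $g$ lies in the same hyperbolic component as $f$ then, after performing the same post-critically-finite surgery on disk components, structural stability within the component yields a quasiconformal homeomorphism $\varphi\colon\hat\C\to\hat\C$ with $\varphi\circ f=g\circ\varphi$ (a holomorphic motion of the Julia set assembled with the conjugacy of the Fatou dynamics). Then $\varphi$ carries $\mathscr{F}_f$ to $\mathscr{F}_g$, hence $\mathscr{A}_f$ to $\mathscr{A}_g$, hence descends to a homeomorphism $\bar\varphi\colon\mathcal{T}_{1,f}\to\mathcal{T}_{1,g}$ taking vertices to vertices and $\Omega_f$ to $\Omega_g$, so it restricts to a simplicial isomorphism $\mathcal{T}_{0,f}\to\mathcal{T}_{0,g}$ conjugating $F_f$ to $F_g$. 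On the local spheres, $\varphi$ conjugates $f^p$ on a periodic Julia component $K$ to $g^p$ on $\bar\varphi(K)$, so McMullen's surgery makes $g_K$ and $g_{\bar\varphi(K)}$ quasiconformally, hence by rigidity of hyperbolic post-critically finite rational maps conformally, conjugate; the analogous statements hold at the strictly preperiodic vertices and at the cores of the components of $\Omega$, and all these conjugacies respect markings because $\varphi$ respects tangent directions. Together with the choice-independence of the construction noted in \S\ref{subsec:ms}, this produces the required topological equivalence. I expect the main obstacle to be exactly this last step: one must produce the global quasiconformal conjugacy so that it genuinely conjugates the Fatou dynamics and not merely the action on the Julia set, and then verify that the induced maps on the rescaling-limit spheres conjugate the mapping schemes and not just the individual local models — this is where rigidity and the choice-independence of \S\ref{subsec:ms} are essential; a secondary, purely bookkeeping difficulty is matching the two descriptions of $Q$ and $P$ in Definition \ref{defn:irreducible} with the objects built directly from the Fatou components.
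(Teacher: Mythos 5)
Your verification of the six conditions of Definition \ref{defn:hpcf} and of irreducibility follows the same route as the paper, and in fact supplies details the paper waves off (notably that a multiply connected Fatou component mapping onto a disk must contain a critical point, and that a disk component cannot cover a multiply connected one, which is what makes condition vi) and item (3) of Definition \ref{defn:irreducible} work). One small imprecision in (i): the equality of the two boundary degrees along an edge is not a consequence of properness alone, but of the fact that the annulus lying over the open edge contains no critical points (critical points project into $V_0$), so that $f$ restricts there to an unbranched covering; this is the paper's argument and you should state it.

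The genuine gap is in the ``moreover'' part. You propose to produce, after normalizing both maps to be post-critically finite on disk components, a global quasiconformal homeomorphism $\varphi$ of $\hat\C$ with $\varphi\circ f=g\circ\varphi$, and you flag this as the main obstacle --- but this step fails: two maps in the same hyperbolic component are in general \emph{not} topologically conjugate on their Fatou sets, even after that normalization, because critical orbit relations inside the multiply connected Fatou components (for instance, where a critical value of $f\colon U\to D$ lands relative to the post-critical structure of the disk $D$) are not constant on the component. The paper says exactly this in the remark immediately after the proof: the geometric extensions over components of $\Omega$ depend on the choice of the map in the hyperbolic component precisely because the Fatou dynamics there are not conjugate. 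The resolution is not to build the global conjugacy but to avoid needing it: maps in the same hyperbolic component are J-conjugate (holomorphic motion and the $\lambda$-lemma), and a conjugacy on the Julia sets already suffices, since the data entering topological equivalence of tree mapping schemes --- the simplicial isomorphism of the trees and the local models $f_{a\to F(a)}$ for $a\in V_0$, with their markings --- is read off from the Julia components, their separation structure, and the dynamics on them; the extensions over $\Omega$ enter only through the existence condition iii), not through the equivalence. Once $\varphi$ is only required to conjugate the dynamics on the Julia sets (and to match corresponding Fatou components, which the holomorphic motion provides), the rest of your argument --- transport of $\mathscr{F}$ and of the tree, conjugation of the local models via rigidity of hyperbolic post-critically finite maps --- goes through, and this is the paper's one-line proof.
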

\begin{proof}
Let $e=[x,y]$ be an edge in $\mathcal{T}_0$. Then there is a component $D$ of $\hat\C-K_x$ that corresponds to $e$.
The local degree at $z_e \in \hat\C_x$ for the rational mapping scheme $R$ equals to the degree of $f|_{\partial D}$.
Similarly, $e$ also corresponds to a component $D'$ of $\hat\C-K_y$, and the local degree at $z_e \in \hat\C_y$ is the degree of $f|_{\partial D'}$.
Let $A = D\cap D'$ be the annulus.
By our construction, the projection $\pi(c)$ of any critical point $c$ is contained in $\mathcal{V}_0$, so $A$ contains no critical points. 
Thus $f$ is a covering between $A$ and $f(A)$.
In particular, $(F, R)$ satisfies the condition i).
The condition ii) also follows from the above observation (see Theorem 3.6 in \cite{Shishikura87}).

To prove the condition iii) on extension, we note that any component $\mathcal{U}$ corresponds to some multiply connected Fatou component $U$. Then either $f(U)$ is multiply connected or is a disk. In the first case, $(F, R)$ extends to a branched covering by Lemma \ref{lem:UU'}, and thus corresponds to case (b) in the condition iii).
It's easy to verify that the second case gives case (a) in the condition iii).
The moreover part comes from the fact that any multiply connected Fatou component is eventually mapped to some disk Fatou component.

A critical point for the rational mapping scheme is exposed if and only if it corresponds to a critical point in the disk Fatou component.
On the other hand, given an edge $e$ adjacent to $x$, the corresponding component $D$ of $\hat\C-K_x$ contains other Julia components, thus $D$ is not a disk Fatou component for $f$.
Therefore, conditions iv) and v) follow from dynamics on the associated Fatou components for $f$.
Since each annulus $A$ in $\mathscr{A}$ is eventually mapped inside of a disk Fatou component, we conclude any edge of $\mathcal{T}_0$ is eventually mapped outside of $\mathcal{T}_0$, so the condition vi) is satisfied by Proposition \ref{prop:nonE}.
By our construction, $(F, R)$ is irreducible.

If $g$ is another map in the same hyperbolic component of $f$, then $f$ and $g$ are quasiconformally conjugate near the Julia set. which gives the topological conjugacy between the tree map schemes.
\end{proof}
We remark that the extension to a component of $\mathcal{U}$ of $\Omega$ depends on the choice of the rational map in the hyperbolic component we start with.
Indeed, such a component  $\mathcal{U}$ corresponds to a Fatou component, and the dynamics on the Fatou set are in general {\em not} topologically conjugate for two rational maps in the same hyperbolic component.
This is the main reason why in the definition of tree mapping schemes $(F, R)$, we only record the tree map $F$ and rational mapping schemes $R$ on a sub-forest $\mathcal{T}_0$.

\begin{proof}[Proof of Theorem \ref{thm:cvg}]
	Combining Lemma \ref{lem:ctms} and Proposition \ref{prop:hpcf}, we have the result.
\end{proof}

\section{From tree mapping scheme to rational map}\label{sec:tmsr}
In this section, we show any irreducible post-critically finite hyperbolic tree mapping scheme $(F, R)$ arises as such a limit by constructing a hyperbolic rational map whose stretch converges to $(F, R)$.

The general idea for the construction is simple.
Given a tree mapping scheme, we can thicken the edges to long annuli and get a Riemann sphere $\hat\C$ (\S \ref{sec:te}).
We construct a model map $\tilde{f}$ on $\hat\C$ from the tree mapping scheme (\S \ref{sec:qrm}).
Since the tree mapping scheme is post-critically finite hyperbolic, we show that the model map can be chosen to be holomorphic away from some non-recurring open set $U$, and quasiregular on $U$.
By Shishikura's principle, the model map is quasiconformally conjugate to a rational map $f$ (\S \ref{sec:qrmq}).
By construction, it is easy to verify the stretch of $f$ converges to $(F,R)$, and different choices of the construction give J-conjugate rational maps (\S \ref{sec:JConj}).

\subsection{Riemann sphere from thickening the edges.}\label{sec:te}
Let $(F, R)$ be an irreducible post-critically finite hyperbolic tree mapping scheme.
Let $a\in \mathcal{V}_0$. 
Then $a$ is either periodic or strictly pre-periodic.
If $a$ is periodic of period $p$, we define the {\em Julia set} $J_a$ as the Julia set of $R^p|_{\hat\C_a}$.
If $a$ is strictly pre-periodic with $F^l(a)$ periodic, we define the Julia set $J_a$ as the pull back of $J_{F^l(a)}$ by $R^l|_{\hat\C_a}$.
The complement $\hat\C_a - J_a$ will be called the {\em Fatou set} in $\hat\C_a$.

Since each marked point is eventually mapped to a periodic critical cycle, there exists $r_0>0$ so that for any $a\in \mathcal{V}_0$, and for any marked point $z\in \Xi_a \subset\hat\C_a$ or any critical point $z \in \hat\C_a$,
$$
R^k|_{\hat\C_a} (B_{S^2}(z,r_0))
$$
is contained in the union of Fatou components of $\hat\C_{F^k(a)}$ for all $k$.
Here $B_{S^2}(z,r_0)$ is the ball of radius $r_0$ centered at $z$ with the standard spherical metric on $S^2 \cong \hat\C_a$.

Let $a\in \mathcal{V}_1$ and let
$$
\Pi_a := \hat\C_a - \bigcup_{z\in \Xi_a} B_{S^2}(z,r_0).
$$
Shrink $r_0$ if necessary, we may assume any exposed critical point or exposed post-critical point is contained in $\Pi_a$.

Given an edge $e=[a,b]$ of $\mathcal{T}_1$ of length $L$, we define
$$
\Pi_{e,n}:= B(0,1) - \overline{B(0, \exp(-2\pi nL))}
$$
as the round annulus of modulus $nL$.
The components of the boundary of $\Pi_{e,n}$ are marked by $a$ and $b$.
A point $x\in e$ can be identified as a round circle centered at $0$, denoted by  $C_{x,n}\subseteq\Pi_{e,n}$.
Indeed, if $d(a,x) = L_1$ and $d(x,b) = L_2$, then the circle $C_{x,n}$ is the unique one that has modulus $nL_1$ and $nL_2$ to the boundary of $\Pi_{e,n}$ associated to $a$ and $b$ respectively.

Let $e=[a,b]$ be an edge. We can glue $\Pi_{e,n}$ with $\Pi_a$ along their boundaries (see Figure \ref{fig:ConstructionRational}).
Assume without loss of generality that the boundary component $\partial B(0,1)$ of $\Pi_{e,n}$ corresponds to $a$.
We can identify $B_{S^2}(z,r_0)$ with $B(0,1)$ by a M\"obius transformation sending $z$ to $0$. 
We fix such an identification, which is unique up to post composing with a rotation.

By gluing all $\Pi_a$ and $\Pi_{e,n}$ in this way, we get a complex manifold 
$$
	\hat\C_{\Pi,n} = \bigcup_{a\in \mathcal{V}_1} \Pi_a \cup \bigcup_{e\in \mathcal{E}_1} \Pi_{e,n}.
$$
By the uniformization theorem, $\hat\C_{\Pi,n}$ is a Riemann sphere.

We remark that for $a\in \mathcal{V}_1$, there are $\nu(a)$ different circles associated to $a$, corresponding to $\partial \Pi_a$.
Since the circle domain $\Pi_a$ stays the same for all $n$, the modulus of the annulus between any pair of components of $\partial \Pi_a$ stays the same, in particular, is uniformly bounded.
We make a choice and let $C_{a,n} = C_a$ denote a particular boundary component of $\partial \Pi_a$.

Since the gluing maps are M\"obius, round circles in $\Pi_a$ or $\Pi_{e,n}$ are round circles in $\hat\C_{\Pi,n}$.
The following lemma follows from our construction:
\begin{lem}\label{lem:approxM}
There exists $M>0$, so that for any $n$, and for any pairs $x, y \in \mathcal{T}_1$ with $d(x,y) = L$, the modulus of the annulus $A_n$ between $C_{x,n}$ and $C_{y,n}$ in $\hat\C_{\Pi,n}$ satisfies
$$
nL \leq m(A_n) \leq nL+M.
$$
\end{lem}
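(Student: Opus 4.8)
The plan is to decompose $A_n$ along the geodesic $[x,y]\subseteq\mathcal{T}_1$ into a finite chain of sub-annuli and to exploit the fact that moduli add under conformal gluing along boundary circles.

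First I would set up the decomposition. The geodesic $[x,y]$ passes through a finite sequence of vertices $a_1,\dots,a_k$ with $k\le |V_1|$, and traverses parts of edges whose lengths sum to $d(x,y)=L$. Cutting $\hat\C_{\Pi,n}$ along the round circles $C_{a_1,n},\dots,C_{a_k,n}$ then exhibits $A_n$ as a chain $B_0\cup P_1\cup B_1\cup\dots\cup P_k\cup B_k$, glued consecutively along common boundary circles, where each $B_i$ is a round sub-annulus of an edge annulus $\Pi_{e,n}$ (with $B_1,\dots,B_{k-1}$ the full annuli $\Pi_{e_i,n}$), so that $\sum_i m(B_i)=n\,d(x,y)=nL$, and each $P_j$ is the circle domain $\Pi_{a_j}$ together with the $\nu(a_j)-2$ complementary regions of $\hat\C_{\Pi,n}\setminus(C_{x,n}\cup C_{y,n})$ bounded by the boundary circles of $\Pi_{a_j}$ not lying on $[x,y]$. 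Here one should check that these complementary regions are indeed topological disks contained in $A_n$ — they carry the subtrees hanging off $a_j$ away from the geodesic — so that each $P_j$ is an annulus embedded in $A_n$ whose two boundary circles are the two "through" circles of $\Pi_{a_j}$.

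Next, the bound. Since $\Pi_{a_j}$ is one of only finitely many fixed circle domains, and every curve in $\Pi_{a_j}$ joining its two through-circles is also a curve in $P_j$ joining the two boundary components of $P_j$, monotonicity of extremal length gives $m(P_j)\le (\text{extremal distance between the two through-circles in }\Pi_{a_j})\le M_0$, a constant depending only on $(F,\mathbf{f})$; hence $\sum_{j} m(P_j)\le |V_1|\,M_0=:M$. Because $A_n=B_0\cup P_1\cup\dots\cup B_k$ is a chain of annuli glued along common boundary circles inside the single Riemann surface $\hat\C_{\Pi,n}$, all the gluings are conformal; uniformizing each piece as a flat cylinder of a fixed circumference, a conformal identification of boundary circles is forced to be an isometry of those circles, so gluing two such cylinders produces a flat cylinder whose height, hence modulus, is the sum. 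Inducting along the chain yields $m(A_n)=\sum_i m(B_i)+\sum_j m(P_j)=nL+\sum_j m(P_j)$, which lies between $nL$ and $nL+M$. (The lower bound alone is in any case immediate from Gr\"otzsch's inequality applied to the disjoint essential sub-annuli $B_0,\dots,B_k$.)

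The step demanding the most care is the chain decomposition itself: verifying that the capped vertex pieces $P_j$ are genuinely annuli embedded in $A_n$, that the complementary subtree regions are disks, and that all the identifications take place within the conformal structure of $\hat\C_{\Pi,n}$, together with the uniform bound $m(P_j)\le M_0$. Once these are in place, additivity of moduli under conformal gluing along boundary circles gives the estimate directly — and in fact the exact value $m(A_n)=nL+\sum_{j=1}^{k} m\big(\Pi_{a_j}^{\mathrm{cap}}\big)$.
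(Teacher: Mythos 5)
Your decomposition of $A_n$ into edge sub-annuli $B_i$ and capped vertex pieces $P_j$ is the right picture, the uniform bound $m(P_j)\le M_0$ via monotonicity of extremal length is correct, and the lower bound $nL\le m(A_n)$ via Gr\"otzsch is fine. The gap is in the upper bound: moduli are \emph{not} additive under gluing annuli along a common boundary curve, and your justification --- that a conformal identification of the boundary circles of two flat cylinders ``is forced to be an isometry of those circles'' --- is false. The identification of the two boundary circles is the one induced by the embeddings of the pieces into $\hat\C_{\Pi,n}$; read in the flat uniformizations of the two pieces it is an analytic circle homeomorphism, and it is a rotation only in very special cases (for a vertex piece $P_j$, the uniformization of the capped circle domain onto a flat cylinder distorts the boundary circle nontrivially in general). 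Gluing flat cylinders along a non-isometric boundary identification produces an annulus whose modulus is in general strictly \emph{larger} than the sum of the heights: this is precisely the equality analysis in Gr\"otzsch's inequality, where equality forces the cut curves to be round circles in the uniformizing coordinate of $A_n$, which the boundary circles of the $\Pi_{a_j}$ are not. So your chain decomposition only yields $m(A_n)\ \ge\ nL+\sum_j m(P_j)$, a (sharper) lower bound; it gives no control from above, and the claimed exact formula $m(A_n)=nL+\sum_j m\bigl(\Pi_{a_j}^{\mathrm{cap}}\bigr)$ is false in general. Since the inequality $m(A_n)\le nL+M$ is the substantive half of Lemma \ref{lem:approxM}, the proof is incomplete exactly where the work is needed.

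To close the gap one must bound the excess of $m(A_n)$ over the chain sum, and this requires a genuine estimate rather than additivity. The paper's route: because the gluings are M\"obius, round circles (hence round annuli) make sense in $\hat\C_{\Pi,n}$, and any round annulus essential in $A_n$ has modulus at most $nL+M_1$; by Theorem 2.1 of \cite{McM94}, an annulus of sufficiently large modulus contains an essential round annulus whose modulus differs from $m(A_n)$ by at most a universal additive constant, and combining the two statements gives $m(A_n)\le nL+M$. Alternatively, one can write $m(A_n)=1/\lambda(\Gamma_{\mathrm{sep}})$ for the family of curves separating $C_{x,n}$ from $C_{y,n}$ and bound $\lambda(\Gamma_{\mathrm{sep}})$ from below by exhibiting a single explicit admissible metric (flat on the edge annuli, a fixed metric on the finitely many vertex pieces, with care for curves dipping into caps and junctions); either way, some extremal-length or round-annulus argument is unavoidable, and the additivity shortcut is not available.
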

\begin{proof}
If both $x, y$ are in the same edge, then $m(A_n) = nL$.
In general, the largest round annulus bounded by $C_{x,n}$ and $C_{y,n}$ has modulus bounded by $nL+M_1$ for some constant $M_1$.
Since every annulus with modulus not too small can be uniformly approximated by the round annulus (see Theorem 2.1 in \cite{McM94}), we conclude the result.
\end{proof}

\begin{figure}[ht]
    \centering
    \resizebox{0.7\linewidth}{!}{
    \def\svgwidth{\columnwidth}
    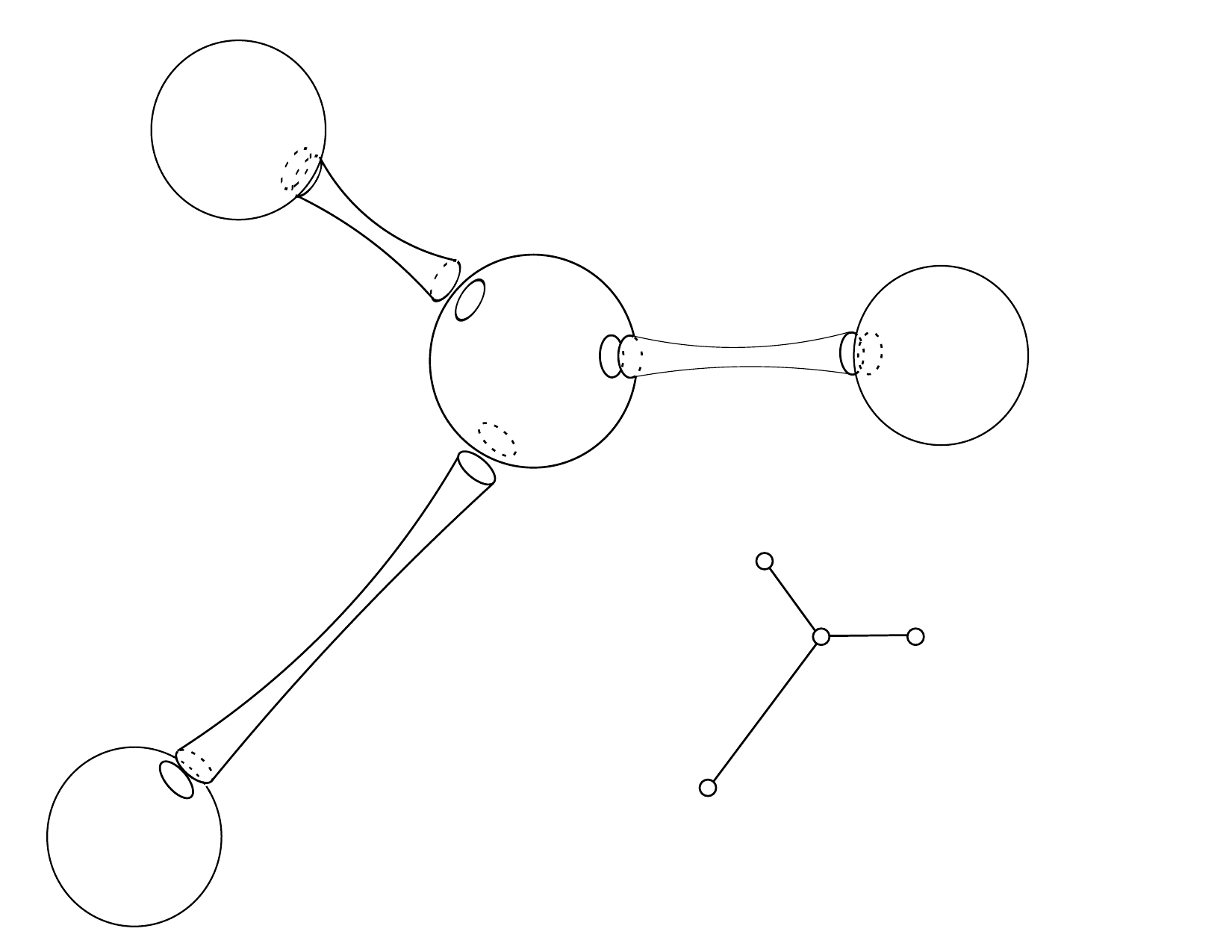

    }
    \caption{The construction of Riemann sphere by `thickening' the edges of the tree. The annulus $\Pi_{e,n}$ for $e\in E_1$ becomes longer as $n\to \infty$, but $\Pi_x$ for $x\in \mathcal{V}_1$ stays the same.}
    \label{fig:ConstructionRational}
\end{figure}

Let $e=[a,b]$ be an edge in $\mathcal{T}_0$.
Since the non-escaping set $\mathcal{J}$ of the tree map $F: \mathcal{T}_0 \longrightarrow \mathcal{T}_1$ is totally disconnected,
we can further partition the edge $e = [a,b]$ into alternating pieces 
$$
a\leq \alpha_1(e) < \beta_1(e) \leq... \leq \alpha_{k_e}(e) < \beta_{k_e}(e) \leq b
$$ 
such that for each $(\alpha_i(e), \beta_i(e))$, there exists a $q\geq 0$ and a component $\mathcal{U}$ of $\Omega = \mathcal{T}_1 - \mathcal{T}_0$ so that
$F^q((\alpha_i(e),\beta_i(e)))\subseteq \mathcal{U}$ and $F^q(\alpha_i(e)), F^q(\beta_i(e)) \in \mathcal{V}_1$.
Moreover, we assume that image $F([a, \alpha_1(e)+\delta_e])$ (and $F([\beta_{k_e}(e)-\delta_e, b])$) is contained in an edge of $\mathcal{T}_1$ for some $\delta_e > 0$.

By adding the (finite) orbits of such small intervals $(\alpha_i(e), \beta_i(e))$ in the partition, we may assume that such intervals are forward invariant (whenever it is defined).
More precisely, for any edge $e$ and $i$, either $F((\alpha_i(e), \beta_i(e)))$ is contained in $\Omega$ or there exist $e'$ and $i'$ so that 
$$
F((\alpha_i(e), \beta_i(e))) = (\alpha_{i'}(e'), \beta_{i'}(e')).
$$

\subsection{The quasiregular model}\label{sec:qrm}
We can now construct a quasiregular model map $\tilde{f}_n$ on $\hat\C_{\Pi, n}$.
The construction is done in $4$ steps.
The idea is to construct the map on $\Pi_a$ using the rational map $R_a$, and then interpolate along edges and the gaps $\Omega$ (see Figure \ref{fig:ConstructionF}).

Let us fix a small number $\epsilon>0$ so that $[\alpha_i(e), \beta_i(e)]$ has length at least $4 \epsilon$ for all $e$ and $i$.
We also assume $4\epsilon < \delta_e$ for all $e$ and the length of each edge in $\mathcal{T}_1 - \mathcal{T}_0$ is at least $4\epsilon$.

\begin{figure}[ht]
    \centering
    \resizebox{0.7\linewidth}{!}{
    \def\svgwidth{\columnwidth}
    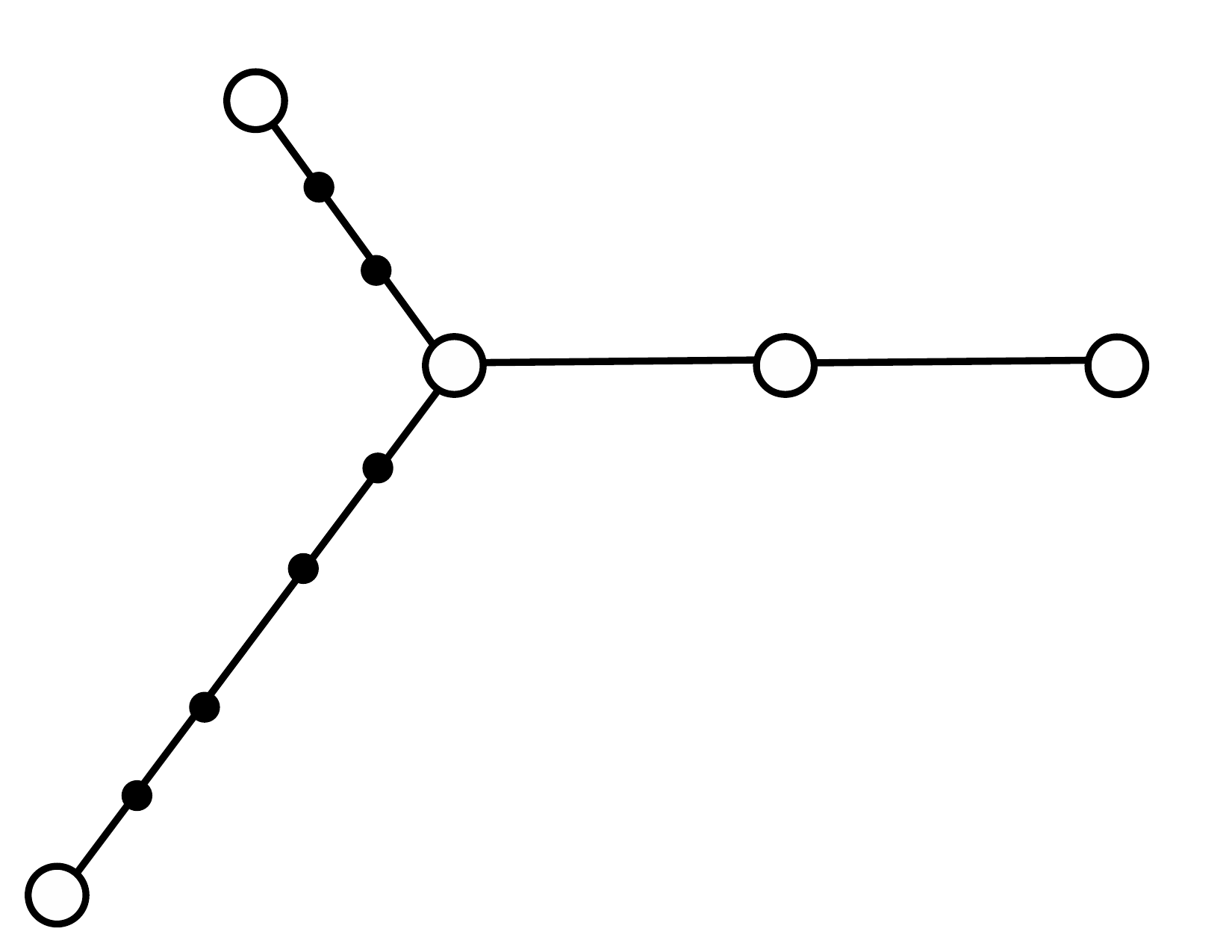

    }
    \caption{An illustration of the construction of $\tilde f_n$. Here $\Omega = (a,f)$. In step 1, we define the map using rational maps on the region bounded by the red arcs. In step 2, we define the map as holomorphic coverings on the region bounded by the blue arcs. The interpolations in step 3 fill in the gaps on the edges of $\mathcal{T}_0$, and step 4 fill in the gaps in $\Omega$.}
    \label{fig:ConstructionF}
\end{figure}

{\bf Step 1:}
Let $a\in \mathcal{V}_0$, we first define the map on a neighborhood of $\Pi_a$.
Let $\Phi_a: \hat\C_a \longrightarrow \hat\C_{\Pi, n}$ and $\Phi_{F(a)}: \hat\C_{F(a)} \longrightarrow \hat\C_{\Pi, n}$ be the conformal maps extending the inclusion maps of $\Pi_a$ and $\Pi_{F(a)}$ to $\hat\C_{\Pi,n}$.
Let $e_1,..., e_k$ be a list of edges of $\mathcal{T}_1$ adjacent to $a$.
Assume that the local degree at the tangent vector associated to $e_j$ is $d_j$.
By switching the orientation of the edges, we assume $\alpha_1(e_1),..., \alpha_1(e_k)$ are the closest points in the partition defined above to $a$. Let $x_j = \alpha_1(e_j) + \epsilon/d_j \in (\alpha_1(e_j), \beta_1(e_j))$.

If $x_j\in \mathcal{T}_0$, we let $y_j = F(x_j)$ and $C_{y_j, n}$ be the corresponding circle in $\hat\C_{\Pi, n}$. 
Otherwise, $x_j\in \mathcal{U}$ for some component $\mathcal{U}$ of $\Omega$. 
If $F$ extends geometrically to $\mathcal{U}$, then we let $y_j = F(x_j)$ and $C_{y_j, n}$ be the corresponding circle in $\hat\C_{\Pi, n}$. 
Otherwise, $R_a(z_j) = z_v \in \Pi_{F(a)} \subseteq \hat\C_{F(a)}$ where $z_j \in \Xi_a$ corresponds to $e_j$. 
We define $C_{y_j, n}$ as the unique round circle in $B_{S^2}(z_v, r_0)$ centered at $z_v\in\Pi_{F(a)} \subseteq \hat\C_{\Pi, n}$ so that the modulus of the annulus between $\partial B_{S^2}(z_v, r_0)$ and $C_{y_j, n}$ is $\epsilon n$.
For sufficiently large $n$, there exists a component $\tilde{C}_{x_j, n}$ of $(\Phi_{F(a)} \circ f_{a\to F(a)} \circ \Phi_{a}^{-1})^{-1}(C_{y_j, n})$ that approximates $C_{x_j, n}$.
More precisely,  $\tilde{C}_{x_j, n}$ and $C_{x_j,n}$ are contained in some annulus with bounded modulus as essential closed curves.
Let $\Pi_{x_1,..., x_k,n} \subseteq \hat\C_{\Pi, n}$ be the domain bounded by $\tilde{C}_{x_j, n}$.
We define 
$$
\tilde {f}_n = \Phi_{F(a)} \circ R_a \circ \Phi_{a}^{-1}: \Pi_{x_1,..., x_k,n} \longrightarrow \hat\C_{\Pi, n}.
$$

{\bf Step 2:}
We now define the map on a collection of annuli on $\Pi_{e,n}$ by appropriate holomorphic coverings.
More precisely, let $\alpha< \beta\leq\alpha' < \beta'$ be adjacent partitions in the edge $e$ of $\mathcal{T}_0$.
Let $x=\beta - \epsilon/d_e$ and $y = \alpha' + \epsilon/d_e$, where $d_e = \deg(e)$.
Let $A_n$ be the annulus bounded by $C_{x, n}$ and $C_{y, n}$.
By Lemma \ref{lem:approxM}, we can find $x'_n$ and $y'_n$ so that 
\begin{enumerate}
\item The annulus $A'_n$ bounded by $C_{x'_n, n}$ and $C_{y'_n, n}$ has modulus $d_e m(A_n)$;
\item $x'_n \to F(x)$ and $y'_n \to F(y)$ as $n\to \infty$.
\end{enumerate}
We define $\tilde {f}_n$ as a degree $d_e$ holomorphic covering between $A_n$ and $A'_n$.

{\bf Step 3:}
We now interpolate using quasiregular maps and define the map on the gaps in $\Pi_{e,n}$.
Let $(\alpha, \beta)$ be a partition in an edge $e$ of $\mathcal{T}_0$.
Let $x= \alpha+\epsilon/d_e$ and $y = \beta - \epsilon/d_e$.
Note that $x, y$ correspond to boundary points in the previous two steps, and let $A_n$ be the annulus bounded by the corresponding curves in the previous two steps.
Then $\tilde{f}_n$ is defined on $\partial A_n$ by continuity.
Let $A'_n$ be the annulus bounded by $\tilde{f}_n(\partial A_n)$ and we define $\tilde {f}_n$ as a degree $d_e$ quasiregular covering map between $A_n$ and $A_n'$ interpolating the boundary values.

{\bf Step 4:}
Finally, we interpolate using quasiregular maps and define the map on the gaps associated to $\Omega$.

Let $\mathcal{U}$ be a component of $\Omega$.
Let $\{a_1,..., a_k\}$ be its boundary.
By condition iii), either $(F,R)$ extends to a branched covering, or $F(\partial U)$ is a single point.
We will assume we are in the first case, and the second case can be treated in a similar way.

Let $\mathcal{W}$ be a component of $\Omega$ so that $F$ extends to a map between $\mathcal{U}$ and $\mathcal{W}$.
Let $x_j = a_j+\epsilon/d_e$ and $\tilde{C}_{x_j,n}$ be the Jordan curve that is used in the first step.
Let $b_i\in \partial \mathcal{W}$ with adjacent edge $e \subseteq \mathcal{W}$.
Let $y_i\in e$ with distance $\epsilon$ away from from $b_i$.
By the condition iii) and the construction in the first step, there exists a circle $C_{y_i,n} \subseteq \Pi_{e, n}$ so that
$C_{y_i,n} = \tilde{f}_n(\tilde{C}_{x_j,n})$ for all $j$ with $F(a_j) = b_i$ (and hence $F(x_j) = y_i$).
Let $U_n$ and $W_n$ be the domain bounded by $\tilde{C}_{x_j,n}$ and $C_{y_i, n}$ respectively.
We define $\tilde {f}_n$ as a proper quasiregular map between $U_n$ and $W_n$ interpolating the boundary values.
Note that the existence of such a quasiregular interpolation is guaranteed by the condition iii).
More concretely, let $R_c$ be the extension of the rational mapping scheme to the center $c\in \mathcal{U}$. The quasiregular map can be defined as a perturbation of $R_c$.

\subsection{The quasiregular model is quasirational.}\label{sec:qrmq}
After the above four steps, we get a sequence of quasiregular map $\tilde{f}_n : \hat\C_{\Pi, n} \longrightarrow \hat\C_{\Pi,n}$. We shall now prove that for sufficiently large $n$, $\tilde{f}_n$ is quasiconformally conjugate to a rational map.

Let $z \in \hat\C_a$ be an exposed periodic critical point of period $p$.
By the first step of the construction and our choice of $r_0$, there exists a disk $D \subseteq \Pi_a$ containing $B_{S^2}(z,r_0) \subseteq \Pi_a \subseteq \hat\C_{\Pi, n}$ on which $\tilde{f}_n^p: D \longrightarrow D$ is a uni-critical branched covering.
(Here the spherical metric comes from the inclusion of $\Pi_a \xhookrightarrow{} \hat\C_{\Pi, n}$.)
We call such a disk $D$ a {\em periodic disk Fatou component} of $\tilde{f}_n$.

\begin{prop}\label{prop:nonR}
Let $U_n$ be the annulus or multiply connected domain in the third step or the fourth step.
There exists an $N$ large enough so that for all $n\geq N$, each $U_n$ is eventually mapped compactly into the periodic disk Fatou component of $\tilde{f}_n$.
In particular, $U_n$ is non-recurring under $\tilde{f}_n$.
\end{prop}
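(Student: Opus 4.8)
The plan is to trace the forward orbit of $U_n$ under $\tilde f_n$, showing that it is governed by the combinatorics of $F$ on $\Omega$ and is driven into a shrinking topological disk around an exposed point, from which the exposed critical dynamics (Conditions iv) and v) of Definition~\ref{defn:hpcf}) together with the choice of $R$ push it compactly into a periodic disk Fatou component of $\tilde f_n$. First I would observe that after finitely many iterates the orbit of $U_n$ enters the Step~4 machinery: if $U_n\subseteq\Pi_{e,n}$ is a Step~3 annulus arising from a gap $(\alpha_i(e),\beta_i(e))$, then by the choice of the partition one has $F^q\big((\alpha_i(e),\beta_i(e))\big)\subseteq\mathcal U$ for some $q\ge 0$ and some component $\mathcal U$ of $\Omega$, with $F^q(\alpha_i(e)),F^q(\beta_i(e))\in V_1$; since $\tilde f_n$ on the edge annuli built in Steps~2 and~3 is a geometric model of the expanding action of $F$ on edges, $\tilde f_n^{\,q}(U_n)$ is contained in the Step~4 domain $U_n(\mathcal U)$ attached to $\mathcal U$ (after at most one further iterate). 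Hence it suffices to treat a Step~4 domain $U_n=U_n(\mathcal U)$.

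Next I would iterate Condition iii). Either $F$ extends to a geometric branched covering $\mathcal U\to\mathcal W$ onto another component $\mathcal W$ of $\Omega$, or $F(\partial\mathcal U)$ is a single vertex $v\in V_0$ with $\mathbf f(z_a)=z_v$ an exposed point of $\hat\C_v-\mathcal M_v$ for every $a\in\partial\mathcal U$; moreover every component of $\Omega$ eventually reaches the second alternative. Since $\Omega$ has only finitely many components, the chain $\mathcal U\to\mathcal W\to\cdots$ produced by iterating the first alternative is finite and ends at a component $\mathcal U'$ of the second type. As the Step~4 maps $U_n\to W_n$ are proper and onto, composing along the chain gives $\tilde f_n^{\,\ell}(U_n)=W_n^{(b)}$ for some $\ell$, where $W_n^{(b)}$ is the Step~1 topological disk around $z_v$ bounded by the round circles $C_{y_i,n}\subseteq B_{S^2}(z_v,R)\subseteq\Pi_v$ sitting at modulus $\epsilon n$ from $\partial B_{S^2}(z_v,R)$; in particular $W_n^{(b)}$ shrinks to $\{z_v\}$ as $n\to\infty$.

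It then remains to use the exposed critical dynamics. By Conditions iv) and v) the $\mathbf f$-orbit of $z_v$ is exposed and there is $\ell_0$ with $\mathbf f^{\,\ell_0}(z_v)=w$ on a periodic exposed critical cycle of some period $p$, so $w$ is a superattracting periodic point of $\mathbf f^{\,p}$ lying in a periodic disk Fatou component $D$ of $\tilde f_n$. I would fix once and for all a disk $D_v\ni z_v$ with $\overline{D_v}$ in the interior of $\Pi_v$, small enough that $\mathbf f^{\,j}(\overline{D_v})$ lies in the interior of $\Pi_{F^j(v)}$ and inside a disk Fatou component of $\hat\C_{F^j(v)}$ for $j\le\ell_0$, and that $\mathbf f^{\,\ell_0}(\overline{D_v})\Subset D$; such a $D_v$ exists because the finite exposed orbit of $z_v$ stays at positive distance from $\mathcal M$ (by the choice of $R$) and because of the superattracting dynamics at $w$. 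For $n$ large enough $W_n^{(b)}\subseteq D_v$, and since $\tilde f_n$ coincides with $f_{F^j(v)\to F^{j+1}(v)}$ (conjugated by the uniformisations $\Phi$) on the Step~1 neighbourhood $\Pi_{x_1,\dots,x_k,n}$ of $\Pi_{F^j(v)}$, we get $\tilde f_n^{\,\ell_0}\big(W_n^{(b)}\big)\subseteq\mathbf f^{\,\ell_0}(D_v)\Subset D$. Hence $\tilde f_n^{\,\ell+\ell_0}(U_n)\Subset D$; as the cycle containing $D$ is disjoint from $U_n$, the forward orbit of $U_n$ is eventually disjoint from $U_n$, and taking $N$ to be the maximum over the finitely many annuli and domains $U_n$ of the (uniformly bounded) transit times yields the claimed $N$.

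The \emph{main obstacle} is the geometric bookkeeping in Step~1: ensuring that all of the forward images above actually lie inside the regions $\Pi_{x_1,\dots,x_k,n}$ bounded by the approximating curves $\tilde C_{x_j,n}$, on which $\tilde f_n$ is honestly holomorphic and equal to the prescribed $f_{a\to F(a)}$, rather than leaking into an edge annulus $\Pi_{e,n}$ where $\tilde f_n$ is only a quasiregular interpolation. This is controlled by Lemma~\ref{lem:approxM} together with the fact that the circles $C_{y_i,n}$ are placed at modulus $\epsilon n\to\infty$ from $\partial B_{S^2}(z_v,R)$, so that the disks $W_n^{(b)}$ and all their later images shrink and are absorbed by the fixed neighbourhood $D_v$ once $n\ge N$; the uniformity of $N$ is then just the finiteness of the number of $U_n$'s and of the lengths of the corresponding $F$-chains.
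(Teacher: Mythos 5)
Your proposal is correct and takes essentially the same route as the paper: reduce the Step-3 annuli to Step-4 domains, follow the finite chain of components of $\Omega$ provided by Condition iii) until a terminal component whose image is a disk shrinking to the exposed point $z_v$, and then invoke Conditions iv) and v) together with the choice of $R$ to land compactly in a periodic disk Fatou component, with finiteness of the domains giving a uniform $N$. The only difference is cosmetic: the paper organizes the chain as a backward induction starting from the terminal components and absorbs the boundary bookkeeping you flag by working with the slightly enlarged domains $U_{y_1,\ldots,y_k,n}$ compactly containing the Step-4 domains, whereas you chain forward and control the same issue via Lemma \ref{lem:approxM} and the modulus-$\epsilon n$ placement of the circles.
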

\begin{proof}
The proof is essentially by induction.
Let $\mathcal{U}$ be a component of $\Omega$.
We first consider the case where $F(\partial \mathcal{U}) = \{v\}$.
Denote $\partial \mathcal{U} = \{a_1,..., a_k\}$ and let $e_j$ be the edge in $\mathcal{U}$ adjacent to $a_j$ with local degree $d_j$.
We choose $y_j$ in the interior of the edge $e_j$ so that $d(a_j, y_j) < \epsilon/d_{e_j}$, and let $\mathcal{U}_{y_1,..., y_k} \subset \mathcal{U}$ bounded by $y_1,..., y_k$.
Let $U_{y_1,..., y_k, n}$ be the domain in $\hat\C_{\Pi,n}$ bounded by $C_{y_j, n}$.
Note that the condition $d(a_j, y_j) < \epsilon/d_{e_j}$ guarantees that the domain $U_n$ in the fourth step of the construction associated to $\mathcal{U}$ is compactly contained in $U_{y_1,..., y_k, n}$.

Since $F(\partial \mathcal{U}) = \{v\}$, the marked points $z_{a_j} \in \hat\C_{a_j}$ associated to $e_j$ are all mapped to $z_v \in \hat\C_v$.
Moreover, $z_v$ has an exposed orbit and is pre-periodic to a periodic exposed critical cycle.
Hence the orbit of $z_v$ under $\tilde{f}_n$ are contained in $\bigcup_{a\in \mathcal{V}_0} \Pi_a$.
By the construction of $\tilde{f}_n$, as $n\to \infty$, the image $\tilde{f}_n(U_{y_1,..., y_k, n})$ converges to $z_v$, where the convergence is taken in $\Pi_v \subseteq \hat\C_v$.
Hence, for sufficiently large $n$, $U_{y_1,..., y_k, n}$ is eventually mapped into periodic disk Fatou component.
In particular, $U_n$ is eventually mapped into periodic disk Fatou component.

Now if $F$ extends to a branched covering between $\mathcal{W}$ and $\mathcal{U}$, denote $\partial \mathcal{W} = \{b_1,..., b_l\}$.
Then we can choose $z_1,..., z_l$ in the interior of edges associated to $\partial \mathcal{W}$ with $d(z_j, b_j) < \epsilon / d_j$ so that
$F(\mathcal{W}_{z_1,..., z_l})$ is compactly contained in $\mathcal{U}_{y_1,..., y_k}$, and $W_{z_1,...,z_l,n}$ be the corresponding domain.
Then a similar argument shows that for sufficiently large $n$, 
$$
\tilde{f}_n (W_{z_1,...,z_l,n}) \subseteq U_{y_1,..., y_k,n}.
$$

Thus inductively, the domain $W_n$ in the fourth step associated to $\mathcal{W}$ is eventually mapped into periodic disk Fatou component for sufficiently large $n$.

The argument for annuli $A_n$ in the third step is similar:
there exists $n$ large enough so that 
$$
\tilde{f}_n^m (A_n) \subseteq U_{y_1,..., y_k, n}
$$
for some $m$ and $U_{y_1,..., y_k, n}$ as in the previous induction step.

Since there are only finitely many such domains to consider, by taking the largest among all, the proposition is proved.
\end{proof}

We can now prove
\begin{theorem}\label{thm:inverse}
There exists an $N$ such that $\tilde{f}_n$ is quasiconformally conjugate to a hyperbolic rational map $f_n$ with finitely connected Fatou set for all $n\geq N$.
\end{theorem}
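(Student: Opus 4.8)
The plan is to check that, for $n$ large, the model map $\tilde f_n$ built in the four steps above is a proper quasiregular self-map of the Riemann sphere $\hat\C_{\Pi,n}\cong\hat\C$ which is holomorphic off a non-recurring open set, to apply Shishikura's principle (Proposition~\ref{prop:ShishikuraPrinciple}), and then to check that the resulting rational map is hyperbolic with finitely connected Fatou set.

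First I would verify that Steps~1--4 patch together into a single proper branched covering $\tilde f_n\colon\hat\C_{\Pi,n}\to\hat\C_{\Pi,n}$. On the domains $\Pi_{x_1,\dots,x_k,n}$ near the vertices $a\in V_0$ the map is the conformal conjugate of $f_{a\to F(a)}$; on the annuli of Step~2 it is a degree $\deg(e)$ holomorphic covering; and on the Step~3 and Step~4 domains it is a quasiregular covering, resp.\ branched covering, of the prescribed degree, interpolating the boundary values already defined. The boundary circles separating consecutive pieces coincide (the partition of the edges was chosen with $\epsilon/d_e$-collars precisely so these curves match), so $\tilde f_n$ is continuous and, being locally a branched covering on each piece, is an open map of a compact connected surface, hence a branched covering of some degree $d\ge 2$ (there are exposed critical points). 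The only place where existence of an interpolating branched covering with prescribed boundary behaviour is not automatic is Step~4, and there it is supplied by condition iii) of Definition~\ref{defn:hpcf}, which furnishes the extension $f_{c\to F(c)}$ of $(F,\mathbf f)$ to the center of each component of $\Omega$; compatibility of local degrees along edges (condition i)) keeps the degree bookkeeping coherent. Let $U^{(n)}$ be the finite union of the Step~3 annuli and Step~4 domains; off $U^{(n)}$ the map $\tilde f_n$ is built from rational maps and holomorphic coverings, hence holomorphic.

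Next, by Proposition~\ref{prop:nonR} there is an $N$ such that for every $n\ge N$ each component of $U^{(n)}$ is eventually mapped compactly into a periodic disk Fatou component of $\tilde f_n$. The forward orbit of any periodic disk Fatou component is a cycle of disks $D_0,\dots,D_{p-1}$ with $D_i\subseteq\Pi_{a_i}$ for vertices $a_i\in V_0$ (by the choice of $R$), whereas every component of $U^{(n)}$ is compactly contained in $\hat\C_{\Pi,n}\setminus\bigcup_{a\in V_0}\Pi_a$. Hence once the orbit of a point of $U^{(N)}$ enters a periodic disk Fatou component it never returns to $U^{(N)}$, and taking the maximum of the finitely many entry times gives an $M$ with $\tilde f_N^{\,j}(U^{(N)})\cap U^{(N)}=\emptyset$ for all $j\ge M$. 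Thus $\tilde f_N$ is a proper $K$-quasiregular self-map of $\hat\C$, holomorphic on $\hat\C-U^{(N)}$ with $U^{(N)}$ non-recurring, so Proposition~\ref{prop:ShishikuraPrinciple} applies: $\tilde f_N$ is quasiconformally conjugate to a rational map $f$.

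Finally I would identify the dynamics of $f$. The thickening resolves the branching at the marked points, so the critical points of $\tilde f_N$ are of only two kinds: exposed critical points sitting inside some $\Pi_a$, which by condition iv) are eventually mapped to a periodic exposed critical cycle --- realized in the model as a superattracting cycle carried by periodic disk Fatou components --- and critical points lying in $U^{(N)}$, which by Proposition~\ref{prop:nonR} are eventually mapped into a periodic disk Fatou component. In either case the critical orbit converges to an attracting cycle, so $f$ is hyperbolic; moreover the periodic Fatou components of $\tilde f_N$ are exactly the Jordan periodic disk Fatou components (every edge of $\mathcal T_0$ eventually leaves $\mathcal T_0$ by condition vi), so no thickened edge or $\Omega$-gap is periodic), so every periodic Fatou component of $f$ is a disk, which by the Riemann--Hurwitz remark at the start of \S\ref{sec:tms} is equivalent to $f$ having finitely connected Fatou set. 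The main obstacle is really the first step: confirming that the four-part construction assembles into one \emph{proper} quasiregular branched covering of the sphere --- that the degrees prescribed on the pieces are mutually consistent and that the interpolations in Steps~3 and 4 can be taken proper of the correct degree --- which is exactly where conditions i)--iii) of Definition~\ref{defn:hpcf} and the underlying Hurwitz-type realizability enter; once the model map is in hand, Proposition~\ref{prop:nonR} supplies the non-recurrence and the rest is a routine application of Shishikura's principle together with tracking critical orbits.
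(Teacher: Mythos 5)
Your proposal is correct and follows essentially the same route as the paper's proof: take $N$ from Proposition~\ref{prop:nonR}, apply Shishikura's principle (Proposition~\ref{prop:ShishikuraPrinciple}) to the model map which is holomorphic off the non-recurring Step~3/Step~4 domains, and then verify hyperbolicity by tracking the exposed critical points (condition iv)) and the critical points in the interpolation regions, with finite connectivity following because all periodic Fatou components are disks. You simply make explicit some details the paper leaves implicit, namely the patching/properness of the four-step model (resting on conditions i)--iii)) and the uniform non-return time for $U^{(N)}$, which is already contained in the ``in particular'' clause of Proposition~\ref{prop:nonR}.
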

\begin{proof}
Let $N$ be as in Proposition \ref{prop:nonR}.
Since the quasiregular map $\tilde{f}_n$ is holomorphic away from union of domains in Proposition \ref{prop:nonR}, by Shishikura Principle (see Proposition \ref{prop:ShishikuraPrinciple}), $\tilde{f}_n$ is quasiconformally conjugate to a rational map $f$.

The rational map is hyperbolic as the critical points are either exposed (i.e., contained in $\Pi_a$ for some $a\in \mathcal{V}_0$), in which case, they are pre-periodic to periodic exposed critical cycles; or contained in domain $U_N$ in the fourth step of the construction, in which case, they are eventually mapped to periodic Fatou set.
The rational map has finitely connected Fatou set as any periodic Fatou component is simply connected.
\end{proof}

It is easy to verify by construction that $f_n$ converges to $(F,R)$.
Moreover, if we fix $f = f_N$, then the stretch of $f$ also converges to $(F,R)$.

\subsection{J-conjugacy}\label{sec:JConj}
There are many choices made in the above construction.
We shall now prove that different choices give J-conjugate rational maps.
The strategy is as follows.
The tree mapping scheme gives a conjugacy on finitely many Julia components corresponding to the vertices $\mathcal{V}_0$. 
This conjugacy can be extended to a global $K$-quasiconformal map.
Using an adapted version of the standard pull back argument, we shall construct a sequence of $K$-quasiconformal maps which restricts to conjugacies on preimages of those Julia components.
The compactness theorem for quasiconformal maps allows us to pass to a limit, giving a $K$-quasiconformal map conjugating the dynamics on the whole Julia sets.

Let $(F, R)$ be an irreducible hyperbolic and post-critically finite tree mapping scheme, and let $f$ and $g$ be two hyperbolic rational maps constructed as above.
Note that each vertex $a\in \mathcal{V}_0$ corresponds to Julia components $J_{a, f}$ and $J_{a, g}$, and we have a quasiconformal conjugacy near $J_{a, f}$ and $J_{a, g}$ for all $a\in \mathcal{V}_0$.
These conjugacies can be pasted together and we have a global quasiconformal map $\psi:\hat\C\longrightarrow\hat\C$ which restricts to conjugacies between $\bigcup_{a\in \mathcal{V}_0} J_{a, f}$ and $\bigcup_{a\in \mathcal{V}_0}J_{a,g}$.
Since both $f$ and $g$ are post-critically finite restricted to disk Fatou components, we may assume that $\psi$ is a conjugacy on disk Fatou components.

We shall now use a pull back argument to construct a sequence of $K$-quasiconformal maps $\psi_n$ on $\hat\C$ which restricts to a conjugacy between 
$$
\bigcup_{k=0}^n f^{-k}(\bigcup_{a\in \mathcal{V}_0} J_{a,f}) \text{ and } \bigcup_{k=0}^n g^{-k}(\bigcup_{a\in \mathcal{V}_0} J_{a,g}).
$$ 

Let $e =[a,b]$ be an edge of $\mathcal{T}_0$.
We define $A_{e,f}$ as the annulus between $J_{a, f}$ and $J_{b,f}$, and similarly for $A_{e,g}$.
For $a\in \mathcal{V}_0$ and an edge $e$ adjacent to $a$, we can associate $S = \partial A_{e,f} \cap J_{a,f}$, and similarly for $g$.
Let $S'$ be a component of $f^{-1}(S)$ that is contained in $J_{v,f}$ for some $v\in \mathcal{V}_0$.
If $S'$ bounds a disk $D$ that is disjoint from $J_{v,f}$ for all $v\in \mathcal{V}_0$, we will call $D$ an {\em exposed disk}.
We enumerate all exposed disks as $D^1_{1,f},..., D^1_{n_1,f}$ for all $a\in \mathcal{V}_0$ and edges adjacent to it.
Similarly, we have $D^1_{1,g},..., D^1_{n_1,g}$.
Since $f$ and $g$ are conjugate on $J_v$ for all $v\in \mathcal{V}_0$, we can assume that the subindices are compatible, i.e., $\psi(\partial D^1_{j,f}) = \partial D^1_{j,g}$.
Note that there are no critical points in $D^1_{j,f}$, thus $f$ is univalent on $D^1_{j,f}$ (and similarly for $g$).
Inductively, we define  $D^{k+1}_{j,f}$ as a disk component of the preimage of $D^k_{i,f}$ with boundary $\partial D^{k+1}_{j,f} \subseteq J_v$ for some $v\in \mathcal{V}_0$, and enumerate all such disks as
$D^{k+1}_{1,f},..., D^{k+1}_{n_{k+1},f}$ (and similarly as $D^{k+1}_{1,g},..., D^{k+1}_{n_{k+1},g}$ with compatible subindices).
We remark that these are exposed disks and that $f$ is univalent on $D^{k+1}_{j,f}$ (and similarly for $g$).

If $e\in E_0$, then $f(A_{e,f})$ is the annulus between $J_{f(a)}$ and $J_{f(b)}$ (and similarly for $g$).
For $e\in E_0$, we choose a core sub-annulus $\tilde{A}_e \subseteq A_{e,g}$ which is contained in a Fatou component.
We define a Dehn twist $T_e$ supported on $\tilde{A}_{e'}$ for some $e'\in E_0$ in the image $F(e)$ so that
$$
T_e \circ \psi \circ f \sim g \circ \psi
$$
on $A_{e, f}$ relative to the boundary $\partial A_{e,f}$.
Thus, we can lift $T_e \circ \psi : f(A_{e,f}) \longrightarrow g(A_{e,g})$ to a map from $A_{e,f}$ to $A_{e,g}$ homotopic to $\psi$ relative to the boundary.

Let $\psi_0 = \psi$. 
We define
$$
\psi_1 = 
  \begin{cases}
   \text{lift of $T_e \circ \psi_0$}, & \text{for } z\in A_{e,f}, e\in E_0 \\
    \text{lift of $\psi_0$}, & \text{for } z\in D^1_{j,f}, j=1,..., n_1\\
    \psi_0, & \text{otherwise} 
  \end{cases}.
$$

Note that the lift $\psi_1 \sim \psi$ on $A_{e,f}$ relative to the boundary for all $e\in E_0$.
Moreover, $\psi_1 = \psi$ on those Fatou components associated to components of $\Omega \subseteq \mathcal{T}_1$.
So we define 
$$
\psi_n = 
  \begin{cases}
   \text{lift of $T_e \circ \psi_{n-1}$}, & \text{for } z\in A_{e,f}, e\in E_0 \\
    \text{lift of $\psi_{n-1}$}, & \text{for } z\in D^i_{j,f}, i=1,...,n, j=1,..., n_i\\
    \psi_{n-1}, & \text{otherwise} 
  \end{cases}.
$$

Inductively, we have $\psi_n \sim \psi$ on $A_{e, f}$ relative to the boundary for all $e\in E_0$.
Moreover, $\psi_m = \psi_n$ on $\bigcup_{k=0}^n f^{-k}(\bigcup_{a\in \mathcal{V}_0} J_{a,f})$ for all $m\geq n$, so $\psi_n$ restricts to conjugacy between $\bigcup_{k=0}^n f^{-k}(\bigcup_{a\in \mathcal{V}_0} J_{a,f})$ and $\bigcup_{k=0}^n g^{-k}(\bigcup_{a\in \mathcal{V}_0} J_{a,g})$.

We claim that there exists a $K$ so that $\psi_n$ is $K$-quasiconformal.
Indeed, away from the backward orbits of the core sub-annuli $\tilde{A}_e$ for $e\in E_0$, $\psi_n$ has the same dilation as $\psi_{n-1}$ as $\psi_n$ is either $\psi_{n-1}$ or a conformal lift of $\psi_{n-1}$.
Thus away from the backward orbits of $\tilde{A}_e$, the dilation of $\psi_{n}$ is bounded by the dilation of $\psi_0 = \psi$.
But $\tilde{A}_e$ is not recurrent, thus the dilation on its backward orbit it is bounded by the dilation of Dehn twist $T_e$ composing with $\psi$.
Thus, $\psi_n$ is $K$-quasiconformal for some constant $K$ independent of $n$.

Since $K$-quasiconformal maps form a compact set, up to passing to a subsequence, we get a limiting $K$-quasiconformal map $\Psi: \hat\C \longrightarrow \hat\C$.
Note that $\Psi$ restricts to a conjugacy between 
$$
\bigcup_{k=0}^\infty f^{-k}(\bigcup_{a\in \mathcal{V}_0} J_{a,f}) \text{ and } \bigcup_{k=0}^\infty g^{-k}(\bigcup_{a\in \mathcal{V}_0} J_{a,g}).
$$ 
Note that the union is dense in $J(f)$ and $J(g)$, thus $\Psi$ restricts to a conjugacy between $J(f)$ to $J(g)$.

Conversely, if $g$ is a hyperbolic rational map which is J-conjugate to $f$, then the stretches of $g$ and $f$ are J-conjugate.
It is not hard to check that the associated tree mapping schemes are topologically conjugate (where the conjugacies on $\hat\C_a$ for $a\in \mathcal{V}_0$ comes from the conjugacies on the associated Julia components).

We remark that there exist J-conjugate rational maps that are in different hyperbolic components (see Figure \ref{fig:JConjugate}).

\begin{figure}[ht]
    \includegraphics[width=0.4\textwidth]{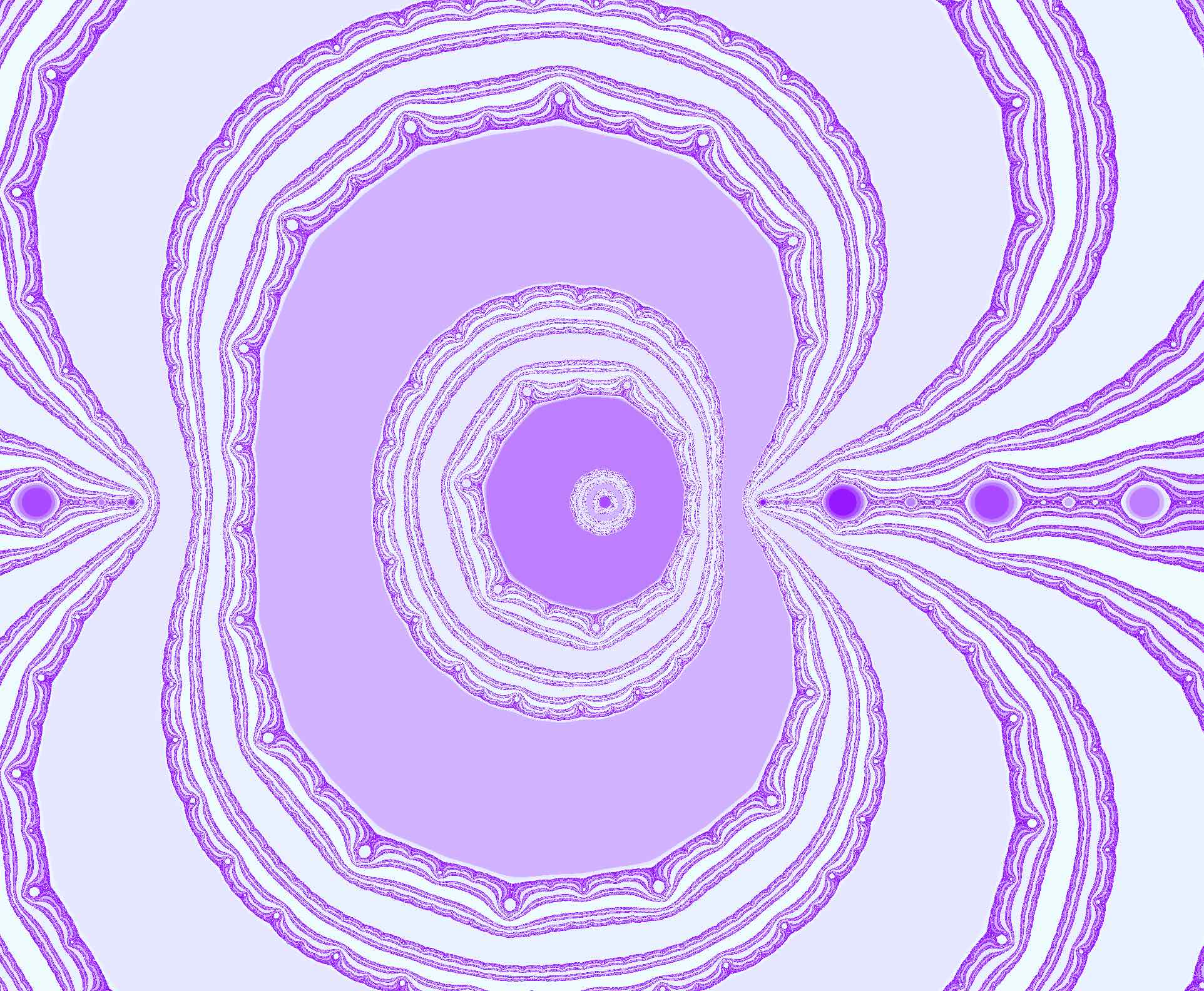}
    \includegraphics[width=0.4\textwidth]{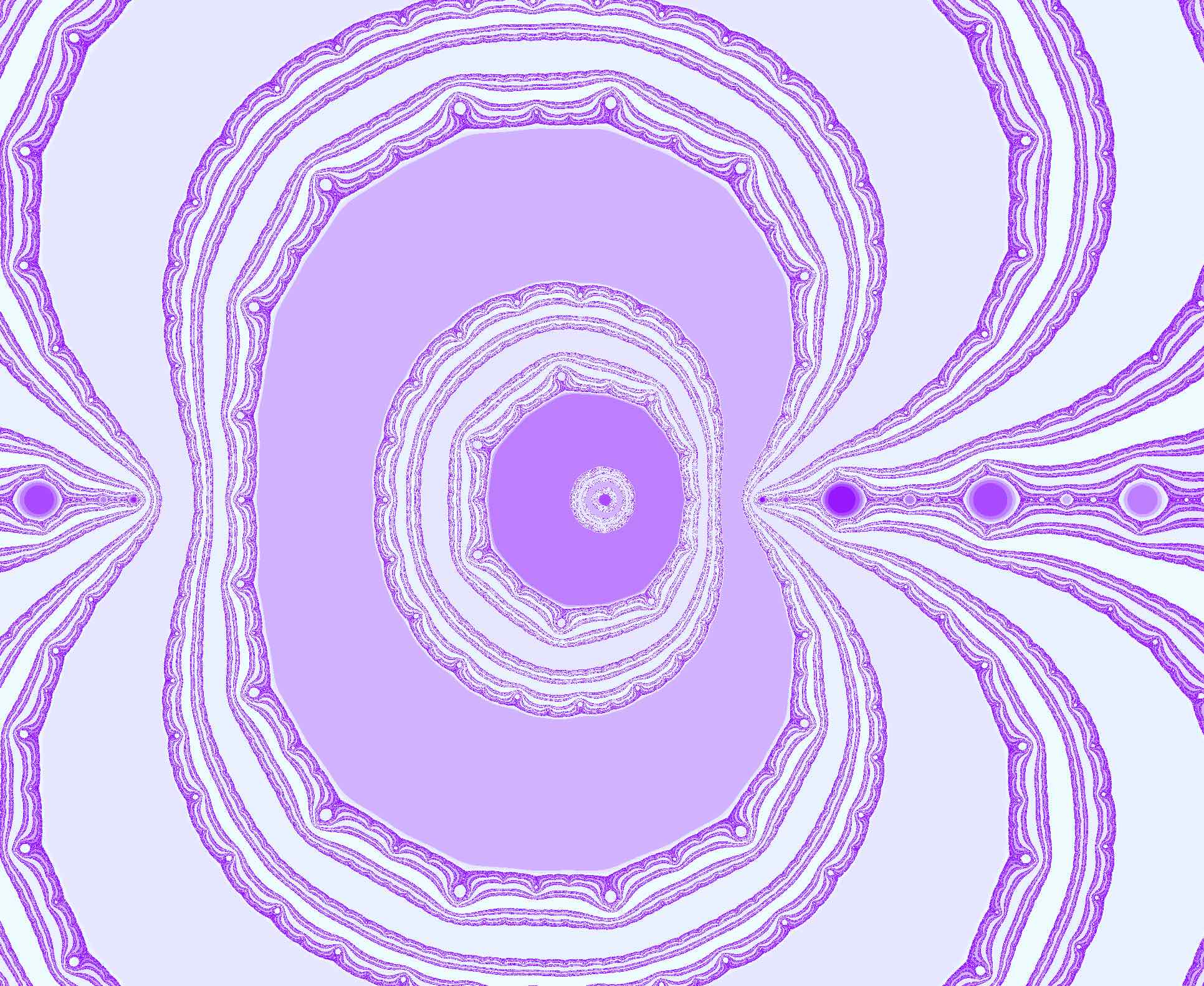}
    \caption{The Julia sets of two J-conjugate hyperbolic rational maps, but are in different hyperbolic components. The rational map is of the form $f(z) = \frac{z^3-3(\frac{3}{4})^{\frac{1}{3}}t^{\frac{1}{3}}z^2}{\frac{9}{2}z-3}+\frac{t^3-3(2^{-\frac{1}{3}})t^{\frac{7}{3}}z}{-2z^3+3tz^2}$ for sufficiently small $t$. There are two choices of cubic roots which gives the identification of the formula with $\Z_3 \times \Z_3$. $(0,0)$ is in the same hyperbolic component as $(1,1)$ by moving $t$ around $0$ once. This turns out to be the only additional identification, thus the formula gives 3 J-conjugate hyperbolic components. One can see the Julia components are twisted in a different (and incompatible) rate. This phenomenon will be studied in detail in the subsequent paper \cite{L20}.}
    \label{fig:JConjugate}
\end{figure}

Theorem \ref{thm:classification} now follows:
\begin{proof}[Proof of Theorem \ref{thm:classification}]
By Theorem \ref{thm:inverse}, we can construct a hyperbolic rational map $f$ whose stretch converges to $(F,R)$.
By the above discussion, any two such constructions give J-conjugate hyperbolic rational maps.
\end{proof}

\section{Julia components}\label{sec:JC}
In this section, we discuss the connections between the Julia set and the tree mapping scheme.
Let $f$ be a hyperbolic rational map with finitely connected Fatou set, and $(F, R)$ be the corresponding tree mapping scheme.
There are three sets of properties we can associate to a Julia component $K$ of $f$.
\begin{itemize}
\item Inherently, the Julia component $K$ is said to be {\em degenerate} if it is a single point, and {\em non-degenerate} otherwise.
If $K$ is neither degenerate nor a Jordan curve, then we say $K$ is of {\em complex type}.
\item We say a Julia component $K$ is {\em buried} if $K$ does not meet the boundary of any Fatou component, and {\em unburied} otherwise.
\item If there exists a number $p$ such that $f^p(K) = K$, then $K$ is said to be {\em periodic}, and the smallest such $p$ is called its {\em period}. It is said to be {\em preperiodic} if there exists $l\geq 0$ so that $f^l(K)$ is periodic. Otherwise, it is said to be {\em wandering}.
\end{itemize}

Since $f$ is a hyperbolic rational map with finitely connected Fatou set, we have
\begin{itemize}
\item Any degenerate Julia component is buried (this follows from the definition and the classification of Fatou components);
\item Any Julia component of complex type is preperiodic (see \cite{PT00});
\item Any unburied Julia component is preperiodic (this follows from no wandering domain theorem \cite{Sullivan85}).
\end{itemize}

Recall that $\mathscr{F}$ consists of critical and post-critical Fatou components.
We say a Julia component $K$ is separating for $\mathscr{F}$ if there exist $V, W \in \mathscr{F}$ that are contained in two different components of $\hat\C-K$.
Let $\mathcal{J}$ be the non-escaping set for the tree map $F:\mathcal{T}_0\longrightarrow \mathcal{T}_1$.
Given $x\in \mathcal{J}$, then the same construction for $x\in \mathcal{V}_0$ gives a Julia component $K_x$ associated to it.
It is easy to see that $K_x$ is separating for $\mathscr{F}$ and any separating Julia component for $\mathscr{F}$ arise in this way.
We have the correspondence:
\begin{lem}\label{lem:JC}
Let $J^{s}$ be the union of separating Julia components for $\mathscr{F}$, there is a semiconjugacy
$$
\phi: J^s \longrightarrow \mathcal{J}
$$
conjugating the dynamics of $f$ and $F$ such the fiber $\phi^{-1}(x)$ is precisely the Julia component $K_x$.
\end{lem}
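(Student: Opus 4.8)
The plan is to recognize $\phi$ as nothing but the restriction of the Shishikura projection. Set $\phi:=\pi|_{J^{s}}\colon J^{s}\longrightarrow\mathcal{T}_{1}$. Since $\pi$ collapses each Julia component to a single point (recorded in the construction of $\mathcal{T}_{1}$), $\phi$ is constant on every separating Julia component, and it is continuous because $\pi$ is. The intertwining relation is then immediate from the functoriality of the induced map: by Theorem~3.6 of \cite{Shishikura87} one has $f_{*}\circ\pi=\pi\circ f$, and $F=f_{*}|_{\mathcal{T}_{0}}$; so for $z\in J^{s}$, once we know $\pi(z)\in\mathcal{J}\subseteq\mathcal{T}_{0}$ and $f(z)\in J^{s}$, we get $\phi(f(z))=\pi(f(z))=f_{*}(\pi(z))=F(\phi(z))$. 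Thus the lemma reduces entirely to organizing the set-level correspondence that precedes it.

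Concretely, the excerpt supplies that $x\mapsto K_{x}$ sends $\mathcal{J}$ onto the set of separating Julia components, with $\pi(K_{x})=x$ by construction. Hence $x\mapsto K_{x}$ is a bijection with inverse $K\mapsto\pi(K)$: if $K$ is separating then $K=K_{y}$ for some $y\in\mathcal{J}$, so $\pi(K)=\pi(K_{y})=y\in\mathcal{J}$ and $K=K_{\pi(K)}$; in particular no two distinct separating Julia components have the same $\pi$-image. This gives at once that $\phi$ lands in $\mathcal{J}$ and is onto, and that $\phi^{-1}(x)=\{z\in J^{s}:\pi(z)=x\}$ is exactly the unique separating Julia component with $\pi$-value $x$, namely $K_{x}$. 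For the forward invariance of $J^{s}$: $\mathcal{J}$ is $F$-invariant (if $F^{n}(x)\in\mathcal{T}_{0}$ for all $n$, so are the $F^{n}(F(x))$), and the assignment $K_{x}$ is natural, i.e.\ $f(K_{x})=K_{F(x)}$ — when $x\in Q$, $K_{x}$ contains a boundary curve $X$ of some $U\in\mathscr{F}$ with $\pi(X)=x$, and then $f(X)$ is a boundary curve of $f(U)\in\mathscr{F}$ with $\pi(f(X))=f_{*}(x)=F(x)$, so $f(X)\subseteq K_{F(x)}\cap f(K_{x})$ forces $f(K_{x})=K_{F(x)}$; the Julia–branch–point and edge–interior cases are handled the same way. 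Combined with the excerpt's statement that every $K_{y}$ ($y\in\mathcal{J}$) is separating, this yields $f(J^{s})\subseteq J^{s}$ and identifies $f|_{J^{s}}$ with $F|_{\mathcal{J}}$ through $\phi$.

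The only genuinely non-formal ingredient — hidden in the phrase ``it is easy to see'' — is the injectivity of this correspondence, equivalently the topological fact that the Shishikura projection never glues two distinct separating Julia components and never maps a Julia component into the open set $\Omega$. I expect this to be the main point to write carefully. The mechanism: any two distinct Julia components $K\neq K'$ are separated in $\hat\C$ by a Fatou component (the component of $\hat\C\setminus(K\cup K')$ meeting both lies, after reducing to an adjacent pair, inside a single Fatou component whose boundary then meets two distinct Julia components, hence is disconnected); that Fatou component is therefore multiply connected, and every multiply connected Fatou component contains annuli of $\tilde{\mathscr{A}}$ — components of the iterated preimages of the model annuli $A_{i,j}$ — separating any two of its boundary components. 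One then verifies, by tracking which side of such an annulus the closures of the relevant members of $\mathscr{F}$ lie on (using precisely that $K$ and $K'$ are separating for $\mathscr{F}$), that the annulus belongs to $\mathscr{A}$, so it contributes to $\tilde{d}(K,K')$ and, applied to the core of a multiply connected Fatou component versus $K$, pins $\pi(K)$ outside $\Omega$. This complementary-component bookkeeping is the delicate step; everything else is the formal repackaging above, and is already sketched in the paragraph preceding the lemma.
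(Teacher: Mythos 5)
Your proof is correct and follows essentially the same route as the paper: you define $\phi$ as the Shishikura projection restricted to $J^s$ (which is exactly the paper's $\phi(z)=x$ for $z\in K_x$), get continuity from continuity of $\pi$, and get the semiconjugacy from the defining relation of the induced map $f_*$. The additional material you supply (the bijection $x\mapsto K_x$, forward invariance, and the separation mechanism behind the ``easy to see'' claim) is an elaboration of the remarks preceding the lemma rather than a different argument, and your one imprecision --- invoking $f_*\circ\pi=\pi\circ f$ globally, which is only guaranteed on fiber boundaries --- is harmless here since $\pi\circ f$ is constant on each Julia component and each $K_x$ meets $\partial\pi^{-1}(x)$.
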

\begin{proof}
We define $\phi (z) = x$ if $z\in K_x$.
$\phi$ is continuous as $\pi: \hat\C\longrightarrow \mathcal{T}_1$ is continuous.
The fact that it conjugates the dynamics follows from the fact $\pi\circ f (\partial \pi^{-1}(x)) = f_*(x)$.
\end{proof}

Note that the Julia set and Fatou set can be naturally generalized for the rational mapping scheme $R: \hat\C^{\mathcal{V}_0} \longrightarrow \hat\C^{\mathcal{V}_1}$.
For periodic Julia components of $f$, we have
\begin{theorem}\label{lem:PS}
Let $K$ be a non-degenerate periodic Julia component, then it is separating for $\mathscr{F}$, so $\phi(J) \in \mathcal{J}$.

Moreover, if $K$ is of complex type, then $\phi(K) \in \mathcal{V}_0$, and $K$ is homeomorphic to the Julia set of $R$ in $\hat\C_{\phi(K)}$.
\end{theorem}
\begin{proof}
If $K$ is a non-degenerate periodic Julia component of period $p$, then there exist at least two component $D_1, D_2$ of $\hat\C-K$ so that $f^p|_{\partial D_i}$ has degree $\geq 2$. This means that there exists $V_i\in \mathscr{F}$ contained in $D_i$, so $K$ is separating for $\mathscr{F}$.

For the moreover part, we note that if $K$ is of complex type, then it must be periodic. 
Thus there are three components $D_1, D_2, D_3$ of $\hat\C-K$ with $f^p|_{\partial D_i}$ has degree $\geq 2$. So $\phi(K)$ must either be  a branch point or has an exposed critical point. Therefore $\phi(K) \in \mathcal{V}_0$.
It follows from our construction that $K$ is homeomorphic to the Julia set of $R$ in $\hat\C_{\phi(K)}$.
\end{proof}

\begin{cor}\label{thm:ConjugacyPeriodicComponents}
Let $f$ be a hyperbolic rational map with finitely connected Fatou set, and $(F, R)$ be the associated tree mapping scheme. Then non-degenerate periodic Julia components for $f$ are in bijective correspondence with periodic points of $F: \mathcal{T}_0 \longrightarrow \mathcal{T}_1$.
\end{cor}

We say a point $x\in \mathcal{J} \subseteq \mathcal{T}_0$ is {\em buried} in $\mathcal{J}$ if $x$ is not on the boundary of any component of $\mathcal{T}_1 - \mathcal{J}$.
As a corollary, we have
\begin{cor}\label{cor:bjc}
	Let $f$ be a hyperbolic rational map with finitely connected Fatou set.
	A non-degenerate periodic Julia component $K$ is buried if and only if the corresponding point $x \in \mathcal{V}_0$ is buried in the non-escaping set $\mathcal{J} = \bigcap_{n=0}^\infty F^{-n}(\mathcal{T}_0)$ and no exposed critical orbits are attached at $x$.
\end{cor}
\begin{proof}
	Note that the component $K$ does not meet the boundary of a multiply connected Fatou component if and only if $x$ is buried in $\mathcal{J}$, and $K$ does not meet the boundary of a disk Fatou component if and only if no exposed critical orbits are attached at $x$.
	Combining the two, we get the result.
\end{proof}

\section{Cycles of rescaling limits}\label{sec:rl}
In this section, we shall use tree mapping schemes to construct a sequence of rational maps with infinitely many non-monomial rescaling limits.

Let $f_n$ be a sequence of rational maps. 
A {\em period $p$ rescaling} for $f_n$ is a sequence $M_n \in \PSL_2(\C)$ so that $M_n^{-1} \circ f_n^p \circ M_n$ converges compactly away from a finite set to a rational map $g$ of degree $\geq 1$.
The limiting map $g$ is called a {\em rescaling limit of period $p$}.

Two rescalings $M_n, L_n$ are said to be {\em dynamically dependent} if after passing to a subsequence, there exists $1\leq m \leq q$ so that $L_n^{-1} \circ f_n^m \circ M_n \to g_1$ and $M_n^{-1} \circ f_n^{q-m} \circ L_n \to g_2$ for some rational maps $g_1, g_2$ of degree $\geq 1$.
They are called {\em dynamically independent} otherwise (see \cite{Kiwi15}).

In \cite{CP19}, it is shown that a rational map of degree $d$ may have an arbitrarily large number of periodic cycles of Julia components of complex type.
Such a map arises naturally as we perform surgery on periodic points on the tree mapping scheme, which is in the same spirit as the self-grafting construction in \cite{CP19}.
We are not aiming to give the most general construction as the construction is very flexible and there are many different variations. 

We first present the idea of the construction with the following example.
Let $g_n(z) = z^3+\frac{1}{nz^3}$.
Then for all sufficiently large $n$, the Julia set is a Cantor set of circles (see \cite{McM88}).
It corresponds to the following tree mapping scheme (see Figure~\ref{fig:CantorCircleExample1}):
$$
\mathcal{T}_{1,F_0} = [A, A'] \cong [0,1],
$$
and 
$$
\mathcal{T}_{0, F_0} = [A, B]\cup [B', A'] \cong [0, \frac{1}{3}] \cup [\frac{2}{3},1].
$$
\begin{figure}[ht]
    \centering
    \resizebox{0.6\linewidth}{!}{
    \def\svgwidth{\columnwidth}
    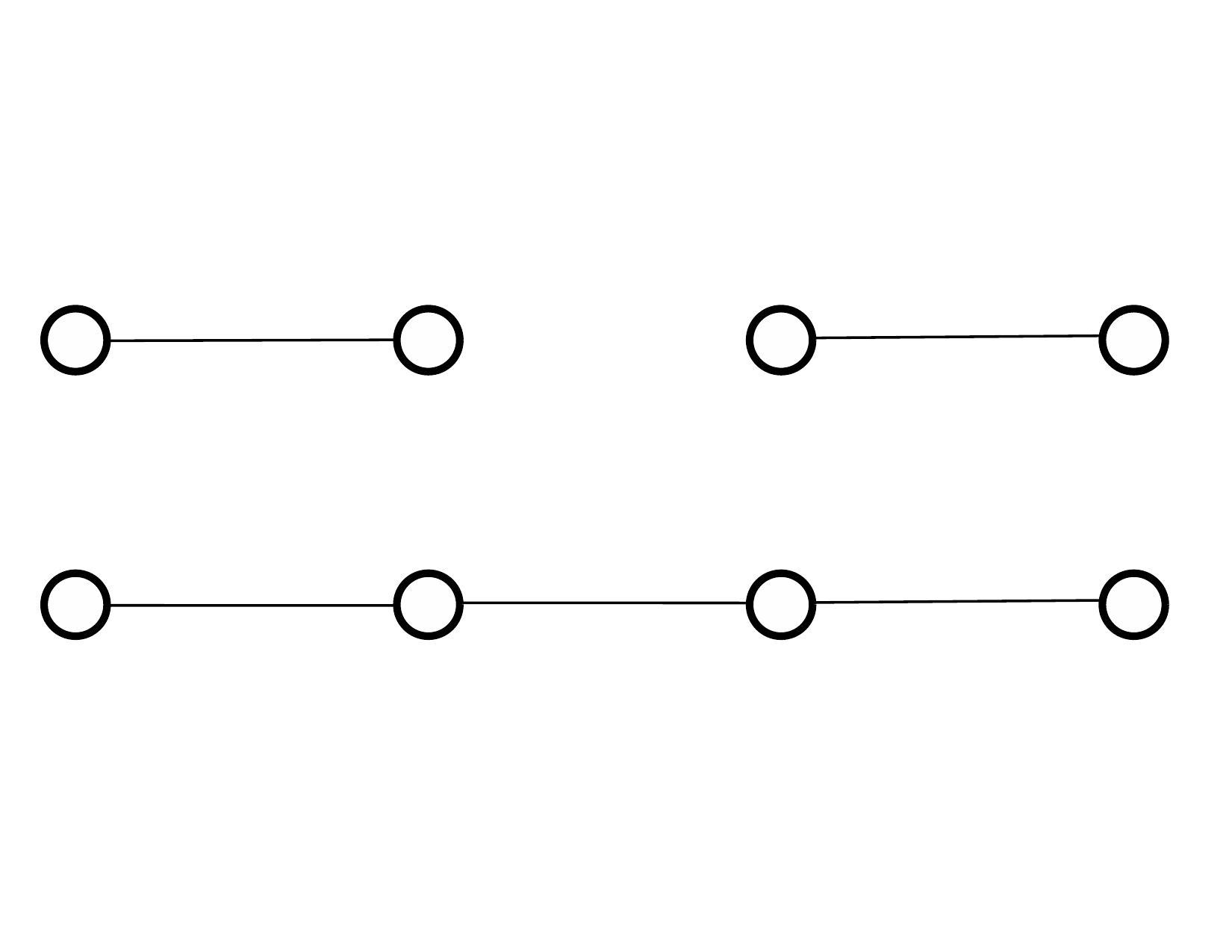

    }
    \caption{The tree map for $f_n(z) = z^3+\frac{1}{nz^3}$. }
    \label{fig:CantorCircleExample1}
\end{figure}

The tree map is a tent map:
$$
F_0(x)=
\begin{cases}
3x, \text{ for } x\in [0, \frac{1}{2}]\\
3(1-x), \text{ for } x\in [\frac{2}{3},1]
\end{cases}.
$$
We mark the Riemann spheres so that the tangent vector pointing towards (respectively, away from) $A$ corresponds to $\infty$ (respectively, $0$). 
Under this marking, the rational mapping schemes associated to $g_n(z) = z^3+\frac{1}{nz^3}$ is
\begin{align*}
R_{0,A\to A}(z) = R_{0,B\to A'}(z) &= z^3,\\
R_{0,A'\to A}(z) = R_{0,B'\to A'}(z) &= \frac{1}{z^3}.
\end{align*}

Note that the point $A_0 = \frac{3}{4}$ is fixed by $F$ and $F$ sends $[B', A_0]$ onto $[A', A_0]$, and $[A_0, A']$ onto $[A_0, A]$.
We now construct a new tree mapping scheme $(F_1, R_1)$ by performing surgery on $(F_0, R_0)$.
Intuitively, the surgery construction will replace a Jordan curve Julia component by a more complicated Julia component.

First, we construct the tree $\mathcal{T}_{1, F_1}$ by attaching a new arc $[A_0, D]$ of the same length of $[C, A']$ at $A_0$ to $\mathcal{T}_{1, F_0}$ (see Figure \ref{fig:SurgeryOnFixedPoint}).
The new tree map $F_1$ sends $[A, B]$ and $[A', B']$ homeomorphically onto $[A,D]$ by expanding with a factor of $3$, and sends $[A_0, D]$ isometrically onto $[A_0, A']$.

\begin{figure}[ht]
    \centering
    \resizebox{0.6\linewidth}{!}{
    \def\svgwidth{\columnwidth}
    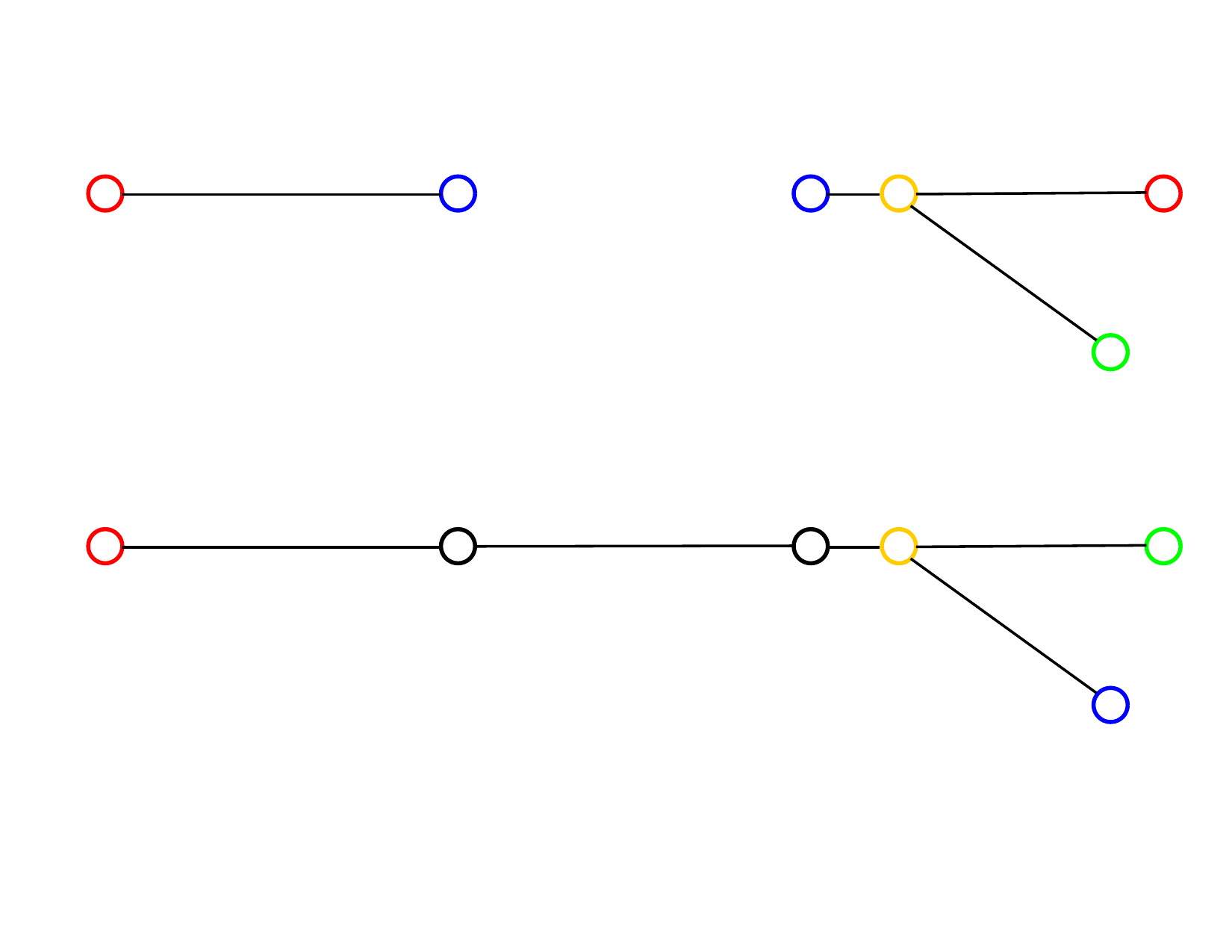

    }
    \caption{The tree map after the surgery on the fixed point $C$. The dynamics of $F_1$ on vertices are labeled by colors.}
    \label{fig:SurgeryOnFixedPoint}
\end{figure}

Let $\zeta_8$ be the primitive $8$-th root of unity. 
We mark the Riemann sphere $\hat\C_{A_0}$ so that $0_{A_0}, \infty_{A_0}, (\zeta_8)_{A_0}$ correspond to the direction of $A', A, D$ respectively.
In the same way, the other Riemann spheres are marked so that the tangent vector pointing towards (respectively, away from) $A$ corresponds to $\infty$ (respectively, $0$). 
The surgery introduces a post-critically finite hyperbolic rational map
$$
R_{1,A_0\to A_0}(z) = \frac{1}{z^3}+\zeta_8,
$$
with critical orbit
$$
0_{A_0}\to \infty_{A_0} \to (\zeta_8)_{A_0} \to 0_{A_0}.
$$
We modify the rational mapping scheme $R$ accordingly:
\begin{align*}
R_{1,A\to A}(z) = R_{1,B\to D}(z) &= z^3,\\
R_{1,A'\to A}(z) = R_{1,B'\to D}(z) &= \frac{1}{z^3},\\
R_{1,D\to A'}(z) &= z.
\end{align*}

\begin{figure}[ht]
    \centering
    \begin{subfigure}{0.7\textwidth}
        \centering
        \includegraphics[width=\textwidth]{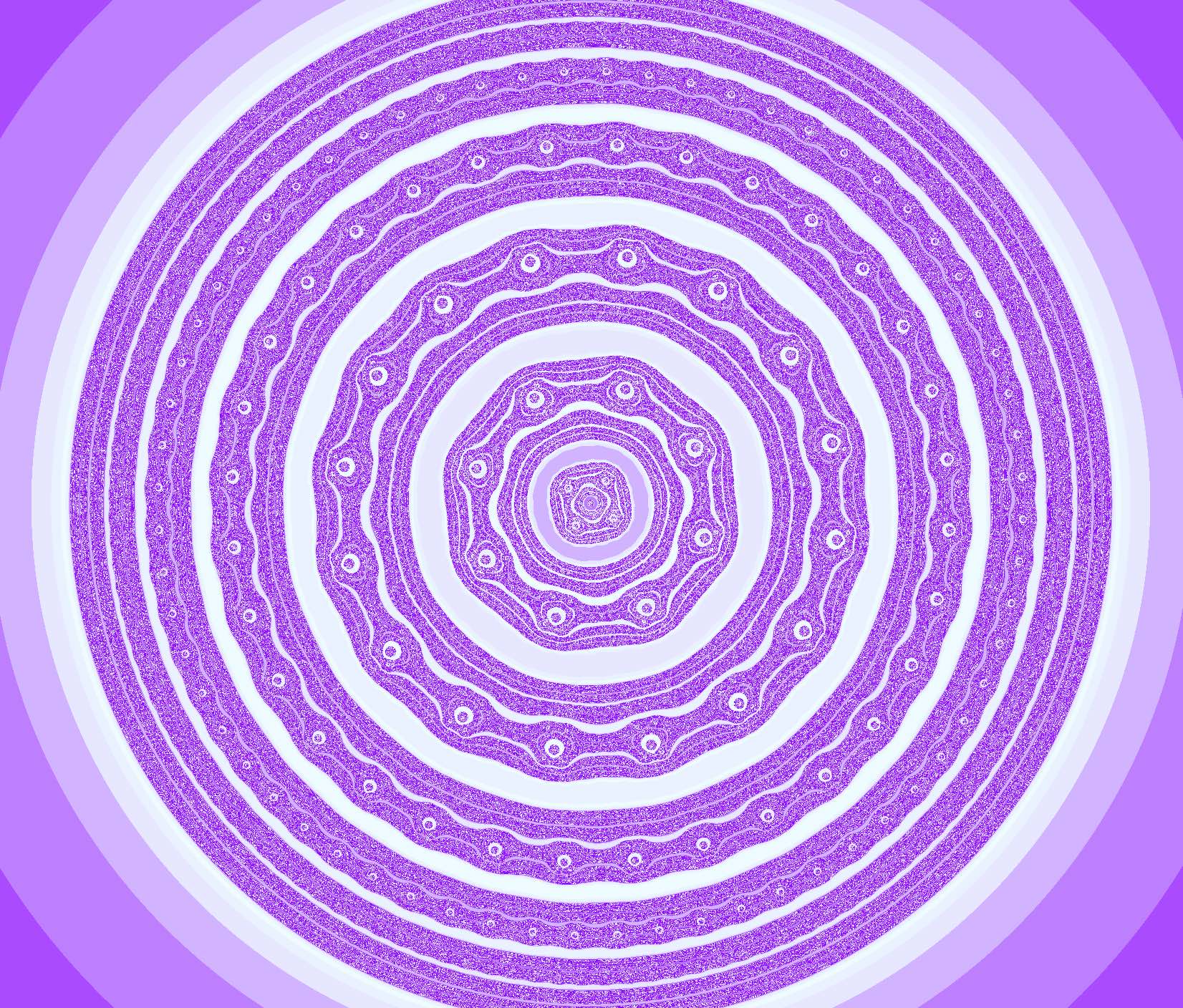}
        \caption{The Julia set of the hyperbolic rational map after performing the surgery. The algebraic formula is given by $g_n(z) = z^3+\frac{1}{nz^3}+\frac{\zeta_8}{n^{1/4}}$ for sufficiently large $n$.}
    \end{subfigure}
    \begin{subfigure}{0.9\textwidth}
        \centering
        \includegraphics[width=0.4\textwidth]{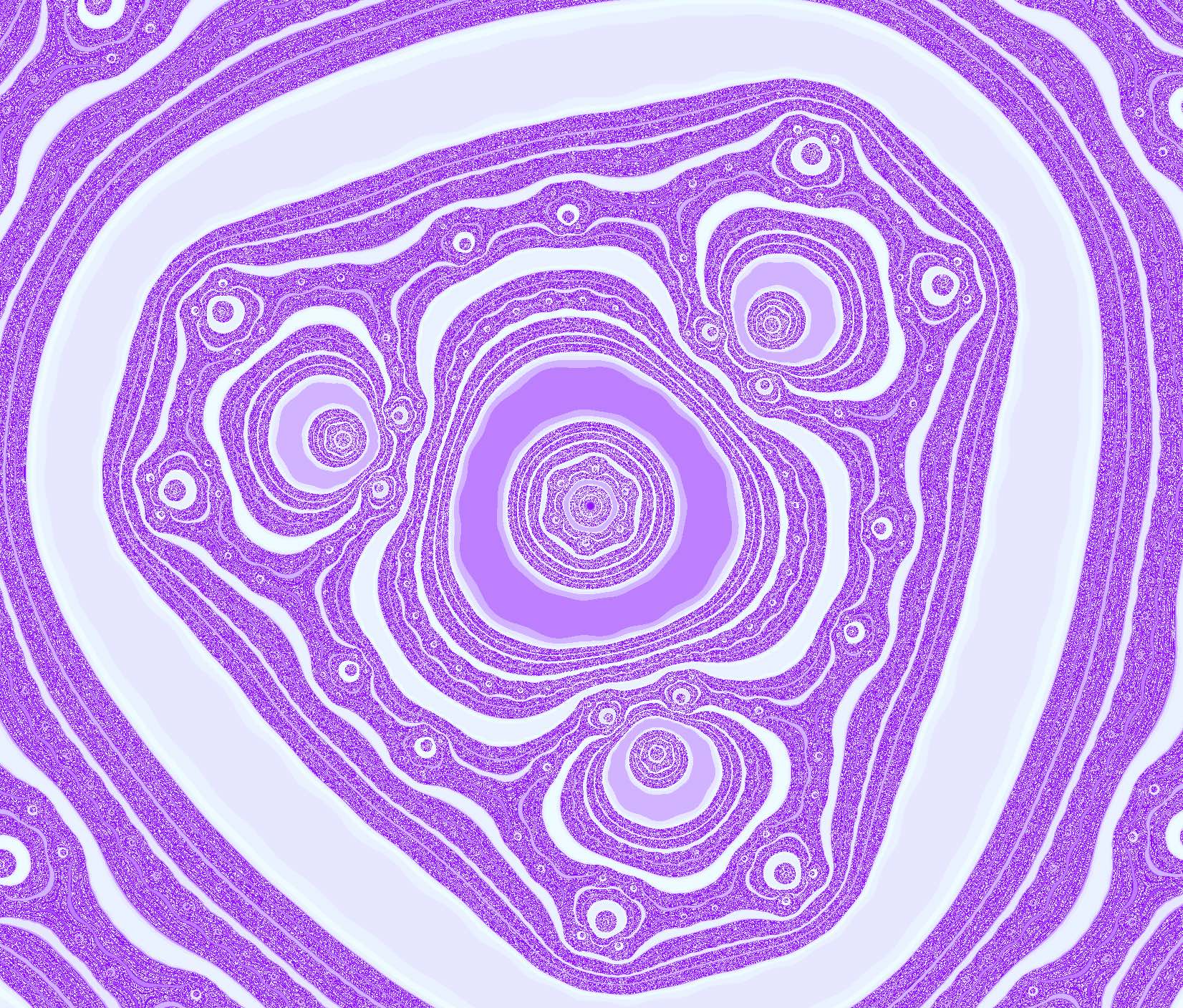}
        \includegraphics[width=0.4\textwidth]{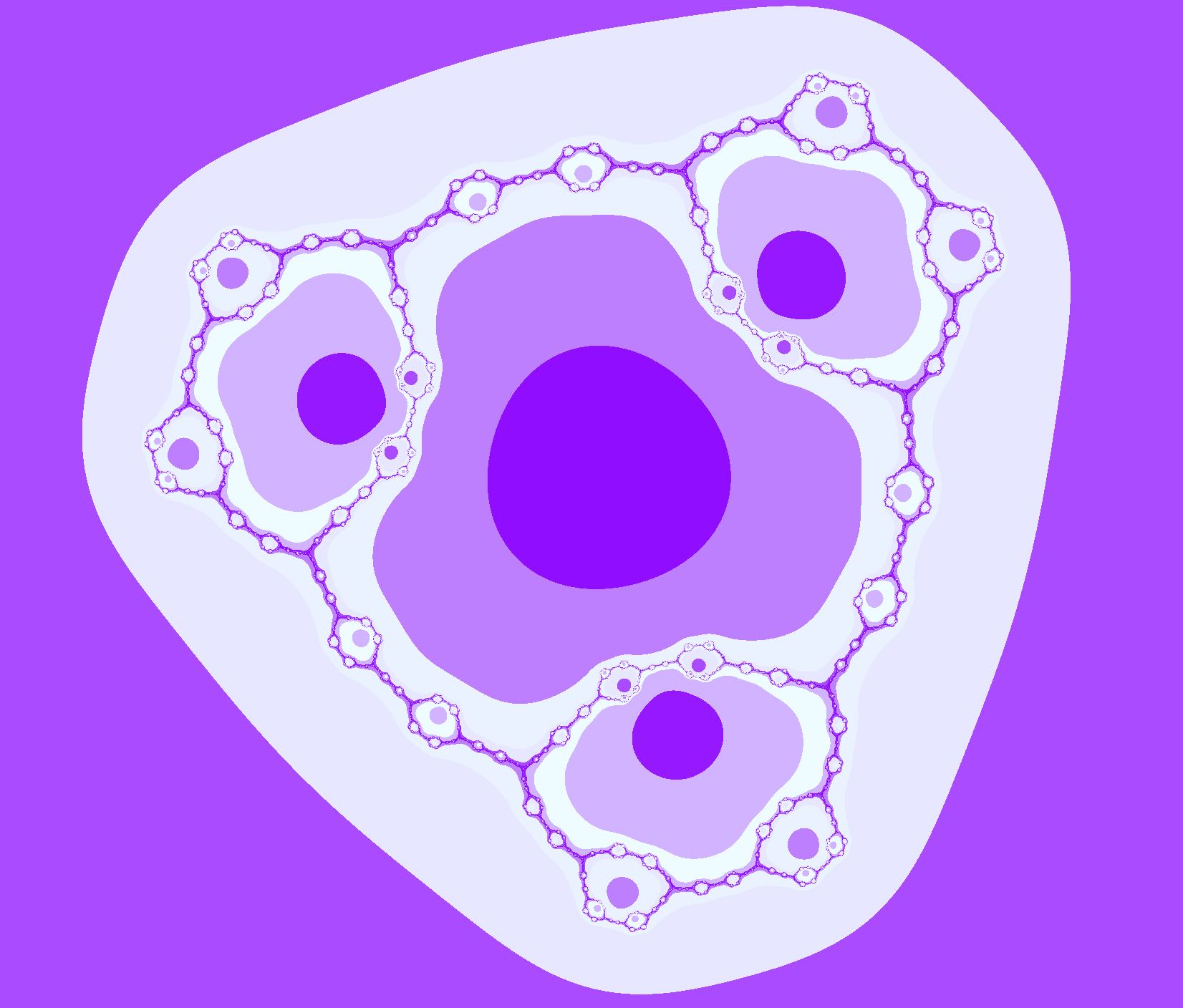}
        \caption{A zoom of the above Julia set on the left. It replaces the fixed buried Julia component which is a Jordan curve in the original Cantor circle example to the Julia set of $\frac{1}{z^3}+\zeta_8$ (on the right).}
    \end{subfigure}
    \caption{}
    \label{fig:JuliaSurgeryOnFixedPoint}
\end{figure}

We will now iterate this construction to get a sequence of tree mapping schemes $(F_k, R_k)$.
We shall identify the tree $\mathcal{T}_{1,F_i}$ as a subtree in $\mathcal{T}_{1, F_j}$ for all $j\geq i$ as the trees $\mathcal{T}_{1, F_j}$ will be constructed by adding more branches at periodic points to $\mathcal{T}_{1, F_{j-1}}$.
Through out the construction, the Riemann spheres for unbranched points are always marked so that the tangent vector pointing towards (respectively, away from) $A$ corresponds to $\infty$ (respectively, $0$). 
Let $A_0= \frac{3}{4}$ and $A_m \in \mathcal{T}_{1, F_0} = [0,1]$ be the smallest periodic point of period $2^m$ under $F_0$ that is larger than $A_{m-1}$.
Note that $R_0^{2^m}|_{\hat\C_{A_m}}(z) = \frac{1}{z^{3^{2^m}}}$.

Let $(F_k, R_k)$ be constructed. 
By induction, we have $A_k$ has period $q=3^k$ under $F_k$,
we construct $\mathcal{T}_{1, F_{k+1}}$ by attaching an edge $S_j$ of the same length as $[A_k, A']$ at $F^j_k(A_k)$ to $\mathcal{T}_{1, F_k}$ where $j=0,..., q-1$:
$$
\mathcal{T}_{1,F_{k+1}} = \mathcal{T}_{1,F_k} \cup \bigcup_{j=0}^{q-1} S_j.
$$
Denote $X_j = F^{j}_k(A_k)$.
By induction, $X_j$ is not a branch point for $\mathcal{T}_{1, F_k}$.
Thus $R_{k, X_j\to X_{j+1}} = z^l$ for some $l \in \mathbb{Z}$.
Consider $h_j(z) = R_{k, X_j \to X_{j+1}} (z)$ for $j<q-1$ and $h_{q-1}(z) = R_{k, X_{q-1} \to X_0} (z) +a_k$,
Then we have
$$
H(z) := h_{q-1}\circ h_{q-2} \circ... \circ h_0(z) = \frac{1}{z^{3^{2^k}}} + a_k.
$$ 
We choose $a_k$ so that $H(z)$ has the post-critical set $0\to \infty \to a_k \to 0$.
In the new tree $\mathcal{T}_{1, F_{k+1}}$, we mark the Riemann spheres at $X_j$ so that the edge $S_j$ corresponds to $h_{j-1}\circ ... \circ h_1(a_k) \in \hat\C_{X_j}$.
Denote $S = [A_k, A']$, and let $I_j: S \longrightarrow S_j$ be the isometry with $I_j(A_k) = X_j$.
We define the tree map $F_{k+1}$ as follows:
\begin{itemize}
\item If $x\in \mathcal{T}_{1, F_k}$ and $F_k(x) \notin S$, then $F_{k+1}(x) = F_k(x)$;
\item If $x\in \mathcal{T}_{1, F_k}$ and $F_k(x) \in S$, then $F_{k+1}(x) = I_0\circ F_k(x)$;
\item If $x\in S_j$ and $j\neq q-1$, then $F_{k+1}(x) = I_{j+1}\circ I_{j}^{-1} (x)$;
\item If $x\in S_{q-1}$, then $F_{k+1}(x) = I_{q-1}^{-1}(x)$.
\end{itemize}
If $x$ is a branch point of $\mathcal{T}_{1, F_k}$, the rational mapping scheme on $\hat\C_x$ stays the same.
At a new branch point $X_j$, we define
$$
R_{k+1, X_j \to X_{j+1}}(z) = h_j(z).
$$
At an end point $X$ of $\mathcal{T}_{0, F_{k+1}}$, the rational mapping scheme is defined as a monomial respecting the marking.

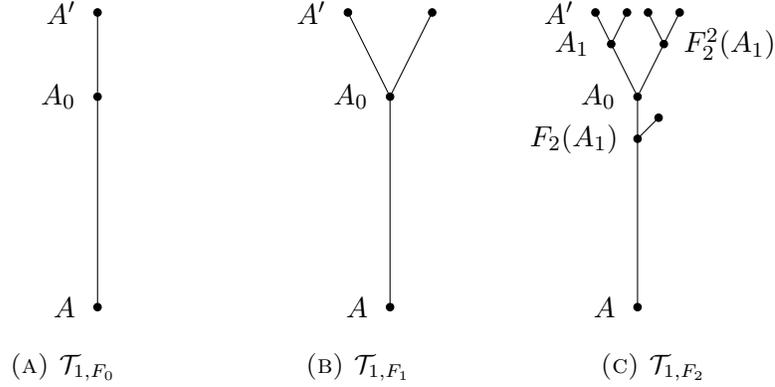
\begin{figure}[!htb]

\begin{subfigure}[t]{.3\textwidth}
  \centering

\begin{tikzpicture}[scale=1.4]

\tikzstyle{every node}=[draw,shape=circle]

\draw[] (0,0) node[circle,fill,inner sep=1pt,label=left:$A $](a){} -- (0,2) 
node[circle,fill,inner sep=1pt,label=left:$A_0$](b){};

\draw[] (b) -- (0,2.8) node[circle,fill, inner sep=1 pt, label=left:$A'$](c){};
\end{tikzpicture}
\caption{$\mathcal{T}_{1,F_0}$} \label{fig:T_-1}

\end{subfigure}
\begin{subfigure}[t]{.3\textwidth}
  \centering

\begin{tikzpicture}[scale=1.4]

\tikzstyle{every node}=[draw,shape=circle]

\draw[] (0,0) node[circle,fill,inner sep=1pt,label=left:$A$](a){} -- (0,2) 
node[circle,fill,inner sep=1pt,label=left:$A_0$](b){};

\draw[] (b) -- (-0.4,2.8) node[circle,fill, inner sep=1 pt, label=left:$A'$](c){};
\draw[] (b) -- (0.4,2.8) node[circle,fill, inner sep=1 pt](d){};
\end{tikzpicture}
\caption{$\mathcal{T}_{1,F_1}$} \label{fig:T_0}

\end{subfigure}
\begin{subfigure}[t]{.3\textwidth}
  \centering

\begin{tikzpicture}[scale=1.4]

\tikzstyle{every node}=[draw,shape=circle]

\draw[] (0,0) node[circle,fill,inner sep=1pt,label=left:$A$](a){} -- (0,2) 
node[circle,fill,inner sep=1pt,label=left:$A_0$](b){};

\draw[] (b) -- (-0.4,2.8) node[circle,fill, inner sep=1 pt, label=left:$A'$](c){};
\draw[] (b) -- (0.4,2.8) node[circle,fill, inner sep=1 pt](d){};

\draw[] (-0.25,2.5) node[circle,fill,inner sep=1pt,label=left:$A_1$](e){} -- (-0.1,2.8) 
node[circle,fill,inner sep=1pt](f){};

\draw[] (0.25,2.5) node[circle,fill,inner sep=1pt,label=right:$F_2^2(A_1)$](g){} -- (0.1,2.8) 
node[circle,fill,inner sep=1pt](h){};

\draw[] (0,1.6) node[circle,fill,inner sep=1pt,label=left:$F_2(A_1)$](j){} -- (0.2,1.8) 
node[circle,fill,inner sep=1pt](k){};

\node[left,,draw=none,scale=0.8] at (-0.22,2.65) {$$};
\node[left,,draw=none,scale=0.8] at (0.08,1) {$$};
\node[right,,draw=none,scale=0.8] at (0.05,2.22) {$$};
\node[left,,draw=none,scale=0.8] at (-0.05,2.22) {$$};
\node[left,,draw=none,scale=0.8] at (0.08,1.8) {$$};
\node[right,,draw=none,scale=0.8] at (0.22,2.65) {$$};
\node[right,,draw=none,scale=0.8] at (-0.3,2.65) {$$};
\node[right,,draw=none,scale=0.8] at (0,1.6) {$$};
\node[left,,draw=none,scale=0.8] at (0.3,2.65) {$$};

\end{tikzpicture}
\caption{$\mathcal{T}_{1,F_2}$} \label{fig:T_1}

\end{subfigure}

\caption{A schematic picture of the first three construction of $\mathcal{T}_{1,G_n}$.
At each step, we add one additional edge on the orbit of $a_k$ with period $3^k$.
The orbit of $a_k$ intersects each edge exactly once. The $k$-th step has exactly $3^{k+1}$ edges.} 
\label{fig:T_1}

\end{figure}

From the construction, it is easy to verify that $(F_{k+1}, R_{k+1})$ is hyperbolic, post-critically finite and irreducible.
By induction, we can verify that $A_m$ has period
$$ 
p_{k,m}:=
  \begin{cases}
                                   3^m& \text{if $m\leq k$} \\
                                  2^{m-k}\cdot 3^{k} & \text{if $m>k$}
  \end{cases}.
$$
Moreover, with the marking defined as above, the first return of the rational mapping scheme $R_k$ at $A_m$ is
$$
R_k^{p_{k,m}}|_{\hat\C_{A_m}}(z) = 
 \begin{cases}
                                   \frac{1}{z^{3^{2^m}}}+a_m & \text{if $m< k$} \\
                                   \frac{1}{z^{3^{2^m}}} & \text{if $m\geq k$}
  \end{cases},
$$
where the post-critical set is $0\to \infty\to a_m\to 0$ for $m<k$ or $0\to \infty \to 0$ for $m\geq k$.
Note that $\frac{1}{z^{3^{2^m}}}+a_m$ is conjugate to $1-\frac{1}{z^{3^{2^m}}}$ with post-critical set $0\to\infty\to 1 \to 0$.
Thus, to summarize, we have
\begin{prop}\label{prop:krl}
For each $k$, there exists an irreducible post-critically finite hyperbolic tree mapping scheme $(F_k, R_k)$ of degree $6$ with periodic points $x_m \in \mathcal{T}_0$ of periodic $p_m = 3^m$, $m=0,..., k-1$, so that $R_{x_m}^{p_m} (z) = 1-\frac{1}{z^{3^{2^m}}}$.
\end{prop}

\subsection*{Double infinite sequences and diagonal arguments.}
By Theorem \ref{thm:classification}, there exists a sequence of rational maps $g_{k,n}$ of degree $6$ converging to $(F_k, R_k)$ as in Proposition \ref{prop:krl}.
Thus given $m < k$, we have a sequence of rescalings $M_{k,m,n} \in \PSL_2(\C)$ so that
$$
\lim_n M_{k,m,n}^{-1} \circ g_{k,n}^{p_{k,m}} \circ M_{k,m,n}(z) = 1-\frac{1}{z^{3^{2^m}}}.
$$
By our construction, the holes are contained in the marked set $\{0, 1, \infty\}$.

Let $K_j$ be an exhaustion of compact sets for $\hat\C-\{0,1,\infty\}$.
Denote the metric generated by the sup norm (with respect to the standard spherical metric) on $K_j$ for a pair of continuous functions by
$$
d_j(f,g) := \sup_{x\in K_j}\{d_{S^2}(f(x), g(x))\}.
$$
Then for any fixed $j$ and $m<k$, 
$$
\lim_n d_j(M_{k,m,n}^{-1} \circ g_{k,n}^{p_{k,m}} \circ M_{k,m,n}, 1-\frac{1}{z^{3^{2^m}}}) = 0.
$$

We construct the sequence $h_k = g_{k, n(k)}$ where $n(k)$ is chosen so that
$$
d_j(M_{m,k,n(k)}^{-1} \circ g_{k,n(k)}^{p_{k,m}} \circ M_{m,k,n(k)}, 1-\frac{1}{z^{3^{2^m}}}) < 1/k
$$
for all $m< k$ and all $j< k$.
Let $M_{k,m}:= M_{k,m,n(k)}$ and $p_m := 3^m$. Note that $p_m = p_{k,m}$ for all sufficiently large $k$.
\begin{prop}
The sequence $h_k$ has infinitely many dynamically independent non-monomial rescaling limits.
\end{prop}
\begin{proof}
By construction, for any fixed $m$ and $j$,
$$
\lim_k d_j(M_{k,m}^{-1} \circ h_k^{p_m} \circ M_{k,m}, 1-\frac{1}{z^{3^{2^m}}}) = 0.
$$
Therefore,
$$
\lim_k M_{k,m}^{-1} \circ h_k^{p_m} \circ M_{k,m}(z) = 1-\frac{1}{z^{3^{2^m}}}.
$$
Therefore, for any $m$, $M_{k,m}$ is a periodic rescaling of for the sequence $h_k$.

Note for different $m$, $M_{k,m}$ represent dynamically independent periodic rescalings.
Indeed, if $M_{k,m}$ were dynamically dependent to $M_{k,l}$ with $l> m$, then after possibly passing to a subsequence, $\lim_k M_{k,m}^{-1} \circ h_k^{p_l} \circ M_{k,m}(z)$ is conjugate to $1-\frac{1}{z^{3^{2^l}}}$.
This means the $p_l/p_m$ iterate of $1-\frac{1}{z^{3^{2^m}}}$ is conjugate to $1-\frac{1}{z^{3^{2^l}}}$, which is not possible.
Thus, we have infinitely many dynamically independent cycles of non-monomial rescaling limits for $h_k$.
\end{proof}

\subsection*{General construction for arbitrary degrees}
We now consider the more general case with any degrees. 
The construction is essentially the same as in the above example.
\begin{proof}[Proof of Theorem \ref{thm:infiniterescalinglimit}]
To give the construction at any degree $d\geq 3$, we first consider a degree $3$ tree mapping scheme.
This construction, which is a modification of Godillon's original construction in \cite{Godillon15}.
\begin{figure}[ht]
    \includegraphics[width=0.4\textwidth]{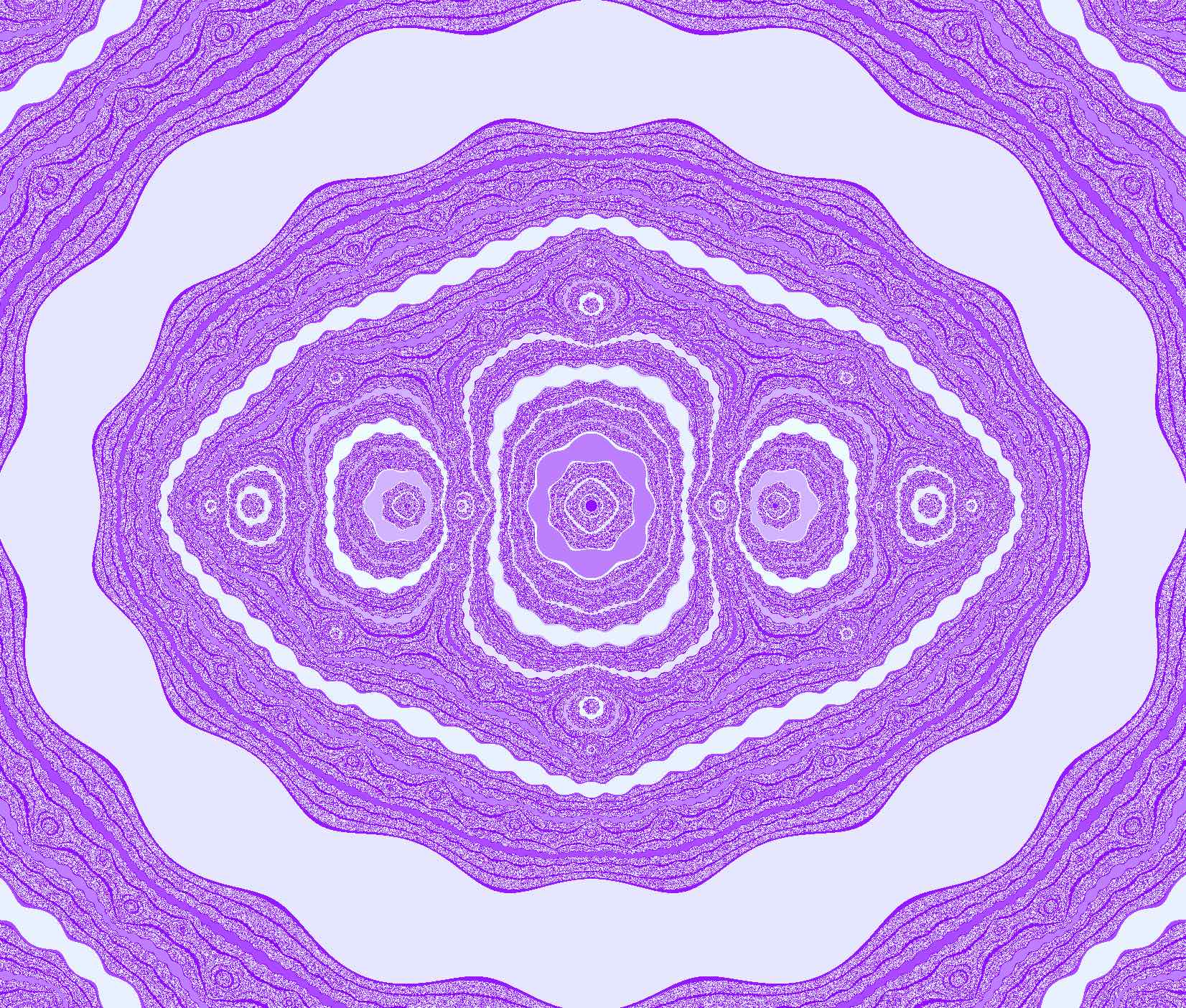}
    \includegraphics[width=0.4\textwidth]{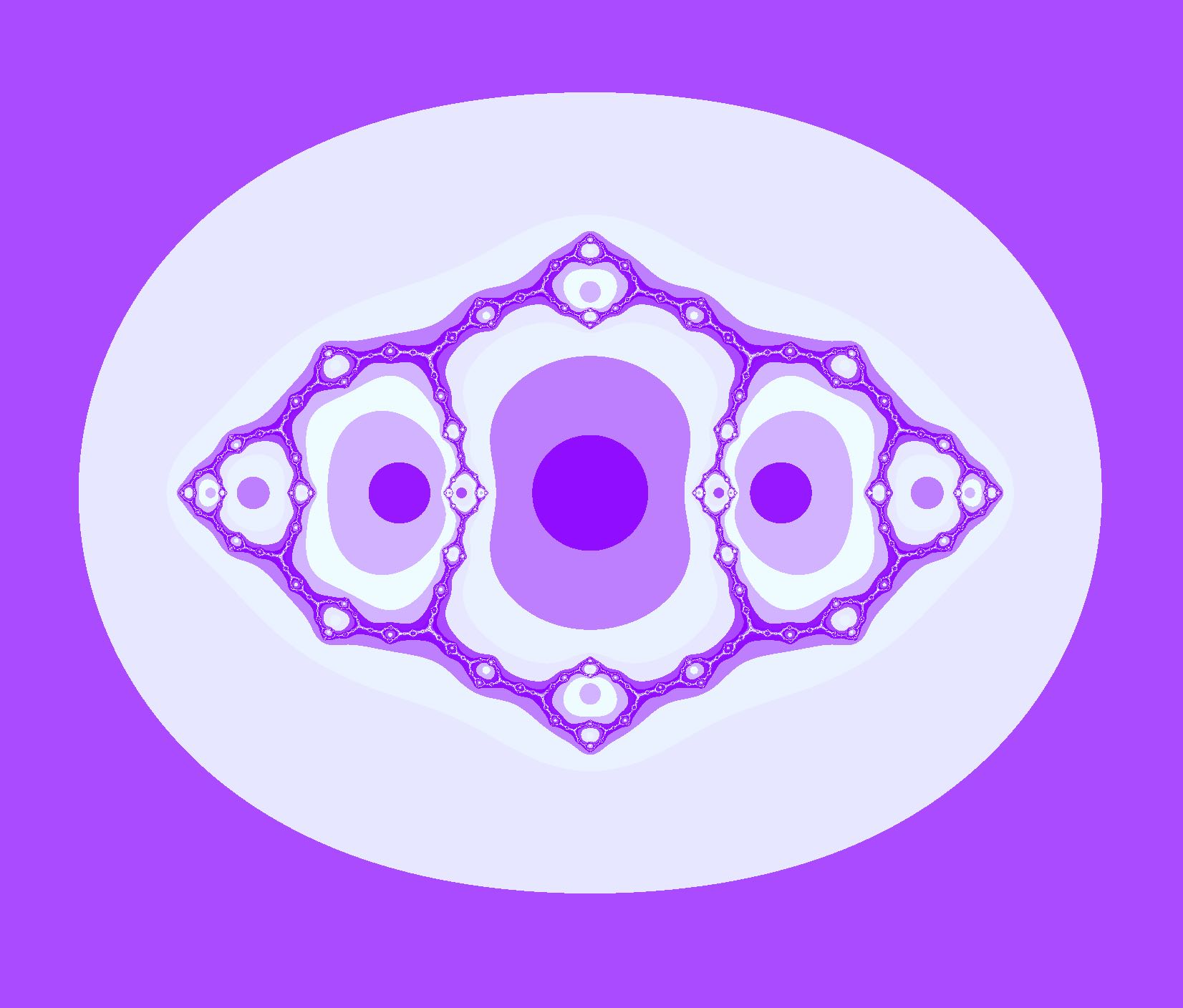}
    \caption{The Julia set of the degree 3 example on the left. The algebraic formula is given by $f_n(z) = \frac{1}{(z-1)^2}+\frac{1}{1-nz}$ for sufficiently large $n$. The Julia set contains a buried component that is the Julia set of $\frac{1}{(z-1)^2}$ (on the right).}
    \label{fig:JuliaSetOfDegree3Example}
\end{figure}

\begin{figure}[ht]
    \centering
    \resizebox{0.8\linewidth}{!}{
    \def\svgwidth{\columnwidth}
    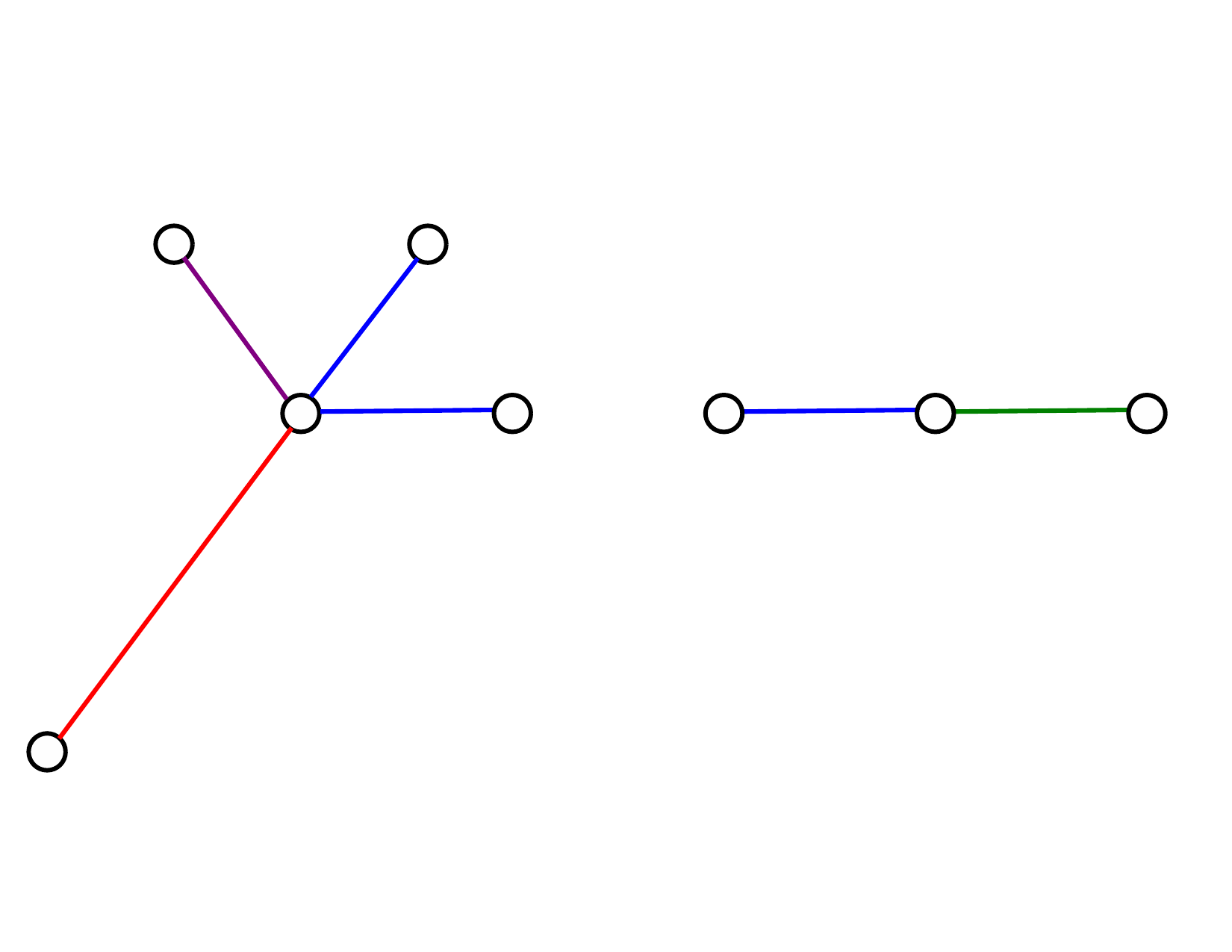

    \def\svgwidth{\columnwidth}
    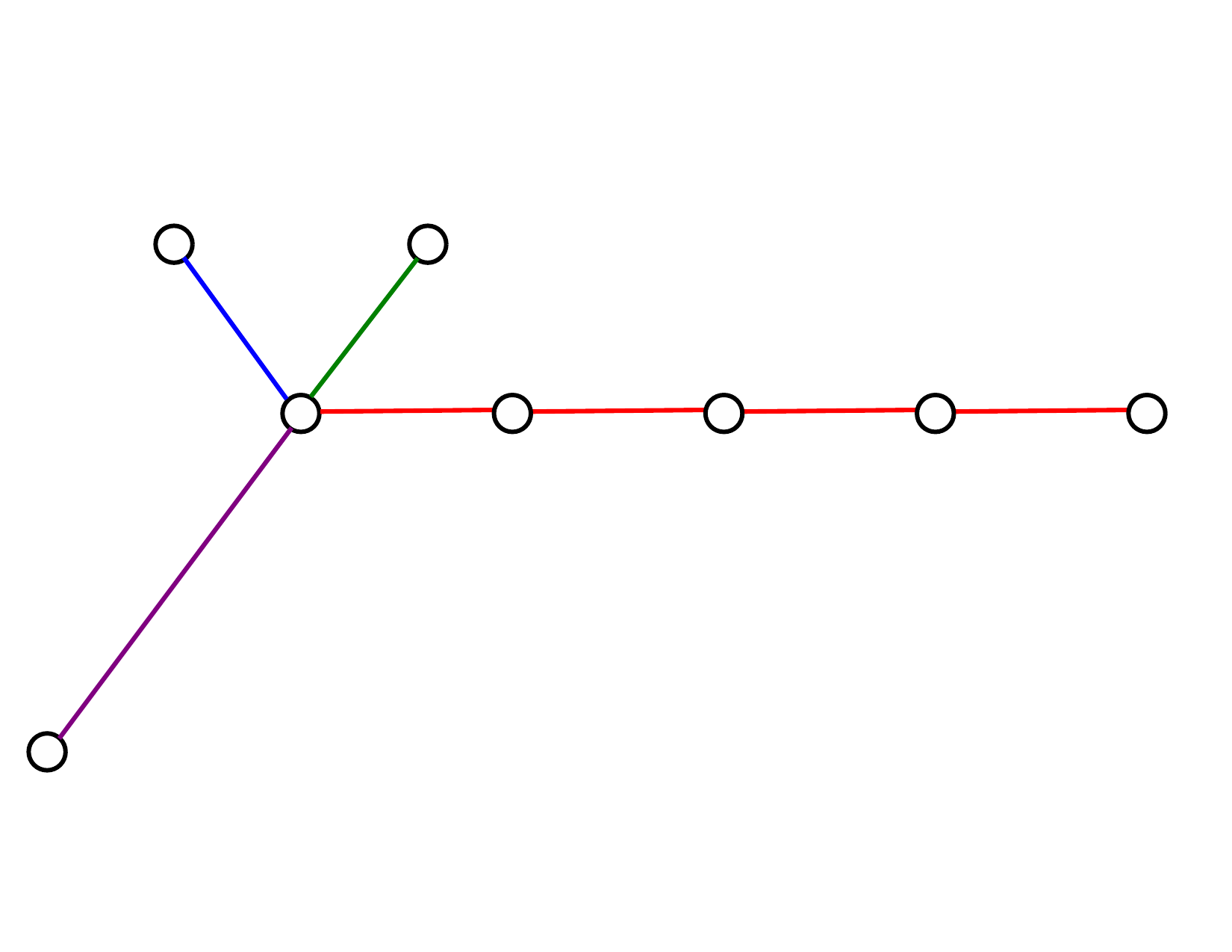

    }
    \caption{The tree map for the degree 3 example.}
    \label{fig:Degree3Example}
\end{figure}

The trees $\mathcal{T}_0 = \mathcal{T}_1-\mathcal{U}$ (on the left) and $\mathcal{T}_1$ (on the right) are shown in Figure \ref{fig:Degree3Example}, where $\mathcal{U}$ is the open interval $(B,B')$.
The vertices $\mathcal{V}_1=\{X_1, X_2, X_3, X_4, B, B', A, A'\}$.
Each edge has length $1$, except for the edge $AX_2$ of length $2$.
The dynamics $F$ is recorded by the color. 
More precisely, $F$ sends $A, A'$ to $A$, $B, B'$ to $X_1$, and $X_i$ to $X_{i+1}$.
$F$ is expanding by a factor of $2$ on $AX_1$ and $AX_2$, and it is local isometry on the rest of the edges.

The Riemann sphere $\hat\C_A$ is marked so that the tangent vectors 
$$
AB, AX_1, AX_2, AX_4
$$ 
correspond to $0, 1, \infty, 2 \in \hat\C_A$ (see Figure \ref{fig:Degree3Example}).
All the other Riemann spheres are marked so that the tangent vector pointing towards (or away from) $A$ corresponds to $\infty$ (or $0$ respectively).
the rational mapping scheme is given as:
\begin{align*}
R_{A\to A}(z) &= \frac{1}{(z-1)^2},\\
R_{A'\to A}(z) &= 1+\frac{1}{1-z},\\
R_{X_1\to X_2}(z) = R_{X_2\to X_3}(z) &= z^2,\\
R_{B\to X_1}(z) = R_{X_3\to X_4}(z) = R_{X_4\to X_1}(z) &= z,\\
R_{B'\to X_1}(z) &= \frac{1}{z}.
\end{align*}

It is easy to verify that the above gives a post-critically finite hyperbolic tree mapping scheme.
Since $F^3$ sends $AB$ and $A'B'$ homeomorphically to $AX_3$, there exist $C\in AB$ and $C'\in A'B'$ so that $F^3: AC \cup C'A' \longrightarrow AA'$ is a tent map with derivative $\pm 4$.
We can perform a surgery on periodic points on $AA'$ in a similar way as in the previous subsection, and a similar diagonal argument gives a sequence $h_n$ of degree $3$ with infinitely many dynamically independent cycles of non-monomial rescaling limits.

To construct the sequence for any degree $d>3$, we can modify the rational mapping scheme at $X_3$ with
$$
R_{X_3\to X_4}(z) = \frac{(\frac{z}{z-1})^k}{(\frac{z}{z-1})^k-1} = \frac{z^k}{z^k-(z-1)^k},
$$
while keeping the others the same.
In this way, we introduce two exposed critical points $0_{X_3}, 1_{X_3}$ with multiplicity $k-1$.
It is easy to verify that the modified tree mapping scheme is again post-critically finite hyperbolic, and the corresponding rational map has degree $2+k$.
Now a similar construction gives a sequence $h_n$ of any degree $d\geq 3$ with infinitely many dynamically independent cycles of non-monomial rescaling limits.
\end{proof}


\end{document}